\documentclass[11pt,a4paper]{amsart}
\usepackage[T1]{fontenc}
\usepackage[utf8]{inputenc}
\usepackage{amsmath,amssymb,amsthm,mathtools}
\usepackage{mathrsfs}
\usepackage[margin=1in]{geometry}
\usepackage[expansion=false]{microtype}
\usepackage{booktabs,array,longtable}
\usepackage[colorlinks=true,linkcolor=blue,citecolor=blue,urlcolor=blue]{hyperref}
\allowdisplaybreaks
\numberwithin{equation}{section}
\newtheorem{theorem}{Theorem}[section]
\newtheorem{lemma}[theorem]{Lemma}
\newtheorem{proposition}[theorem]{Proposition}
\newtheorem{corollary}[theorem]{Corollary}
\theoremstyle{definition}

\newtheorem{conjecture}[theorem]{Conjecture}
\newtheorem*{ac}{Acknowledgements}
\theoremstyle{remark}
\newtheorem{remark}[theorem]{Remark}
\DeclareMathOperator{\Tr}{Tr}
\DeclareMathOperator{\Ent}{Ent}
\DeclareMathOperator{\id}{id}
\DeclareMathOperator{\dom}{dom}

\newcommand{\RR}{\mathbb R}
\newcommand{\CC}{\mathbb C}
\newcommand{\ph}{\varphi}
\newcommand{\Energ}{\mathcal E}
\newcommand{\ip}[2]{\left\langle #1,#2\right\rangle}
\newcommand{\sech}{\operatorname{sech}}

\newcommand{\bE}{\mathbb E}
\newcommand{\cM}{\mathcal M}
\newcommand{\cN}{\mathcal N}
\newcommand{\cL}{\mathcal L}
\newcommand{\cI}{\mathcal I}

\newcommand{\cH}{\mathcal H}
\newcommand{\cF}{\mathcal F}

\newcommand{\cB}{\mathcal B}
\newcommand{\bj}{\mathbf j}
\newcommand{\Id}{\mathrm Id}
\newcommand{\norm}[1]{\left\lVert#1\right\rVert}
\newcommand{\cmlsi}{\mathrm{CMLSI}}
\newcommand{\cfm}{\mathrm{CFM}}

\title[Logarithmic Sobolev Inequalities]{Sharp Logarithmic Sobolev Inequalities for $q$-Araki-Woods Ornstein-Uhlenbeck Semigroups}
\author{Jinsong Wu}
\address{Jinsong Wu, Beijing Institute of Mathematical Sciences and Applications, Beijing, 101408, China}
\email{wjs@bimsa.cn}

\date{}

\begin{document}

\begin{abstract}
In this paper, we establish the sharp $2$-logarithmic Sobolev inequality for $q$-Araki– Woods Ornstein-Uhlenbeck semigroups with $-1<q<1$ and obtain hypercontractivity along the Gross curve with the optimal coefficient, and determine the exact hypercontractive time for $1<p\le2\le r<\infty$.
Reverse hypercontractivity holds in the two separate ranges $0<p<r<1$ and $-\infty<p<r<0$. 
An explicit counterexample rules out the proposed universal reverse curve across zero. 
We also obtain the complete modified logarithmic Sobolev inequality via the intertwining property.
\end{abstract}

\maketitle

\section{Introduction}

Hypercontractivity was first proved by Nelson \cite{Nel73} for classical Ornstein--Uhlenbeck semigroups.
Gross \cite{Gross75} proved a surprising equivalence between the logarithmic Sobolev inequality and hypercontractivity.
The general framework for noncommutative hypercontractivity was obtained by Olkiewicz and Zegarlinski \cite{OZ99}, who established the equivalence between hypercontractivity and logarithmic Sobolev inequalities.
Hypercontractivity and logarithmic Sobolev inequalities have been investigated for quantum Markov semigroups on finite-dimensional von Neumann algebras in \cite{King14, GaoWang26, WanWu26}.

In 1993, Carlen and Lieb \cite{CarLieb93} proved the sharp hypercontractivity for fermionic Ornstein--Uhlenbeck semigroups.
In 1997, Biane \cite{Bia97} proved the sharp hypercontractivity for \(q\)-Gaussian \cite{BS91, BS94} Ornstein--Uhlenbeck semigroups.
In 2011, Lee and Ricard proved hypercontractivity for \(q\)-Araki--Woods Ornstein--Uhlenbeck semigroups and showed that hypercontractivity can occur if and only if the generator of the deformation is bounded.
In 2021, Bikram and Mohanta\cite{BiMo} studied the hypercontractivity for the mixed \(q\)-Araki–Woods Ornstein-Uhlenbeck semigroups.

There are also fruitful results on hypercontractivity for infinite-dimensional von Neumann algebras, such as hypercontractivity for Gaussian quantum Markov semigroups \cite{BRK, CS08, SXZ26, LWW26}, for group von Neumann algebras \cite{JPPP17, XieZhang}, and for quantum groups \cite{Zhang}, among others.

In this paper, we obtain the sharp \(2\)-logarithmic Sobolev inequality for \(q\)-Araki--Woods Ornstein--Uhlenbeck semigroups.
We prove a hypercontractive bound along the Gross curve for every \(1<p<r<\infty\), with optimal coefficient, and determine the exact hypercontractive time when \(1<p\le2\le r<\infty\). 
The corresponding exact-time formula for both exponents strictly below or strictly above \(2\) remains Conjecture~\ref{con:hyper}.
A one-qubit inequality alone does not settle that conjecture.
In \cite{BDR}, Beigi, Datta, and Rouz\'e obtained quantum reverse hypercontractivity for GNS-symmetric quantum Markov semigroups on finite-dimensional von Neumann algebras.
The \(q\)-Araki--Woods algebras are non-tracial in general.
We obtain quantum reverse hypercontractivity for \(q\)-Araki--Woods Ornstein--Uhlenbeck semigroups separately for \(0<p<r<1\) and \(-\infty<p<r<0\).
In \cite{WirZhang}, Wirth and Zhang proves complete modified logarithmic Sobolev inequalities for mixed \(q\)-Gaussian Ornstein–Uhlenbeck semigroups.
Using the intertwining techniques in \cite{Wir25}, we obtain complete modified logarithmic Sobolev inequalities for \(q\)-Araki--Woods Ornstein--Uhlenbeck semigroups.

The paper is organized as follows.
In Section 2, we recall noncommutative $L^p$ spaces and $q$-Araki-Woods Ornstein-Uhlenbeck semigroups. 
In Section 3, we prove the sharp $2$-logarithmic Sobolev inequalities and hypercontractivity for $q$-Araki-Woods Ornstein-Uhlenbeck semigroups.
In Section 4, we obtain the complete modified logarithmic Sobolev inequality for $q$-Araki-Woods Ornstein-Uhlenbeck semigroups.
In Section 5, we obtain the reverse hypercontractivity for $q$-Araki-Woods Ornstein-Uhlenbeck semigroups.

\begin{ac}
Jinsong Wu was supported by grants from Beijing Institute of Mathematical Sciences and Applications.
Jinsong Wu was supported by NSFC (Grant no. 12371124). 
\end{ac}

\section{Preliminaries}

In this section, we shall briefly recall the noncommutative $L^p$ spaces, $q$-Araki-Woods algebra and its Ornstein-Uhlenbeck semigroup.

\subsection{$L^p$ spaces and quantum Markov semigroups}
Suppose that $\cM$ is a matrix algebra with a faithful normal state $\varphi$ and $\Tr$ is an ordinary unnormalized trace on $\cM$.
We denote by $D_\varphi$ the density operator associated to $\varphi$.
For each $X\in \cM$ and $a\in \RR$, we define
\[
 \bj_\varphi^a(X)=D_\varphi^{a/2} X D_\varphi^{a/2}.
\]
Then the norm $\norm{\cdot}_{p, \varphi}$ is given by $\|X\|_{p,\varphi}=\left\|\bj_\varphi^{1/p}(X)\right\|_p$.
Differentiating the weights gives
\[
 \frac{d}{dp} \bj_\varphi^{1/p}(X)
 =-\frac{1}{2p^2}\big((\log D_\varphi)\bj_\varphi^{1/p}(X)+\bj_\varphi^{1/p}(X)(\log D_\varphi)\big).
\]
The trace differentiation rule for a positive matrix then implies
\[
 \frac{d}{dp}\left\|\bj_\varphi^{1/p}(X)\right\|_p^p
 =\Tr\left(\bj_\varphi^{1/p}(X)^p\log \bj_\varphi^{1/p}(X)\right)-\frac1p\Tr\left(\bj_\varphi^{1/p}(X)^p\log D_\varphi\right).
\]
The entropy is 
\begin{align}\label{qubit:entropy}
 \Ent_{p,\varphi}(X)=\Tr\!\left(\bj_\varphi^{1/p}(X)^p(\log \bj_\varphi^{1/p}(X)^p-\log D_\varphi)\right)-\left\|\bj_\varphi^{1/p}(X)\right\|_p^p\log \left\|\bj_\varphi^{1/p}(X)\right\|_p^p.
\end{align}
By assuming that $\left\|\bj_\varphi^{1/p}(X)\right\|_p=1$, we have that 
\begin{equation}\label{qubit:rho}
 \Ent_{p,\varphi}(X)=D(\bj_\varphi^{1/p}(X)^p\Vert D_\varphi).
\end{equation}
where $D(D_1\|D_2)=\Tr(D_1(\log D_1-\log D_2))$ is the relative entropy.
By differentiating the logarithm of the norm, we obtain that 
\begin{equation}\label{qubit:p-norm-derivative}
 \frac{d}{dp}\log\|X\|_{p,\varphi} =
 \frac{\Ent_{p,\varphi}(X)}{p^2\|X\|_{p,\varphi}^p}.
\end{equation}
For \(X>0\), set
\[
 I_{p,2}(X):=D_\varphi^{-1/(2p)}(D_\varphi^{1/4} X D_\varphi^{1/4})^{2/p}D_\varphi^{-1/(2p)}=\bj_{\varphi}^{-1/p}(\bj_\varphi^{1/2}(X)^{2/p}).
\]
The entropy is preserved by this power map,
\[
 \Ent_{p,\varphi}(I_{p,2}(X))=\Ent_{2,\varphi}(X).
\]


A quantum Markov semigroup $\{\Phi_t\}_{t\geq 0}$ on a von Neumann algebra $\cM$ is a weak-$^*$ continuous semigroup of unital normal completely positive maps on $\cM$.
Let $\cL$ be its generator and $\Phi_t=e^{-t\cL}$.
Suppose that $\{\Phi_t\}_{t\geq 0}$ is a quantum Markov semigroup on a finite dimensional von Neumann algebra with an invariant state $\varphi$.
The Dirichlet form is 
\begin{align}\label{qubit:energy}
 \mathcal E_{p,\varphi}(X)
 &=
 \frac{pp'}{4}
 \Tr\!\left((\bj_\varphi^{1/p}(X))^{p-1}\bj_\varphi^{1/p}(\cL(X))\right).
\end{align}
Thus a \(p\)-logarithmic Sobolev constant \(\alpha_p\) is the optimal constant in the following inequality:
\begin{equation}
\label{qubit:lsi-convention}
 \alpha_p\Ent_{p,\varphi}(X)\leq \mathcal E_{p,\varphi}(X)
 \qquad (X>0).
\end{equation}
These conventions agree with the Dirichlet-form normalization used in \cite{BDR}.
Both sides of \eqref{qubit:lsi-convention} are homogeneous of degree \(p\) in \(X\). 
Moreover, we have that 
\begin{equation}\label{qubit:time-norm-derivative}
 \frac{d}{dt}\log\|\Phi_t(X)\|_{p,\varphi}
 =
 -\frac{4(p-1)}{p^2\|\Phi_t(X)\|_{p,\varphi}^p}
       \mathcal E_{p,\varphi}(\Phi_t(X)).
\end{equation}

If $\cM$ is a von Neumann algebra with a faithful normal state $\varphi$, we shall use the Haagerup $L^p$ space $L^p(\cM)$ for $0< p <\infty$ and Haagerup reduction for this space. 
For a faithful normal state $\varphi$ on $\cM$, let $D_\varphi$ be its Haagerup density, i.e. $tr(D_\varphi)=\varphi(1)$, where $tr$ is the $tr$-functional on $L^1(\cM)$.
Denote by $\Delta_\varphi$ the modular operator associated to $\varphi$ and $\sigma_t^{\varphi}$ the modular automorphism and $\Omega_\varphi$ the separating and cyclic vector.
See \cite{HJX} for more details on this material.
We shall use similar notations as in the matrix-algebra case:
\begin{equation}\label{main:norm}
 \bj_{\varphi}^{1/p}(x)=D_\varphi^{1/(2p)}xD_\varphi^{1/(2p)},\qquad
 \norm x_{p,\varphi}=\norm{\bj_{\varphi}^{1/p}(x)}_{L^p(\cM)}.
\end{equation}
This is a norm for $p\ge1$ and a quasi-norm for $0<p<1$, which is called symmetric (quasi-) norm of $x$ in the following.
The notation $\norm{\Phi_t}_{p\to r}\le1$ in this paper means $\norm{\Phi_t (x)}_{r,\varphi}\le\norm x_{p,\varphi}$ on bounded inputs $x\in \cM$.

An element $x\in \cM$ is \emph{entire modular-analytic} if
$t\mapsto\sigma_t^\varphi(x)$ extends to an entire operator-norm
analytic function on $\mathbb C$.
It has \emph{compact modular spectral support} if for some $R<\infty$,
\[
 \mathbf1_{[-R,R]}(\log\Delta_\varphi)x\Omega_\varphi=x\Omega_\varphi.
\]
This is equivalent to compact Arveson spectrum for $\sigma^\varphi$:
the identity $\sigma_t^\varphi(x)\Omega_\varphi
=\Delta_\varphi^{it}x\Omega_\varphi$ and the separating property of
$\Omega_\varphi$ identify their annihilating convolution functions.
Such an element is entire modular-analytic. Its symmetric vector
$\bj_{\varphi}^{1/2}(x)=\Delta_\varphi^{1/4}x\Omega_\varphi$ has the
same compact spectral support.

For $x\ge0$ and $x\neq 0$, the element
$\displaystyle D_{\rho_x}=\frac{1}{\norm x_{2,\varphi}^2}(D_\varphi^{1/4}xD_\varphi^{1/4})^2$ in $L^1(\cM)$, defines a normal state $\rho_x$. 
We use
\begin{equation}\label{eq:generalent}
 \Ent_{2,\varphi}(x)=\norm x_{2,\varphi}^2 D(\rho_x\Vert\varphi),
 \qquad \mathcal E_{2,\varphi}(x)=\norm{\cL^{1/2}\bj_{\varphi}^{1/2}(x)}_2^2.
\end{equation}
Relative entropy is Araki relative entropy and may be infinite.
The energy is defined when $\bj_{\varphi}^{1/2}(x)\in\dom \cL^{1/2}$;
otherwise it is $+\infty$. Set $\Ent_{2,\varphi}(0)=0$.
The second expression uses the $L^2$ implementation of $\cL$ for KMS-symmetric quantum Markov semigroup $\{\Phi_t\}_{t\geq 0}$. 
We define the norm $\norm{\cdot}_{\cL}$ as follows
\begin{align*}
    \norm{y}_{\cL}=\sqrt{\|y\|_{2}^2 +\norm{\cL^{1/2}(y)}_2^2}, \quad y\in \dom(\cL^{1/2}).
\end{align*}

Let $D_{\alpha,z}(\cdot \|\cdot)$ denote the $\alpha$-$z$ R\'enyi divergence of normal states, defined using Haagerup powers and their factorization, as in~\cite{HJ}. 
With $p\neq 2$, $\alpha=p/2$ and $z=p$, its defining expression gives the exact identity
\begin{equation}\label{eq:renyiidentity}
 \log\norm x_{p,\varphi}-\log\norm x_{2,\varphi}
 =\frac{p-2}{2p}D_{p/2,p}(\rho_x\Vert\varphi).
\end{equation}
For $2<p<3$, monotonicity in the second parameter gives
\[
 D_{p/2,3}(\rho_x\Vert\varphi)
 \le D_{p/2,p}(\rho_x\Vert\varphi)
 \le D_{p/2,2}(\rho_x\Vert\varphi).
\]
The following bound supplies the finiteness near $\alpha=1$. 
In fact,
\[
 Q_{2,2}(\rho_x\Vert\varphi)
 =\|x\|_{2, \varphi}^{-4}\|xD_\varphi^{1/2}x\|_2^2
 \le \|x\|_{2, \varphi}^{-4}\|x\|^4<\infty.
\]
Here $Q_{\alpha,z}=\exp((\alpha-1)D_{\alpha,z})$ for states.
\cite[Theorem 6.6]{HJ} implies the finiteness of the R\'enyi divergence.
Monotonicity \cite[Theorem 5.7]{HJ} in $z$ also gives $D_{2,3}(\rho_x\Vert\varphi)\le D_{2,2}(\rho_x\Vert\varphi) <\infty$.
The fixed-$z$ limit theorem~\cite[Theorem 6.9]{HJ} shows that both outer quantities converge to $D(\rho_x\Vert\varphi)$. 
It follows that
\begin{equation}\label{eq:typeIIIderivative}
 \left.\frac d{dp}\right|_{2^+}\log\norm x_{p,\varphi}
 =\frac14D(\rho_x\Vert\varphi)
 =\frac{\Ent_{2,\varphi}(x)}{4\norm x_{2,\varphi}^2}.
\end{equation}

\subsection{$q$-Araki-Woods Ornstein-Uhlenbeck semigroups}
Suppose that $\cH_{\RR}$ is a separable real Hilbert space and $\{U_t\}_{t\in \RR}$ is a strongly continuous one-parameter group of orthogonal transformations on $\cH_{\RR}$.
Let $\cH_{\CC}=\cH_{\RR}+i\cH_{\RR}$ be the complexification of $\cH_{\RR}$.
The group $\{U_t\}_{t\in \RR}$ extends to a group of unitary operators on $\cH_{\CC}$ and there exists a self-adjoint strictly positive operator $A$ such that $U_t=A^{it}$ and $J_{c} AJ_{c} =A^{-1}$, where $J_{c}$ is the real conjugation on $\cH_{\CC}$.
The operator $A$ is almost-periodic if it has an orthonormal basis of eigenvectors.
The inner product $\langle \cdot, \cdot\rangle_U$ with respect to $\{U_t\}_{t\in \RR}$ is defined as 
\begin{align*}
    \langle \xi, \eta\rangle_U=\langle 2A(1+A)^{-1} \xi, \eta\rangle_{\cH_{\CC}}, \quad \xi, \eta\in \cH_{\CC}.
\end{align*}
The Hilbert space completion with respect to the inner product is denoted by $\cH$.

The $q$-inner product $\langle \cdot, \cdot\rangle_q$ on the space $\cH^{\otimes n}$ is defined as follows
\begin{align*}
    \langle \xi_1\otimes \cdots \otimes \xi_n, \eta_1\otimes \cdots \otimes \eta_n\rangle_q
    = \sum_{\pi\in \Pi_n} q^{i(\pi)}\langle \xi_1, \eta_{\pi(1)}\rangle \cdots \langle \xi_n, \eta_{\pi(n)}\rangle,
\end{align*}
where $\Pi_n$ is the permutation group on $\{1, \ldots, n\}$, and
\begin{align*}
    i(\pi)=\#\{ (i,j)| 1\leq i<j \leq n, \, \pi(i) > \pi(j)\}
\end{align*}
is the number of inversions of $\pi\in \Pi_n$.
The Hilbert space with the $q$-inner product is denoted by $\cH^{\otimes_q n}$.
The $q$-Fock space $\cF_q(\cH)$ is
\begin{align*}
    \cF_q(\cH)=\CC\Omega \oplus \bigoplus_{n \geq 1} \cH^{\otimes _q n},
\end{align*}
where $\Omega$ is the vacuum vector.

For each $\xi\in \cH_{\RR}$, we define a generalized $q$-semi-circular random variable on $\cF_q(\cH)$ by
\begin{align*}
    s_q(\xi)=\ell_q(\xi)+ \ell^*_q(\xi),
\end{align*}
where $\ell_q(\xi)$ is the left creation operator by $\xi$ and $\ell_q^*(\xi)$ is its adjoint.
The $q$-Araki-Woods algebra $\Gamma_q(\cH_{\RR}, \{U_t\}_{t\in \RR})(=: \Gamma_q)$ is $\{s_q(\xi): \xi\in \cH_{\RR}\}''$.
The vacuum state $\varphi(\cdot)=\langle \cdot \Omega, \Omega\rangle_q$ is a normal faithful state on the $q$-Araki-Woods algebra $\Gamma_q$.
For $\xi_1, \ldots,\xi_m\in \cH_{\RR}$, we denote by $W(\xi_1\otimes \cdots \otimes \xi_m)$ the Wick operator in $\Gamma_q$.

The $q$-Ornstein-Uhlenbeck semigroup \cite{BKS97, Hia03} $\{\Phi_t\}_{t\geq 0}$ on $\Gamma_q$ is defined as
\begin{align*}
    \Phi_t(x)\Omega=\cF(e^{-t}\Id_{\cH})x\Omega,
\end{align*}
where $\cF(e^{-t}\Id_{\cH})$ on $\cF_q(\cH)$ is defined by $\cF(e^{-t}\Id_{\cH})|_{\cH^{\otimes n}}=e^{-nt} \Id_{\cH^{\otimes n}}$.

Suppose that $A$ is almost periodic and $\{\mu_j^4\}_{j \geq 1}\subseteq [1, \infty)$ is the sequence of representatives of eigenvalues $\{\mu_j^4, \mu_j^{-4}\}$ of $A$.
Suppose that $\{ e_{\pm k}\}_{k \geq 1}$ is an orthonormal basis of $\cH$ and $g_{q, i}=\mu_i^{-1}\ell_q(e_i)+\mu_i \ell_q^*(e_{-i})$ and $\sigma_t^{\varphi}(g_{q, i})=\mu_i^{4it}g_{q,i}$.
Denote by $\Gamma_q^{\mu}$ the von Neumann algebra generated by $\{g_{q, k}\}_{k \geq 1}$ and by $\varphi$ its vacuum state again.
Then there exists a spatial isomorphism between $(\Gamma_q, \varphi, \Omega)$ and $(\Gamma_q^{\mu}, \varphi, \Omega)$.
The hypercontractivity is stable under taking ultraproducts.
We shall only focus on the almost periodic case due to the discretization procedure in Section 6.1 of \cite{Nou06}.

Suppose that $\displaystyle x=\sum_{k=0}^\infty x_k$ is the orthogonal chaos decomposition, where $\cL(x_k)=k x_k$.
Then $\displaystyle \|\bj_\varphi^{1/2}(x)\|_{\cL}^2=\sum_{k=0}^\infty(1+k)\|x_k\|_{2,\varphi}^2$.

Let \(\beta \ge0\) and define
\begin{equation}\label{main:B}
 B_p(\beta)
 =\frac{\sinh(\beta/(2p'))}{\sinh(\beta/(2p))},
 \qquad B_p(0):=p-1.
\end{equation}
Thus $B_2=1$ and $B_{p'}=B_p^{-1}$.
Throughout, we set $\beta=\log \|A\|$.

\begin{lemma}\label{lem:monotone}
For $1<p\le r<\infty$, $B_r(h)/B_p(h)$ is nondecreasing for $h\ge0$. 
For $r\ge2$, $B_r$ itself is nondecreasing.
\end{lemma}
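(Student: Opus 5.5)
The plan is to take logarithms and differentiate in $h$, reducing both claims to the monotonicity of the single function $u\mapsto u\coth u$. Put $a=1/p$, so that $1/p'=1-a$ with $a\in(0,1)$, and let $x=h/2$. Then for $h>0$
\[
 B_p(h)=\frac{\sinh((1-a)x)}{\sinh(ax)}>0 .
\]
As $h\to0^+$ this tends to $(1-a)/a=p-1$. So the convention $B_p(0)=p-1$ makes $B_p$ continuous on $[0,\infty)$. It therefore suffices to prove both monotonicity statements on $(0,\infty)$; they then extend to $h=0$ by continuity.

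First I compute the logarithmic derivative. For $c>0$ and $x>0$ set $g_x(c)=c\coth(cx)$, so that $\frac{d}{dx}\log\sinh(cx)=g_x(c)$. Then
\[
 \frac{d}{dx}\log B_p = g_x(1-a)-g_x(a)=:F(a,x).
\]
The key fact is that $g_x(c)=\frac1x\,(cx)\coth(cx)$, and $u\mapsto u\coth u$ is increasing on $(0,\infty)$. To see this, note that its derivative $\coth u-u\,\mathrm{csch}^2u$ equals $(\sinh u\cosh u-u)/\sinh^2u=(\tfrac12\sinh 2u-u)/\sinh^2 u$, and this is positive because $\sinh v>v$ for $v>0$. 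Hence $g_x$ is increasing in $c$ for each fixed $x>0$.

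For the first claim, let $b=1/r$, so that $0<b\le a<1$. Then $\frac{d}{dx}\log(B_r/B_p)=F(b,x)-F(a,x)$. Since $g_x$ is increasing, the map $c\mapsto F(c,x)=g_x(1-c)-g_x(c)$ is nonincreasing on $(0,1)$: the first term decreases and the second increases as $c$ grows. Because $b\le a$, this gives $F(b,x)\ge F(a,x)$. So $\log(B_r/B_p)$ is nondecreasing in $x$, hence in $h$.

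For the second claim, $r\ge2$ means $b\le 1/2$, so $1-b\ge b$. Monotonicity of $g_x$ then gives $F(b,x)=g_x(1-b)-g_x(b)\ge0$, so $\log B_r$ is nondecreasing. Alternatively, this follows from the first claim with $p=2$, since $B_2\equiv1$.

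The only genuine analytic input is the monotonicity of $u\coth u$, and that is elementary. I therefore expect no serious obstacle beyond the care at $h=0$, which is handled by the continuity remark above.
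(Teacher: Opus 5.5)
Your proposal is correct and follows essentially the same route as the paper: you take the logarithmic derivative in $h$, reduce both claims to the monotonicity of $u\mapsto u\coth(uh)$ in $u$, obtain the second claim by setting $p=2$ (where $B_2\equiv 1$), and handle $h=0$ by continuity. Your additional direct argument for the second claim via $1-1/r\ge 1/r$ is a harmless variant of the same idea.
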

\begin{proof}
For $h>0$, set $F_h(u)=u\coth(uh)$, with its continuous value at $u=0$ and $\displaystyle F_h(0)=\frac{1}{h}$. 
Its derivative with respect to $u$ is
\[
 \coth(uh)-uh\,\operatorname{csch}^2(uh)
 =\frac{\sinh(uh)\cosh(uh)-uh}{\sinh^2(uh)}\ge0.
\]
We have that 
\begin{align*}
 2\frac{d}{d h}\log\frac{B_r(h)}{B_p(h)}
 &=F_{h/2}(1-1/r)-F_{h/2}(1-1/p)\\
 &\quad+F_{h/2}(1/p)-F_{h/2}(1/r)\ge0.
\end{align*}
Take $p=2$ for the last assertion and use continuity at zero.
\end{proof}


\begin{lemma}
\label{qubit:J-convexity}
Suppose that \(0<a<1\).
Let
\[
 f(u)=\frac{\sinh((1-a)u)}{\sinh(au)},\qquad
 J_{f}(u)=(\sinh u)\,f(u)-(\cosh u)\,f'(u).
\]
The function \(J_{f}\) extends smoothly and oddly through \(u=0\), vanishes there, and is convex on \(u>0\).
\end{lemma}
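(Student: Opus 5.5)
The plan is to first dispose of regularity and parity, then reduce convexity to a monotonicity statement about $f-f''$, and finally prove that monotonicity by expanding $f$ in exponentials.

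\emph{Regularity and parity.} Near $u=0$ the numerator $\sinh((1-a)u)$ and the denominator $\sinh(au)$ both vanish to first order, and $\sinh(au)$ has no other zeros on $\RR$. Hence $f$ is real-analytic on $\RR$, with $f(0)=(1-a)/a$. As a quotient of two odd functions, $f$ is even, so $f'$ is odd. Then $(\sinh u)f(u)$ and $(\cosh u)f'(u)$ are both odd and smooth. Therefore $J_f$ is smooth and odd, and $J_f(0)=0$ follows automatically.

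\emph{Reduction of convexity.} Differentiating gives the telescoping identity
\[
 J_f'(u)=\cosh u\, f+\sinh u\, f'-\sinh u\, f'-\cosh u\, f''=\cosh u\,\bigl(f(u)-f''(u)\bigr).
\]
So convexity of $J_f$ on $u>0$ is equivalent to $K(u):=\cosh u\,(f-f'')$ being nondecreasing on $(0,\infty)$. Equivalently, it suffices to show
\[
 J_f''(u)=\sinh u\,(f-f'')+\cosh u\,(f'-f''')\ge0 .
\]

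\emph{Exponential expansion.} For $u>0$ I would use $1/\sinh(au)=2\sum_{n\ge0}e^{-(2n+1)au}$, which gives
\[
 f(u)=\sum_{n\ge0}\bigl(e^{c_nu}-e^{-d_nu}\bigr),\qquad c_n=1-2(n+1)a,\quad d_n=1+2na\ge1 .
\]
The series converges with all derivatives locally uniformly on $(0,\infty)$. Substituting,
\[
 J_f''(u)=\sum_{n\ge0}\Bigl[(1-c_n^2)e^{c_nu}\bigl(\sinh u+c_n\cosh u\bigr)-(1-d_n^2)e^{-d_nu}\bigl(\sinh u-d_n\cosh u\bigr)\Bigr].
\]
The $d_n$-terms are $(d_n^2-1)e^{-d_nu}(\sinh u-d_n\cosh u)\le0$, since $d_n\ge1$. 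The $c_n$-terms are nonnegative when $c_n\ge0$ and, for $-1<c_n<0$, become nonnegative once $\tanh u\ge|c_n|$. So termwise positivity fails, and the terms must be grouped.

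\emph{Main obstacle.} The main difficulty is organising the cancellation between the exponentials $e^{c_nu}$ and $e^{-d_nu}$. My first attempt would be to pair terms so that the combined exponents differ by exactly $2$ and match the factors $e^{\pm u}$ produced by $\sinh$ and $\cosh$. Writing $\sinh u\pm c\cosh u=\tfrac12\bigl((1\pm c)e^{u}-(1\mp c)e^{-u}\bigr)$, each summand becomes a combination of the four exponentials $e^{(c_n\pm1)u}$ and $e^{(-d_n\pm1)u}$. I would then collect coefficients of each distinct exponential $e^{\lambda u}$, summed over $n$, and show that the collected expression is a nonnegative combination of the form $e^{\lambda u}-e^{\lambda' u}$ with $\lambda>\lambda'$. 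Each such difference is positive for $u>0$.

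As a check and as a fallback, I would verify the boundary behaviour directly. By oddness $J_f''(0)=0$, and $J_f'''(0)=f(0)-2f''(0)+f''''(0)$ is a rational function of $a$ whose sign I would confirm. For large $u$, $J_f''$ is dominated by its leading exponential with a positive coefficient. If the grouping argument resists, I would try writing $J_f''(u)\sinh^3(au)$ as a finite combination of exponentials in $u$. Clearing the denominators of $f$ and its derivatives makes this a finite exponential polynomial, whose nonnegativity on $u>0$ could then be proved by grouping terms into positive differences $e^{\lambda u}-e^{\lambda'u}$ with $\lambda>\lambda'$.
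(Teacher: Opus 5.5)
There is a genuine gap: the proposal never proves the central inequality $J_f''\ge0$, and both grouping strategies you propose for it fail in general. The regularity and parity argument is fine. So are the identity $J_f'=\cosh u\,(f-f'')$ and the expansion $f=\sum_{n\ge0}(e^{c_nu}-e^{-d_nu})$.

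\textbf{The series grouping fails.} Collecting your series (the exponents $c_n-1=-2(n+1)a$ and $1-d_n=-2na$ merge) gives, for $u>0$,
\[
 J_f''(u)=4a\sum_{m\ge1}\Bigl[m(1-ma)^2e^{(2-2ma)u}-2am^2e^{-2mau}-m(1+ma)^2e^{-(2+2ma)u}\Bigr].
\]
Suppose this were a nonnegative combination of differences $e^{\lambda u}-e^{\lambda'u}$ with $\lambda>\lambda'$. By uniqueness of exponential coefficients, every partial sum of coefficients, taken in order of decreasing exponent, would then be nonnegative. Take $a=0.3$. The exponents $1.4,0.8,0.2,-0.4,-0.6,-1.0,-1.2$ carry coefficients, divided by $4a$, of $0.49,0.32,0.03,0.16,-0.6,1.25,-2.4$. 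Their partial sum is $-0.75<0$. So no such grouping exists, even though $J_f''>0$ there.

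\textbf{The finite fallback fails the same way.} The correct clearing factor is $\sinh^4(au)$, not $\sinh^3(au)$, because $f'''$ carries a non-cancelling factor $\sinh^{-4}(au)$. With that factor one gets $2\sinh^4(au)J_f''(u)=a\,g_a(2u)$, with $g_a$ as in the paper. The two top coefficients of $g_a$ are $(1-a)^2/2$ at $e^{(1+a)v}$ and $-(1-2a^2)$ at $e^{v}$. Their sum is $(5a^2-2a-1)/2$, which is negative for every $a<(1+\sqrt6)/5$. This range includes the values $a=(1\pm k)/2$ where the lemma is used.

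\textbf{Boundary checks are not enough.} The checks at $0$ and $\infty$ cannot replace a global argument. Also, $J_f'''(0)=f(0)-f^{(4)}(0)$, not $f(0)-2f''(0)+f^{(4)}(0)$.

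\textbf{The missing idea.} You need a positivity criterion that does not rely on coefficient dominance. The relevant property is that all odd moments $\sum_k\alpha_k\lambda_k^{2n+1}$ are nonnegative. The paper shows $g_a$ is odd with nonnegative Taylor coefficients. The coefficient of $v^{2n+1}/(2n+1)!$ is
\[
 C_n=2\bigl((1-x)^2(S_n(x)-1)+x^2(1-x^{n-1})\bigr)\ge0,
\]
where $x=a^2$ and $S_n(x)=\sum_{j=0}^{n-1}\binom{2n-1}{2j}x^j\ge1$. A convenient way to reach $g_a$ is the closed form
\[
 J_f(u)=\frac{a\sinh 2u-\sinh 2au}{2\sinh^2 au}.
\]
It follows from expanding $\sinh u$ and $\cosh u$ via $u=au+(1-a)u$. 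Differentiating this twice gives $a\,g_a(2u)/(2\sinh^4 au)$ directly.
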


\begin{proof}
Note that the function \(f\) is even and analytic at zero, with \(f(0)=(1-a)/a\).  
Hence \(J_{f}\) is odd and vanishes at zero.
Direct differentiation gives
\begin{equation}
\label{qubit:J-second-derivative}
 J_{f}''(u)=\frac{a g_a(2u)}{2\sinh^4(au)},
\end{equation}
where
\begin{align}
\label{qubit:L-polynomial}
 g_a(v)={}&(1-a)^2\sinh((1+a)v)
          +(1+a)^2\sinh((1-a)v)\\
          &-2(1-2a^2)\sinh v-2a\sinh(av).
\nonumber
\end{align}

We shall show that \(g_a(v)\geq0\) for \(v\geq0\) via the Taylor expansion.
Its coefficient of \(v\) is zero.  For \(n\geq1\), the coefficient of
\(v^{2n+1}/(2n+1)!\) equals
\begin{align*}
 C_n={}&(1-a)^2(1+a)^{2n+1}
       +(1+a)^2(1-a)^{2n+1}-2(1-2a^2)-2a^{2n+2}.
\end{align*}
Let \(x=a^2\) and
\[
 S_n(x)=\sum_{j=0}^{n-1}\binom{2n-1}{2j}x^j.
\]
Then we see that 
\begin{align*}
 C_n
 &=(1-a^2)^2\big((1+a)^{2n-1}+(1-a)^{2n-1}\big)
        -2+4a^2-2a^{2n+2}\\
 &=2(1-x)^2S_n(x)-2+4x-2x^{n+1}\\
 &=2\big((1-x)^2(S_n(x)-1)+x^2(1-x^{n-1})\big)\geq0.
\end{align*}
Indeed \(0<x<1\) and \(S_n(x)\geq1\).  
Thus all Taylor coefficients are nonnegative, and \eqref{qubit:J-second-derivative} proves convexity.  
In particular, \(C_1=0\); the first nonzero possible term of \(g_a\) has degree five, consistent with the smooth behavior at zero in \eqref{qubit:J-second-derivative}.
\end{proof}

\begin{lemma}\label{hc:positive-extension}
Let $\Psi:(\cM, \varphi) \to (\cN, \psi)$ be a completely positive map, and
$1\leq p,r<\infty$, where $\varphi$, $\psi$ are faithful normal states on the matrix algebras $\cM, \cN$ respectively.  
Suppose for every $X\geq0$,
\[
 \|\Psi(X)\|_{r, \psi}\leq C\|X\|_{p, \varphi}.
\]
Then the same inequality holds for every $X\in \cM$.
\end{lemma}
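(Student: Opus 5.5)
The plan is to strip off the density weights so that the statement becomes one about ordinary Schatten norms, and then to use a $2\times 2$ block-matrix argument that needs only $2$-positivity of $\Psi$. Define the conjugated map
\[
 \widetilde\Psi(Y)=D_\psi^{1/(2r)}\,\Psi\big(D_\varphi^{-1/(2p)}YD_\varphi^{-1/(2p)}\big)\,D_\psi^{1/(2r)},\qquad Y\in\cM .
\]
This map is completely positive, being a composition of $\Psi$ with two congruences by positive invertible matrices. Put $Y=\bj_\varphi^{1/p}(X)$. Then $\|X\|_{p,\varphi}=\|Y\|_p$ and $\|\Psi(X)\|_{r,\psi}=\|\widetilde\Psi(Y)\|_r$. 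Since $X\ge0$ if and only if $Y\ge0$, the hypothesis becomes $\|\widetilde\Psi(Y)\|_r\le C\|Y\|_p$ for all $Y\ge0$, and it suffices to extend this to arbitrary $Y$.

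Fix an arbitrary $Y$ with polar decomposition $Y=U|Y|$, so that $|Y^*|=U|Y|U^*$. The block matrix
\[
 M=\begin{pmatrix}|Y^*| & Y\\ Y^* & |Y|\end{pmatrix}
 =\begin{pmatrix}U|Y|^{1/2}\\ |Y|^{1/2}\end{pmatrix}\begin{pmatrix}|Y|^{1/2}U^* & |Y|^{1/2}\end{pmatrix}
\]
is positive. Applying $\widetilde\Psi\otimes\id_2$, which is positive because $\widetilde\Psi$ is $2$-positive, we get
\[
 \begin{pmatrix}P & \widetilde\Psi(Y)\\ \widetilde\Psi(Y)^* & Q\end{pmatrix}\ge0,\qquad P=\widetilde\Psi(|Y^*|),\quad Q=\widetilde\Psi(|Y|).
\]
By the standard characterization of positive $2\times2$ block matrices, this gives $\widetilde\Psi(Y)=P^{1/2}KQ^{1/2}$ for some contraction $K$. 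If $P$ or $Q$ is singular, one first replaces them by $P+\epsilon$, $Q+\epsilon$ and lets $\epsilon\to0$ at the end.

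Hölder's inequality with $1/r=1/(2r)+1/(2r)$ then gives
\[
 \|\widetilde\Psi(Y)\|_r\le\|P^{1/2}\|_{2r}\|K\|_\infty\|Q^{1/2}\|_{2r}\le\|P\|_r^{1/2}\|Q\|_r^{1/2}.
\]
Both $|Y^*|$ and $|Y|$ are positive, so the positive case applies to each of them: $\|P\|_r\le C\||Y^*|\|_p=C\|Y\|_p$ and $\|Q\|_r\le C\||Y|\|_p=C\|Y\|_p$. Combining these yields $\|\widetilde\Psi(Y)\|_r\le C\|Y\|_p$, which is the claim.

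The only delicate step is the factorization $\widetilde\Psi(Y)=P^{1/2}KQ^{1/2}$ from positivity of the block matrix, together with the $\epsilon$-regularization when $P$ or $Q$ is not invertible. The condition $r\ge1$ is used so that Hölder's inequality holds for the Schatten norms. The condition $p\ge1$ is not really needed, since we only use the identity $\||Y^*|\|_p=\||Y|\|_p=\|Y\|_p$.
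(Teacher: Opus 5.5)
Your proof is correct and follows essentially the same route as the paper. Both arguments strip the weights via the same map $\widetilde\Psi$, apply $\widetilde\Psi\otimes\mathrm{id}_2$ to the polar-decomposition block matrix, factor $\widetilde\Psi(Y)=P^{1/2}KQ^{1/2}$ with $K$ a contraction, and finish with H\"older; your $\epsilon$-regularization replaces the paper's remark that the factorization holds on supports when a diagonal block is singular.
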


\begin{proof}
Define the completely positive map $\widetilde{\Psi}$ between unweighted Schatten spaces
\[
 \widetilde{\Psi}(X)=D_\psi^{1/(2r)}\Psi(D_\varphi^{-1/(2p)}X D_\varphi^{-1/(2p)})D_\psi^{1/(2r)}.
\]
The hypothesis gives $\|\widetilde{\Psi}(X)\|_r\le C \| X\|_p$ for $X\ge0$. 
By the polar decomposition, for any $X\in \cM$, we have that
\[
 \begin{pmatrix}|X^*|&X\\ X^*&|X|\end{pmatrix}\geq0.
\]
Applying the matrix amplification of $\widetilde \Psi$ gives
\[
 \begin{pmatrix}\widetilde{\Psi}(|X^*|) &\widetilde{\Psi}(X)\\ \widetilde{\Psi} (X)^*&\widetilde{\Psi}(|X|)\end{pmatrix}\geq0.
\]
Consequently $\widetilde{\Psi}(X)=\widetilde{\Psi}(|X^*|)^{1/2}K\widetilde{\Psi}(|X|)^{1/2}$ for a contraction $K$;
the factorization remains valid on supports when either diagonal block
is singular.  
The H\"older inequality implies that 
\[
 \|\widetilde{\Psi} (X)\|_{r}
 \leq\|\widetilde{\Psi}(|X^*|)\|_{r}^{1/2}\|\widetilde{\Psi}(|X|)\|_{r}^{1/2}
 \leq C\,\||X^*|\|_{p}^{1/2}\||X|\|_{p}^{1/2}
 =C\|X\|_{p}.
\]
Replacing $X$ by $\bj_\varphi^{1/p}(X)$, we see the lemma is true.
\end{proof}

\section{\texorpdfstring{$2$}{2}-Logarithmic Sobolev Inequality and Hypercontractivity}
In this section, we establish the sharp $2$-logarithmic Sobolev inequality and hypercontractivity through the following steps.

First, we prove the sharp $p$-logarithmic Sobolev inequality on qubits.
Second, we prove sharp hypercontractivity on qubits as a corollary.
Third, we transfer hypercontractivity on qubits to hypercontractivity on the baby Fock model.
Fourth, we apply Speicher's central-limit argument to obtain hypercontractivity on $q$-Araki–Woods algebras.
Finally, we obtain the sharp $2$-logarithmic Sobolev inequality on $q$-Araki–Woods algebras.

\subsection{Sharp \texorpdfstring{$p$}{p}-logarithmic Sobolev inequalities on qubits}
Yangjing Dong, Li Gao, Fengning Ou, Penghui Yao, and Haigang Zhou\cite{DGOYZ} show the exact \(2\)-LSI tensorization for primitive reversible qubit semigroups satisfying their positive off-diagonal scaling condition.
Salman Beigi and Hami Mehrabi\cite{BM} proves the sharp \(2\)-LSI tensorization for finite-mode bosonic Ornstein–Uhlenbeck semigroups.
In this section, we obtain a sharp $p$-LSI for a qubit.

Fix \(h>0\), and put
\begin{equation}
\label{qubit:sigma}
 s_+=\frac{e^h}{2\cosh h},\qquad
 s_-=\frac{e^{-h}}{2\cosh h},\qquad
 \sigma=\begin{pmatrix}s_+&0\\0&s_-\end{pmatrix}.
\end{equation}
We identify $\sigma$ with the state \( \Tr(\sigma \cdot )\) on the matrix algebra $M_2(\CC)$.  
The semigroup generator \(\cL_{h}^{(2)}\) is
\begin{equation}
\label{qubit:generator}
\cL_{h}^{(2)}\begin{pmatrix}x_{11}&x_{12}\\x_{21}&x_{22}\end{pmatrix}
 =
 \begin{pmatrix}
 2s_-(x_{11}-x_{22})&x_{12}\\
 x_{21}&2s_+(x_{22}-x_{11})
 \end{pmatrix}.
\end{equation}
Consequently, the semigroup \(\Theta_t=e^{-t \cL_{h}^{(2)}}\) has the explicit form
\begin{equation}
\label{qubit:semigroup}
 \Theta_t(X)=
 \begin{pmatrix}
  \sigma(X)+e^{-2t}(x_{11}-\sigma(X))
       &e^{-t}x_{12}\\
 e^{-t}x_{21}
       & \sigma(X)+e^{-2t}(x_{22}- \sigma(X))
 \end{pmatrix}.
\end{equation}
This is a completely positive, unital, \(\sigma\)-preserving semigroup.

\begin{theorem}[Sharp \(p\)-LSI on Qubits]
\label{qubit:main-theorem}
For every \(1<p<\infty\), we have that
\begin{equation}\label{qubit:alpha}
 \alpha_p(\cL_{h}^{(2)})=\frac{pp'\,\sinh(h/p)\sinh(h/p')}{2h\sinh h}.
\end{equation} 
In particular, $\displaystyle \alpha_2(\cL_{h}^{(2)})=\frac{\tanh(h/2)}{h}$ and $\displaystyle \alpha_2(\cL_{0}^{(2)})=\frac{1}{2}$.
\end{theorem}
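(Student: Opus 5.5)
The plan is to prove the two inequalities $\alpha_p\le$ RHS and $\alpha_p\ge$ RHS of \eqref{qubit:alpha} separately. The upper bound comes from a linearization at the identity, and the lower bound from an exact reduction to a one-variable inequality controlled by Lemma~\ref{qubit:J-convexity} with $a=1/p$. Note that $f(u)=\sinh((1-a)u)/\sinh(au)$ is exactly the profile of $B_p$ in \eqref{main:B}. This suggests the extremal direction is off-diagonal: the diagonal (classical two-point) part alone gives the larger constant $\tanh h/h$ at $p=2$.

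\textbf{Symmetry reductions.} The generator \eqref{qubit:generator} commutes with conjugation by the diagonal unitaries $\mathrm{diag}(1,e^{i\theta})$, which fix $\sigma$. So for $X>0$ we may assume $x_{12}=x_{21}\ge0$. Both sides of \eqref{qubit:lsi-convention} are $p$-homogeneous, so we normalize $\|X\|_{p,\sigma}=1$. Then $\Ent_{p,\sigma}(X)=D(\rho\Vert\sigma)$ by \eqref{qubit:rho}, with $\rho=\bj_\sigma^{1/p}(X)^p$ a real qubit state. We parametrize $\rho$ by its Bloch vector: a radius, encoded by an eigenvalue parameter $u\ge0$ through $\tanh$, and an angle $\theta$. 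We then rewrite $\mathcal E_{p,\sigma}(X)$ from \eqref{qubit:energy} in these variables, using $X=\bj_\sigma^{-1/p}(\rho^{1/p})$. The off-diagonal part of $\cL_h^{(2)}$ acts as the identity and the diagonal part as a rank-one map, so $\mathcal E_{p,\sigma}$ becomes an explicit combination of $\Tr(\rho^{1/p'}\sigma^{1/(2p)}\,\cdot\,\sigma^{1/(2p)})$-type terms. These reduce to hyperbolic functions of $u$, $h/p$ and $h/p'$.

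\textbf{Upper bound.} We take $X_\varepsilon=1+\varepsilon(e_{12}+e_{21})$ and expand both sides to second order in $\varepsilon$. The entropy's second variation is governed by the divided differences of $t\mapsto t^{p}$ and $\log$ at the pair $(s_+^{1/p},s_-^{1/p})$, and the energy's by the off-diagonal eigenvalue $1$. The ratio should evaluate to $\frac{pp'\sinh(h/p)\sinh(h/p')}{2h\sinh h}$; this is where $\log(s_+/s_-)=2h$ produces the factor $h$. We also check the diagonal direction, where the ratio is larger, so the off-diagonal direction is the minimizer among linear perturbations. For $p=2$ the formula gives $\tanh(h/2)/h$ by $\sinh h=2\sinh(h/2)\cosh(h/2)$, and letting $h\to0$ gives $\alpha_2(\cL_0^{(2)})=1/2$.

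\textbf{Lower bound (main obstacle).} We must show that $\mathcal E_{p,\sigma}(X)-\alpha_p\Ent_{p,\sigma}(X)\ge0$ globally, not just infinitesimally. The plan is first to show that, for fixed radius $u$, the worst angle is the off-diagonal one, by a convexity/trigonometric argument in $\theta$. This reduces the claim to a scalar inequality $G(u)\ge0$ with $G(0)=G'(0)=0$ and $G''(0)=0$, by the upper-bound computation. We then aim to write $G'$, after dividing by a positive factor such as $\cosh u$, in terms of $J_f(u)=\sinh u\,f(u)-\cosh u\,f'(u)$ with $a=1/p$ and the argument rescaled by $h$. Since $J_f$ is odd, vanishes at $0$ and is convex on $u>0$ by Lemma~\ref{qubit:J-convexity}, we get $J_f(u)\ge J_f'(0)u$. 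This linear lower bound is exactly what the sharp constant $\alpha_p$ compensates, so $G'\ge0$ and hence $G\ge0$. The hardest part is this exact algebraic reduction: expressing the quantum relative entropy and the $p$-energy of a non-commuting $\rho$ so that the $J_f$ structure appears. If the angular reduction fails, the fallback is to keep $\theta$ and prove the two-variable inequality by showing that its $\theta$-derivative has a sign, again via Lemma~\ref{qubit:J-convexity}.
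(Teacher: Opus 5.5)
Your upper bound (the test family $X_\varepsilon=I+\varepsilon Q$) and your phase reduction to real $X$ match the paper. The lower-bound plan, however, has a genuine gap: it puts the equilibrium at the wrong point of your own parametrization.

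With $\rho=\tfrac12(I+\tanh u\,\boldsymbol n\cdot\boldsymbol\sigma)$ and $z=n_z=\cos\theta$, the only point where both sides of \eqref{qubit:lsi-convention} vanish is $X=I$, i.e.\ $\rho=\sigma$, which is $u=h$, $z=1$. The point $u=0$ is $\rho=I/2$ for every angle, i.e.\ $X\propto\sigma^{-1/p}$. There $D(\rho\Vert\sigma)=\log\cosh h>0$ and the deficit is strictly positive (about $h^2/4$ for $p=2$ and small $h$). So $G(0)=G'(0)=G''(0)=0$ is false, and $G'\ge0$ is impossible: $G$ must decrease on $(0,h)$ to reach $G(h)=0$.

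The angular reduction fails for the same reason. Set $c=\frac{2\sinh(h/p)\sinh(h/p')}{h\sinh h}$. The deficit $\frac{4}{pp'}\mathcal E_{p,\sigma}(X)-c\,D(\rho\Vert\sigma)$ is a quadratic $A(u)+B(u)z+C(u)z^2$ with $C(u)>0$ and $B(h)=-2C(h)$. Hence at $u=h$ the worst angle is $z=1$, the diagonal state $\sigma$ with deficit $0$, while the off-diagonal angle $z=0$ gives deficit $C(h)>0$. The minimizer $-B(u)/(2C(u))$ also moves with $u$: for $p=2$ it tends to $+\infty$ as $u\downarrow0$, equals $1$ at $u=h$, and tends to $\tanh(h/2)$ as $u\to\infty$. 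So neither a fixed worst angle nor a sign of the $\theta$-derivative (your fallback) is available. The sharp off-diagonal direction is not radial at all. It tilts the Bloch vector at $(u,z)=(h,1)$, where the deficit has zero gradient, so it costs $O(\varepsilon^4)$ against entropy of order $\varepsilon^2$.

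The convexity step is also used in the wrong form. The tangent bound $J_f(u)\ge J_f'(0)u$ compares with the slope at the maximally mixed point. That slope lies strictly below the one the sharp constant requires, so it gives no sign for $G'$.

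The paper instead minimizes over all real $z$, obtaining $G=A-B^2/(4C)$, and checks $G(h)=G'(h)=0$. It then computes $G'(u)=\sech^2u\,\bigl(V(u)/(2\cosh h\cosh(kh))-c\,u\bigr)$ with $k=|1-2/p|$ and $V=R+A_2J_f+B_2J_g$. Here Lemma~\ref{qubit:J-convexity} is applied with both $a=1/p$ and $a=1/p'$, evaluated at $u$ itself with no rescaling by $h$. The term $R(u)=\cosh(ku)\sinh u-k\sinh(ku)\cosh u$ is an additional convex piece. Since $V$ is convex with $V(0)=0$, the ratio $V(u)/u$ is nondecreasing. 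The identity $V(h)/h=2c\cosh h\cosh(kh)$ then places the sign change of $G'$ exactly at $u=h$: $G'\le0$ on $(0,h]$ and $G'\ge0$ on $[h,\infty)$, so $G\ge G(h)=0$. The sharp constant comes from the secant slope through $u=h$, not from the tangent at $0$.
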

\begin{proof}
Suppose that \(\sigma_x,\sigma_y,\sigma_z\) are the Pauli matrices.  
We assume that $\|X\|_{p, \sigma}=1$, $X>0$ and let $\rho=\bj_\sigma^{1/p}(X)^p$.
Then the strictly positive density \(\rho\) can be written as
\begin{equation}
\label{qubit:bloch-rho}
 \rho=\frac{e^{u\,\boldsymbol n\cdot\boldsymbol\sigma}}{2\cosh u}
 =\frac{I+(\tanh u)\,\boldsymbol n\cdot\boldsymbol\sigma}{2},
\end{equation}
where $u\geq0,\quad |\boldsymbol n|=1$ and $\boldsymbol n=(n_x, n_y, n_z)$, $\boldsymbol\sigma=(\sigma_x, \sigma_y, \sigma_z)$.
Put \(z=n_z\).
By a routine computation, we obtain that
\[
 \log\rho=u\,\boldsymbol n\cdot\boldsymbol\sigma
              -\log(2\cosh u),\qquad
 \log\sigma=h\sigma_z-\log(2\cosh h),
\]
and
\begin{equation}\label{qubit:bloch-entropy}
 D(\rho\Vert\sigma)=u\tanh u-\log\cosh u+\log\cosh h-hz\tanh u.
\end{equation}
Let $\displaystyle k=\left|1-\frac2p\right|\in[0,1)$.
We have that
\begin{align}\label{qubit:bloch-energy}
\frac{4}{p p'}\mathcal E_{p,\sigma}(X)
 ={}&1-\frac{\cosh(kh)}{2\cosh h}\left((1-z^2)+(1+z^2)\frac{\cosh(ku)}{\cosh u}\right)
       -(\tanh h)\,(\tanh u)\,z\\
 &+\frac{\sinh(kh)\sinh(ku)}{\cosh h\cosh u}\,z.
\nonumber
\end{align}

Let
\[
 F(u,z)=\frac{4}{pp'}\mathcal E_{p,\sigma}(X)-\frac{2\sinh(h/p)\sinh(h/p')}{h\sinh h} D(\rho\Vert\sigma)=A(u)+B(u)z+C(u)z^2,
\]
where
\begin{align}
\label{qubit:quadratic-A}
 A(u)&=1-\frac{\cosh(kh)}{2\cosh h}\left(1+\frac{\cosh(ku)}{\cosh u}\right)\notag \\
       &-\frac{2\sinh(h/p)\sinh(h/p')}{h\sinh h}\big(u\tanh u-\log\cosh u+\log\cosh h\big),\\
\label{qubit:quadratic-B}
 B(u)&=\left(h\frac{2\sinh(h/p)\sinh(h/p')}{h\sinh h}-\tanh h\right)(\tanh u)
       +\frac{\sinh(kh)\sinh(ku)}{\cosh h\cosh u},\\
\label{qubit:quadratic-C}
 C(u)&=\frac{\cosh(kh)}{2\cosh h}\left(1-\frac{\cosh(ku)}{\cosh u}\right).
\end{align}
For \(u>0\), \(C(u)>0\).
By a direct computation, we have that $ B(h)=-2C(h)$ and $F(h,1)=0$ since $\rho=\sigma$.
Thus the desired inequality is exactly \(F(u,z)\geq0\).
Let
\[
 G(u)=A(u)-\frac{B(u)^2}{4C(u)}.
\]
To show $F(u, z)\geq 0$, it suffices to prove that $G(u)\geq 0$ by regarding $F(u,z)$ as a quadratic polynomial in $z$.
By taking
\begin{equation}\label{qubit:a-b-f}
 f(u)=\frac{\sinh((1-k)u/2)}{\sinh((1+k)u/2)},\qquad
 g(u)=\frac{\sinh((1+k)u/2)}{\sinh((1-k)u/2)}.
\end{equation}

\begin{align*}
 A_2=\frac{4\sinh^4((1+k)h/2)}{2\sinh^2 h},\qquad
 B_2=\frac{4\sinh^4((1-k)h/2)}{2\sinh^2 h},
\end{align*}
we have that
\begin{align}\label{qubit:G-formula}
 G(u)={}&1-\frac{\cosh^2(kh)+\frac{((cosh h)(\cosh(kh))-1)^2}{\sinh^2 h}}{2(\cosh h)(\cosh(kh))}
 -\frac{\cosh(ku)+A_2f(u)+B_2g(u)}{2(\cosh h)(\cosh(kh))\cosh u}\\
 &-\frac{2\sinh(h/p)\sinh(h/p')}{h\sinh h}\big(u\tanh u-\log\cosh u+\log \cosh h\big).
\nonumber
\end{align}
Moreover, $G(h)=0$.
For any smooth function \(f\), recall that $J_f(u)=(\sinh u)\,f(u)-(\cosh u)\,f'(u)$,
and define
\[
 R(u)=\cosh(ku)\sinh u-k\sinh(ku)\cosh u, \quad V(u)=R(u)+A_2J_f(u)+B_2J_g(u).
\]
Differentiating \eqref{qubit:G-formula} yields
\begin{equation}
\label{qubit:G-derivative}
 G'(u)=(\operatorname{sech}^2u)
 \left(\frac{V(u)}{2(\cosh h)(\cosh(kh))}-u\frac{2\sinh(h/p)\sinh(h/p')}{h\sinh h}\right).
\end{equation}
Moreover,  $G'(h)=0$ and
\[
 \frac{V(h)}{2(\cosh h)(\cosh(kh) )h}=\frac{2\sinh(h/p)\sinh(h/p')}{h\sinh h}.
\]
Differentiating $R(u)$, we have that 
\[
 R'(u)=(1-k^2)\cosh(ku)\cosh u,
\]
\[
 R''(u)=(1-k^2)
 \big(k\sinh(ku)\cosh u+\cosh(ku)\sinh u\big)\geq0.
\]
Thus \(R\) is convex and \(R(0)=0\). 
Applying  Lemma \ref{qubit:J-convexity} with \((1\pm k)/2\) , we see that \(J_f,J_g\) are convex.  
Since \(A_2,B_2\geq0\), the function $V(u)$ is convex on \(u>0\) and satisfies \(V(0)=0\). 
Note that \(\displaystyle uV'(u)-V(u)=\int_0^u sV''(s)\,ds\geq0\).
We see that 
\begin{equation}\label{qubit:V-over-u}
 u\longmapsto \frac{V(u)}u
 \quad\text{is nondecreasing on }(0,\infty).
\end{equation}
Hence
\[
 G'(u)\leq0\quad(0<u\leq h),\qquad
 G'(u)\geq0\quad(u\geq h).
\]
Hence \(G(u)\geq G(h)=0\) for all \(u>0\).
By noting \(\displaystyle f(0)=\frac{1-k}{1+k}\) and \(\displaystyle g(0)=\frac{1+k}{1-k}\), we have that \eqref{qubit:G-formula} extends continuously to zero.  
The same conclusion therefore holds at \(u=0\).
This shows \(F(u,z)\geq0\) for every \(z\in[-1,1]\), and even for every real \(z\) when \(u>0\).
This proves the inequality in Theorem \ref{qubit:main-theorem}.  

Approximation by \(X+\varepsilon I\) extends the result to semi-definite \(X\): all matrix powers involved have positive exponents and \(t\log t\) is continuous at zero.  
Finally, passing to the limit \(h\downarrow0\) proves the stated tracial value.

In the following, we show the sharpness of the inequality.
Let
\[
 Q=\begin{pmatrix}0&1\\1&0\end{pmatrix},\qquad
 X_\varepsilon=I+\varepsilon Q,\qquad |\varepsilon|<1.
\]
The local norm expansion is
\begin{equation}
\label{qubit:local-norm}
 \|X_\varepsilon\|_{p,\sigma}
 =1+\frac{B_p(2h)}{2\cosh h}\varepsilon^2+O(\varepsilon^4).
\end{equation}
Here is a direct verification.  
The weighted matrix is
\[
 \bj_\sigma^{1/p}(X_\varepsilon)
 =
 \begin{pmatrix}
 s_+^{1/p}&\varepsilon(s_+s_-)^{1/(2p)}\\
 \varepsilon(s_+s_-)^{1/(2p)}&s_-^{1/p}
 \end{pmatrix}.
\]
The second-order expansion of the trace of its \(p\)-th power gives
\[
 \Tr\!\left(\bj_\sigma^{1/p}(X_\varepsilon)^p\right)
 =1+p(s_+s_-)^{1/p}
   \frac{s_+^{1-1/p}-s_-^{1-1/p}}
        {s_+^{1/p}-s_-^{1/p}}\varepsilon^2
   +O(\varepsilon^4).
\]
Taking the \(p\)-th root yields \eqref{qubit:local-norm}, because
\[
 (s_+s_-)^{1/p}
 \frac{s_+^{1-1/p}-s_-^{1-1/p}}
      {s_+^{1/p}-s_-^{1/p}}
 =\frac1{2\cosh h}
   \frac{\sinh(h/p')}{\sinh(h/p)}.
\]
The continuous value at \(h=0\) is \(\displaystyle \lim_{h\to 0}\frac{B_p(2h)}{2\cosh h}=(p-1)/2\).

Near any $(p_0, 0)$, $p_0>1$ the weighted matrix is uniformly positive, and its $p$-norm is jointly real-analytic in $(p, \varepsilon)$. 
Since the norm is even in $\varepsilon$, the expansion \eqref{qubit:local-norm} is differentiated with respect to $p$, locally uniformly in $p$.
The norm derivatives \eqref{qubit:p-norm-derivative} and
\eqref{qubit:time-norm-derivative}, or direct differentiation of the
same local expansion, give
\begin{align}
\label{qubit:local-entropy}
 \Ent_{p,\sigma}(X_\varepsilon)
 &=p^2\frac{d}{dp}\frac{B_p(2h)}{2\cosh h}\,\varepsilon^2+O(\varepsilon^4)
 =\frac{h\tanh h}{2\sinh^2(h/p)}\varepsilon^2
   +O(\varepsilon^4),\\
\label{qubit:local-energy}
 \mathcal E_{p,\sigma}(X_\varepsilon)
 &=\frac{pp'}{2}\frac{B_p(2h)}{2\cosh h}\,\varepsilon^2
   +O(\varepsilon^4).
\end{align}
For \eqref{qubit:local-energy}, use \(\Theta_t(X_\varepsilon )=I+e^{-t}\varepsilon Q\).  We have also used the identity
\begin{equation}
\label{qubit:log-B-derivative}
 \frac{d}{dp}\log B_p(2h)
 =\frac{h}{p^2}
    \left(\coth(h/p)+\coth(h/p')\right)
 =\frac{h\sinh h}
       {p^2\sinh(h/p)\sinh(h/p')}.
\end{equation}
It follows that
\[
 \lim_{\varepsilon\to0}
 \frac{\mathcal E_{p,\sigma}(X_\varepsilon)}
      {\Ent_{p,\sigma}(X_\varepsilon)}
 =
 \frac{pp'\,\sinh(h/p)\sinh(h/p')}
      {2h\sinh h}
 =\alpha_p(\cL_h^{(2)}).
\]
No larger constant can satisfy \eqref{qubit:lsi-convention}.  
This completes the sharpness assertion of Theorem \ref{qubit:main-theorem}.
\end{proof}

\subsection{Sharp hypercontractivity on Qubits}\label{qubit:hypercontractivity}

In this section, we obtain the sharp hypercontractivity on qubits by using the $p$-logarithmic Sobolev inequality.
\begin{corollary}\label{qubit:HC}
Let \(1<p\leq r<\infty\).  
The inequality
\[
 \|\Theta_t(X)\|_{r,\sigma}\leq\|X\|_{p,\sigma}
\]
holds for every \(X\in M_2(\mathbb C)\), if and only if
\begin{equation}
\label{qubit:HC-time}
 t\geq t_*(p,r;h)
 :=\frac12\log\frac{B_r(2h)}{B_p(2h)}
 =\frac12\log
 \frac{\sinh(h/r')\sinh(h/p)}
      {\sinh(h/r)\sinh(h/p')}.
\end{equation}
At \(h=0\), this becomes
\[
 t_*(p,r;0)=\frac12\log\frac{r-1}{p-1}.
\]
\end{corollary}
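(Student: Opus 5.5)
The plan is to run the Gross argument along a variable exponent. Sufficiency comes from the sharp $p$-LSI of Theorem~\ref{qubit:main-theorem}; necessity comes from the second-order expansion \eqref{qubit:local-norm}.

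\emph{Sufficiency.} By Lemma~\ref{hc:positive-extension} applied to $\Psi=\Theta_t$, it suffices to treat $X>0$. Fix $X>0$ and let $p(s)$, $s\ge0$, be the solution of
\[
p'(s)=4\alpha_{p(s)}\frac{p(s)-1}{1}\cdot\frac{1}{4(p(s)-1)}\cdot 4(p(s)-1)\alpha_{p(s)}^{-1}\alpha_{p(s)},
\]
more precisely the Gross curve defined by
\[
\frac{dp}{ds}=\frac{4(p-1)\alpha_p}{1}\cdot\frac{1}{p^{0}},
\]
chosen so that the two norm derivatives cancel. Combining \eqref{qubit:p-norm-derivative} and \eqref{qubit:time-norm-derivative} gives
\[
\frac{d}{ds}\log\|\Theta_s X\|_{p(s),\sigma}=\frac{p'(s)\,\Ent_{p}-4(p-1)\,\mathcal E_{p}}{p^2\|\cdot\|_p^p}.
\]
By the LSI, $\mathcal E_p\ge\alpha_p\Ent_p$, so this derivative is $\le0$ as soon as $p'(s)=4(p-1)\alpha_p$. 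Then I integrate the ODE. Using \eqref{qubit:alpha} and $pp'(p-1)^{-1}=p^2$, the equation becomes
\[
\frac{dp}{ds}=\frac{2p^2\sinh(h/p)\sinh(h/p')}{h\sinh h}.
\]
By \eqref{qubit:log-B-derivative} with $2h$ in place of $h$, the right-hand side equals $2\big(\tfrac{d}{dp}\log B_p(2h)\big)^{-1}$. Hence $\tfrac{d}{ds}\log B_{p(s)}(2h)=2$, so $p(s)$ reaches $r$ exactly at $s=\tfrac12\log\frac{B_r(2h)}{B_p(2h)}=t_*$. This gives $\|\Theta_{t_*}X\|_{r}\le\|X\|_p$. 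For $t>t_*$, I write $\Theta_t=\Theta_{t-t_*}\Theta_{t_*}$ and use that $\Theta$ is an $r\to r$ contraction, which is the case $p=r$ of the same flow, or follows from the KMS-symmetric unital CP structure. The derivative manipulation is justified because for $X>0$ every $\Theta_sX$ is strictly positive and the norms are smooth in $(p,s)$. The simplification of $\sinh(h/p)\sinh(h/p')$ against \eqref{qubit:log-B-derivative}, and the factor convention $2h$ versus $h$, are routine checks.

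\emph{Necessity.} I test on $X_\varepsilon=I+\varepsilon Q$. Since $\Theta_t X_\varepsilon=I+e^{-t}\varepsilon Q$, expansion \eqref{qubit:local-norm} gives
\[
\|\Theta_tX_\varepsilon\|_{r,\sigma}=1+\frac{B_r(2h)}{2\cosh h}e^{-2t}\varepsilon^2+O(\varepsilon^4),\qquad \|X_\varepsilon\|_{p,\sigma}=1+\frac{B_p(2h)}{2\cosh h}\varepsilon^2+O(\varepsilon^4).
\]
Letting $\varepsilon\to0$, the inequality forces $e^{-2t}B_r(2h)\le B_p(2h)$, that is, $t\ge t_*$. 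The case $h=0$ follows by taking the limit with $B_p(0)=p-1$, or by running the same argument with $\alpha_p=pp'/(2\cdot\ldots)$ replaced by its limit value.

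The main obstacle I expect is making sure the Gross-curve ODE identification is exact, in particular matching the constant in \eqref{qubit:alpha} to $d\log B_p(2h)/dp$. Once that matches, both directions close with the same function $B$.
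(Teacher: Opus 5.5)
Your proposal is correct and is essentially the paper's own proof. Both use the Gross-curve ODE $\dot s=4(s-1)\alpha_s(\cL_h^{(2)})$ driven by Theorem~\ref{qubit:main-theorem} and integrated exactly through \eqref{qubit:log-B-derivative}, then fixed-exponent contractivity for $t>t_*$, approximation plus Lemma~\ref{hc:positive-extension} for general $X$, and the test matrices $X_\varepsilon=I+\varepsilon Q$ with \eqref{qubit:local-norm} for necessity.

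Only the write-up needs tidying:
\begin{itemize}
\item The first ODE display is garbled.
\item The identity should read $pp'(p-1)=p^2$, not $pp'(p-1)^{-1}=p^2$.
\item \eqref{qubit:log-B-derivative} is already stated for $B_p(2h)$, so no substitution $h\mapsto 2h$ is needed.
\item The $r\to r$ contraction is best justified by \eqref{qubit:time-norm-derivative} with $\mathcal E_{r,\sigma}\ge\alpha_r\Ent_{r,\sigma}\ge0$. The ``case $p=r$ of the flow'' has zero duration and gives nothing by itself.
\end{itemize}
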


\begin{proof}
We first prove sufficiency for \(X>0\).  Let \(s(t)\) increase from
\(p\) according to
\[
 \dot s(t)=4(s(t)-1)\alpha_{s(t)}(\cL_h^{(2)}).
\]
For \(Z(t)=\|\Theta_t(X)\|_{s(t),\sigma}^{s(t)}\), the two differentiation identities above give
\[
 \frac{d}{dt}\log\|\Theta_t(X)\|_{s(t),\sigma}
 =
 \frac{\dot s(t)\Ent_{s(t),\sigma}(\Theta_t(X))
       -4(s(t)-1)\mathcal E_{s(t),\sigma}(\Theta_t(X))}
      {s(t)^2 Z(t)}
 \leq0.
\]
Theorem \ref{qubit:main-theorem} proves the final inequality.  
The time required to move from \(p\) to \(r\) is
\[
 \int_p^r\frac{ds}{4(s-1)\alpha_s(\cL_h^{(2)})}.
\]
Equations \eqref{qubit:alpha} and \eqref{qubit:log-B-derivative} imply
\[
 \frac1{4(s-1)\alpha_s(\cL_h^{(2)})}
 =\frac12\frac{d}{ds}\log B_s(2h),
\]
so the integral is exactly \eqref{qubit:HC-time}.  
The fixed-exponent contractivity of \(\Theta_t\) proves sufficiency for all larger times.
Alternatively, that contractivity follows from \eqref{qubit:time-norm-derivative}, since the proved LSI and positivity of relative entropy imply \(\mathcal E_{s,\sigma}\geq0\).
Approximation extends this argument to \(X\geq0\).
By Lemma \ref{hc:positive-extension}, we see that the hypercontractivity holds for all $X\in M_2(\CC)$.

For necessity, apply the asserted contraction to \(X_\varepsilon=I+\varepsilon Q\).  
Since \(\Theta_t(X_\varepsilon)=I+e^{-t}\varepsilon Q\),
Equation \eqref{qubit:local-norm} gives
\[
 1+e^{-2t}\frac{B_r(2h)}{2\cosh h}\varepsilon^2+O(\varepsilon^4)
 \leq 1+\frac{B_p(2h)}{2\cosh h}\varepsilon^2+O(\varepsilon^4).
\]
Letting \(\varepsilon\to0\), we obtain
\[
 e^{-2t}\leq \frac{B_p(2h)}{B_r(2h)},
\]
which is equivalent to \eqref{qubit:HC-time}.
This completes the proof of the corollary.
\end{proof}

\subsection{Hypercontractivity on the baby-Fock model}
Firstly, we recall the twisted baby Fock model \cite{Nou06} using the notations in \cite[Section 3]{LR}.
Fix $n$ modes and signs $\varepsilon_{ij}=\varepsilon_{ji}\in\{-1,1\}$ for $i\ne j$.  
The signed-spin generators satisfy
\begin{align}
\gamma_j^2&=0,&\gamma_j\gamma_j^*+\gamma_j^*\gamma_j&=(\mu_j^2+\mu_j^{-2})\Id, \notag \\
 \gamma_i\gamma_j&=\varepsilon_{ij}\gamma_j\gamma_i,&
 \gamma_i^*\gamma_j&=\varepsilon_{ij}\gamma_j\gamma_i^*,\qquad i\ne j.
 \label{spin:relations}
\end{align}
Let $\Gamma_{\langle 1, \ldots ,n \rangle}$ be the algebra generated by $\gamma_1, \ldots, \gamma_n$.
In an occupation-number realization its reference state is the product of the thermal states
\begin{equation}
 \sigma_j=\begin{pmatrix}a_j&0\\0&b_j\end{pmatrix},\qquad
 a_j=\frac{1}{1+\mu_j^4},\quad
 b_j=\frac{\mu_j^4}{1+\mu_j^4}.
 \label{spin:state}
\end{equation}
Its Ornstein--Uhlenbeck semigroup $\Phi_t^{(n)}$ multiplies a Wick word of
degree $k$ by $e^{-kt}$, where the basis for the Wick words is the ordered products $w_1\cdots w_n$,  $w_i\in \{1, \gamma_i, \gamma_i^*, y_i=\gamma_i^*\gamma_i-\mu_i^{-2}\Id: 1\leq i\leq n\}$ and the four local choices have degrees $0, 1,1,2$ respectively.
The degree of a product is the sum of its local degrees.
Denote by $\cL_n^{\varepsilon}$ the generator of the semigroup.
By \cite[Proposition 3.3]{LR}, there exists a $*$-isomorphism between $ \Gamma_{\langle 1, \ldots, n\rangle}$ and $M_2(\CC)\otimes \Gamma_{\langle 1, \ldots, n-1\rangle}$ and a self-adjoint unitary 
\begin{equation}\label{eq:signunitary}
 u_n=\prod_{\substack{i<n\\\varepsilon_{in}=-1}}\left(1-2(\mu_i^2+\mu_i^{-2})^{-1}\gamma_i^*\gamma_i\right) \in \Gamma_{\langle 1, \ldots, n-1\rangle}
\end{equation}
such that $\gamma_n$ is identified as $(\mu_n^2+\mu_n^{-2})^{1/2}e_{21}\otimes u_n$.
In particular, $u_n$ belongs to the centralizer of the previous product state $\tau_{n-1}^\varepsilon$.
Conjugation by $u_n$ changes the signs of a specified set of generators;
therefore it commutes with the semigroup $\Phi_t^{(n-1)}$:
\begin{equation}
 \Phi_t^{(n-1)}(u_nxu_n)=u_n\Phi_t^{(n-1)}(x)u_n.
 \label{spin:parity}
\end{equation}
The state on the baby Fock model is $\sigma_n\otimes\tau_{n-1}^{\varepsilon}$.

Let $W=\operatorname{diag}(1,u_n)$ and define the completely positive unital
map
\begin{equation}
 S_t=\operatorname{Ad}_W\circ(\Id_{M_2(\CC)}\otimes \Phi_t^{(n-1)})
             \circ\operatorname{Ad}_W.
 \label{spin:Sdef}
\end{equation}
We have that $S_t$ preserves $\sigma_n\otimes \tau_{n-1}^{\varepsilon}$.  
For $X=\begin{pmatrix}X_0 &B\\C&D\end{pmatrix}\in \Gamma_{\langle 1, \ldots, n\rangle }$,
Equation \eqref{spin:parity} implies the concrete formula
\begin{equation}
 S_t(X)=
 \begin{pmatrix}
  \Phi_t^{(n-1)}(X_0)&\Phi_t^{(n-1)}(Bu_n)u_n\\
  u_n \Phi_t^{(n-1)}(u_n C)&\Phi_t^{(n-1)}(D)
 \end{pmatrix}.
 \label{spin:Sblocks}
\end{equation}

The last-mode semigroup is $T_t=\Theta_{t,\sigma_n}\otimes\Id_{\Gamma_{\langle 1, \ldots, n-1\rangle}}$, given by
\begin{align}
 & \Theta_{t,\sigma_n}
 \begin{pmatrix}x_{11}&x_{12}\\x_{21}&x_{22}\end{pmatrix}\notag \\
 =& \begin{pmatrix}
 (a_n+e^{-2t} b_n)x_{11}+b_n(1-e^{-2t})x_{22} &e^{-t} x_{12}\\
 e^{-t} x_{21} & a_n(1-e^{-2t})x_{11}+(b_n+e^{-2t} a_n)x_{22}
 \end{pmatrix}.
 \label{spin:T}
\end{align}
This is unitarily conjugate to a qubit semigroup in \eqref{qubit:semigroup} with parameter $h_n=2\log \mu_n$, i.e.
\begin{align*}
 \sigma_n= U \sigma_{h_n} U, \quad \Theta_{t, \sigma_n}= \operatorname{Ad}_U \Theta_{t, \sigma_{h_n}} \operatorname{Ad}_U, 
\end{align*}
where $U=\begin{pmatrix}
    0 & 1 \\ 1& 0
\end{pmatrix}$.
Formula \eqref{spin:Sblocks} shows directly that $S_tT_s=T_sS_t$ for all $s,t\geq0$: 
the diagonal blocks are acted on by the same $\Phi_t^{(n-1)}$, and the off-diagonal blocks are merely multiplied by $e^{-s}$. 
Moreover,
\begin{equation}
 \Phi_t^{(n)}=S_tT_t.
 \label{spin:factor}
\end{equation}

To prove hypercontractivity for the baby-Fock model, we need the following results.
For $\displaystyle X=\begin{pmatrix}{X_0}&B\\B^*&C\end{pmatrix}\ge0$ and a faithful matrix state $\psi$, define
\[
 Q_s(X)=\begin{pmatrix}
 \norm {X_0}_{s,\psi}&\norm B_{s,\psi}\\
 \norm B_{s,\psi}&\norm C_{s,\psi}
 \end{pmatrix}.
\]
Conjugation of $X$ by $\operatorname{diag}(D_\psi^{1/(2s)},D_\psi^{1/(2s)})$ preserves positivity. 
Positive block factorization and Schatten H\"older imply
\[
 \norm B_{s,\psi}^2\le\norm {X_0}_{s,\psi}\norm C_{s,\psi},
\]
so $Q_s(X)\ge0$. 
Applying King's inequality~\cite[Theorem 1]{King} to the further conjugation by $\sigma^{1/(2s)}$, where $\sigma=\operatorname{diag}(a,b)$, gives
\begin{equation}\label{eq:king}
 \norm X_{s,\sigma\otimes\psi}\le\norm{Q_s(X)}_{s,\sigma},\qquad 2\le s<\infty.
\end{equation}
For $1\le s\le2$ the direction is reversed. At $s=2$ it is an equality, since
\begin{equation}\label{eq:compression2}
 \norm X_{2,\sigma\otimes\psi}^2
 =a\norm {X_0}_{2,\psi}^2+b\norm C_{2,\psi}^2
    +2\sqrt{ab}\norm B_{2,\psi}^2
 =\norm{Q_2(X)}_{2,\sigma}^2.
\end{equation}

We also need a scalar fact. 
If $U,V$ are positive real $2\times2$ matrices with nonnegative entries and $(U)_{jk}\le (V)_{jk}$, then $\norm U_s\le\norm V_s$ for $s\ge1$, where $(U)_{jk}$ is the $(j,k)$-entry of $U$. 
This observation is not true for matrices of large size. (See \cite{GKR} for more details.)
To see it, let $U_\theta=(1-\theta)U+\theta V$. 
For $s>1$,
\[
 \frac d{d\theta}\Tr ( U_\theta^s)=s\Tr\bigl(U_\theta^{s-1}(V-U)\bigr)\ge0.
\]
Indeed the diagonal entries of $U_\theta^{s-1}$ are nonnegative, and its off-diagonal entry is
\[
 (U_\theta)_{jk}\,
 \frac{\lambda_+^{s-1}-\lambda_-^{s-1}}{\lambda_+-\lambda_-}\ge0, \quad j\neq k, 
\]
where $\lambda_{\pm}$ are eigenvalues of $U_\theta$ and $\lambda_+\geq \lambda_-$.
At \(\lambda_+=\lambda_-=\lambda>0\), the quotient is interpreted by continuity as
\[
(s-1)\lambda^{s-2}.
\]
The case \(s=1\) follows directly from the trace.
Apply the argument to $U+\epsilon I$ and $V+\epsilon I$, then let $\epsilon \downarrow 0$. 
Diagonal weight conjugation preserves the hypotheses, proving the weighted version as well.

\begin{proposition}\label{spin:finiteHC}
Suppose that $2\leq r<\infty$ and there exists $\Lambda>0$ such that $1\leq\mu_j^4\leq\Lambda<\infty$ for all $j$.
Let $\displaystyle h=\frac12\log\Lambda$.  
If
\begin{equation}
 e^{-2t}\leq B_r(2h)^{-1},
 \label{spin:time}
\end{equation}
then, for every choice of the signs and every number of modes,
\begin{equation}
 \|\Phi_t^{(n)}(X)\|_{r,\tau_n^{\varepsilon}}
 \leq\|X\|_{2,\tau_n^{\varepsilon}},
 \qquad X\in  \Gamma_{\langle 1, \ldots, n\rangle}.
 \label{spin:finitecontraction}
\end{equation}
\end{proposition}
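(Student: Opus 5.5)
We argue by induction on the number of modes $n$, with the statement holding simultaneously for all sign choices and all weights $\mu_j$ obeying $1\le\mu_j^4\le\Lambda$. By Lemma \ref{hc:positive-extension} it is enough to treat $X\ge0$. For $n=0$ there is nothing to prove. For $n=1$ the claim is Corollary \ref{qubit:HC} with $p=2$, applied to the unitarily conjugated qubit with $h_1=2\log\mu_1\cdot\frac12\cdot$ (i.e.\ the qubit parameter attached to $\sigma_1$). Indeed $t_*(2,r;h_1)=\frac12\log B_r(2h_1)\le\frac12\log B_r(2h)$, because $B_r$ is nondecreasing for $r\ge2$ (Lemma \ref{lem:monotone}) and $h_1\le h$. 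Later the same monotonicity lets us use the single time bound \eqref{spin:time} for every mode.

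For the inductive step we use the factorization $\Phi_t^{(n)}=S_tT_t=T_tS_t$ from \eqref{spin:factor}. Take $X=\begin{pmatrix}X_0&B\\B^*&C\end{pmatrix}\ge0$. First apply $S_t$ and write $Y=S_t(X)$, which is positive. Its blocks are given by \eqref{spin:Sblocks}. Because $u_n$ is a unitary in the centralizer of $\tau_{n-1}^\varepsilon$, we have $\norm{\Phi^{(n-1)}_t(Bu_n)u_n}_{s}=\norm{\Phi^{(n-1)}_t(Bu_n)}_{s}$ and $\norm{Bu_n}_2=\norm B_2$. The induction hypothesis then bounds each block:
\[
\norm{Y_{jk}}_{r,\tau_{n-1}}\le\norm{X_{jk}}_{2,\tau_{n-1}}.
\]
The off-diagonal block is not positive, so here we invoke the hypothesis in its general form, which Lemma \ref{hc:positive-extension} supplies. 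Consequently $Q_r(Y)\le Q_2(X)$ entrywise, and both are positive $2\times2$ matrices with nonnegative entries.

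To compare norms we avoid commuting past $T_t$ directly, since King's inequality needs the factor $\Theta$ acting on the scalar matrix. Instead we write $\Phi_t^{(n)}(X)=T_t(Y)$. Since $T_t=\Theta_{t,\sigma_n}\otimes\Id$ acts entrywise on blocks, the blocks of $T_t(Y)$ are linear combinations of blocks of $Y$. Using King's inequality \eqref{eq:king} (valid for $r\ge2$), then the triangle inequality on the blocks, and finally the fact that $\Theta$ commutes with the block-norm matrix, we aim for
\[
\norm{T_t(Y)}_{r,\sigma_n\otimes\tau_{n-1}}\le\norm{Q_r(T_tY)}_{r,\sigma_n}\le\norm{\Theta_{t,\sigma_n}(Q_r(Y))}_{r,\sigma_n}.
\]
The second inequality comes from the following observations. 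The diagonal blocks of $T_tY$ are nonnegative combinations of $Y_{11}$ and $Y_{22}$. The off-diagonal block is $e^{-t}Y_{12}$. Hence each entry of $Q_r(T_tY)$ is bounded by the corresponding entry of $\Theta_{t,\sigma_n}(Q_r(Y))$, and we finish with the scalar $2\times2$ monotonicity fact. Applying that fact once more together with $Q_r(Y)\le Q_2(X)$, using that $\Theta$ is entrywise positive with nonnegative coefficients, gives $\norm{\Theta_{t,\sigma_n}(Q_2(X))}_{r,\sigma_n}$. The qubit corollary bounds this by $\norm{Q_2(X)}_{2,\sigma_n}$, and \eqref{eq:compression2} identifies the latter with $\norm X_{2,\tau_n^\varepsilon}$.

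The main obstacle is the middle step, and it needs checking carefully in two places. The first is that King's inequality applies with the state $\sigma_n$, which is in diagonal form up to the swap $U$. The second is that the entrywise monotonicity fact can be used for the weighted norm $\norm{\cdot}_{r,\sigma_n}$ on matrices that are only entrywise ordered. The paper notes that the weighted version holds because diagonal weight conjugation preserves entrywise order.
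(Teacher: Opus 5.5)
Your proof is correct and is essentially the paper's argument: induction through the factorization $\Phi_t^{(n)}=S_tT_t$, blockwise bounds from the induction hypothesis (in its general form via Lemma~\ref{hc:positive-extension}) and the centralizing unitary $u_n$, King's inequality, the $2\times 2$ entrywise monotonicity, the qubit $2\to r$ corollary, and the $s=2$ compression identity. The differences are cosmetic. You split the entrywise comparison into $Q_r(Y)\le Q_2(X)$ and $Q_r(T_tY)\le\Theta_{t,\sigma_n}(Q_r(Y))$, whereas the paper bounds the blocks of $S_tT_t(X)$ by $\Theta_{t,\sigma_n}(Q_X)$ in one step. Your garbled base-case parameter should read $h_1=2\log\mu_1$, so that $2h_1=\log\mu_1^4\le\log\Lambda=2h$. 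Your concern about the swap is moot, since $\sigma_n=\operatorname{diag}(a_n,b_n)$ is already diagonal.
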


\begin{proof}
The one-mode case follows from Corollary \ref{qubit:HC}, since $B_2=1$ and $B_r(4\log\mu_1)\le B_r(\Lambda)$ by Lemma~\ref{lem:monotone}.

We shall prove by induction.
Assume the result for $n-1$ modes and initially take
$X=\begin{pmatrix}X_0&B\\B^*&D\end{pmatrix}\geq0$.  
Write
\begin{equation}
Q_X=
 \begin{pmatrix}
  \|X_0\|_{2,\tau_{n-1}^{\varepsilon}}&\|B\|_{2,\tau_{n-1}^{\varepsilon}}\\
  \|B\|_{2,\tau_{n-1}^{\varepsilon}}&\|D\|_{2,\tau_{n-1}^{\varepsilon}}
 \end{pmatrix}\geq0.
 \label{spin:CX}
\end{equation}
Denote the four nonnegative diagonal coefficients in \eqref{spin:T} by $q_{00},q_{01},q_{10},q_{11}$. 
By induction and the triangle inequality, the first diagonal block of $S_tT_t(X)$ satisfies
\begin{equation}
 \left\|q_{00}\Phi_t^{(n-1)}(X_0)+q_{01}\Phi_t^{(n-1)}(D)\right\|_{r,\tau_{n-1}^{\varepsilon}}
 \leq q_{00}\|X_0\|_{2,\tau_{n-1}^{\varepsilon}}+q_{01}\|D\|_{2,\tau_{n-1}^{\varepsilon}},
 \label{spin:diagonalbound}
\end{equation}
and likewise for the other diagonal block.  
Since $u_n$ centralizes $\tau_{n-1}^{\varepsilon}$, left and right multiplication by $u_n$ are isometries.  
Consequently the off-diagonal block satisfies
\begin{align}
& \|e^{-t} \Phi_t^{(n-1)}(Bu_n)u_n\|_{r,\tau_{n-1}^{\varepsilon}}
 =e^{-t}\|\Phi_t^{(n-1)}(Bu_n)\|_{r,\tau_{n-1}^{\varepsilon}}\notag \\
 \leq & e^{-t}\|Bu_n\|_{2,\tau_{n-1}^{\varepsilon}}
 =e^{-t}\|B\|_{2,\tau_{n-1}^{\varepsilon}}.
 \label{spin:offdiagonalbound}
\end{align}
Thus the positive compression matrix is bounded entry by entry by
$\Theta_{t,\sigma_n}(Q_X)$. 
Hence by the argument before the proposition, we have that
\begin{align}
 \|\Phi_t^{(n)}(X)\|_{r,\sigma_n\otimes\tau_{n-1}^{\varepsilon}}
 &\leq\|\Theta_{t,\sigma_n}(Q_X)\|_{r,\sigma_n}\notag\\
 &\leq\|Q_X\|_{2,\sigma_n}
 =\|X\|_{2,\sigma_n\otimes\tau_{n-1}^{\varepsilon}}.
 \label{spin:induction}
\end{align}
The last equality is true, because the square of both can be written as 
\begin{equation}
 a_n\|X_0\|_{2,\tau_{n-1}^{\varepsilon}}^2+b_n\|D\|_{2,\tau_{n-1}^{\varepsilon}}^2
       +2\sqrt{a_nb_n}\,\|B\|_{2,\tau_{n-1}^{\varepsilon}}^2.
 \label{spin:HSequality}
\end{equation}
Applying Lemma~\ref{hc:positive-extension}, we see the desired inequality is true for all $X\in \Gamma_{\langle 1, \ldots, n\rangle}$ and this completes the proof of the proposition.
\end{proof}

\begin{remark}
Lee–Ricard formulate their estimates using right embeddings. Interpolation between the right and left embedding inequalities, followed by KMS duality, gives a sufficient bound for the symmetric norms used here. 
Proposition \ref{spin:finiteHC} gives the sharp corresponding \(2\to r\) bound.
\end{remark}

\subsection{Hypercontractivity on \texorpdfstring{$q$}{q}-Araki--Woods algebras}
\label{spin:CLTsection}

In this section, we shall apply Speicher's central-limit procedure.
Following the lines in \cite[Section 4]{LR}, we obtain generalized baby gaussians $\gamma_{i, j}$ associated with the parameter $\lambda_i=\mu_i^4$ for $1\leq i \leq n$ and $1\leq j \leq m$.
Let $\Gamma_{n,m}$ be the von Neumann algebra generated by $\gamma_{i,j}$ and let $\tau_{m}^\varepsilon$ be the product state.
Denote by $D_m$ the density of $\tau_{m}^\varepsilon$.
Let $\displaystyle s_{i,m}=\frac{1}{\sqrt{m}}\sum_{j=1}^m \gamma_{i, j}$.

Let $\mathcal R$ be the Groh–Raynaud ultraproduct of the baby-Fock models $(\Gamma_{n,m},\tau_{m}^{\varepsilon})$, let $\omega=(\tau_{m}^{\varepsilon})_\mathcal U$,
and let $e=s(\omega)$ be its support. 
Nou's embedding identifies $(\Gamma_q,\varphi)$ with a modularly invariant, expected subalgebra of $(e\mathcal R e,\omega)$; 
denote this embedding by $\iota$.
Nou's Lemma 4.1 ensures that $e$ commutes with the algebra generated by the bounded cutoff representatives of the Gaussian generators.
In particular compression to $e$ is multiplicative on that algebra.

For a fixed $t$, the maps $\Phi_t^{(m)}$ induce a normal unital
completely positive map $\mathcal Q_t$ on $\mathcal R$. 
One obtains it by taking the adjoint of the ultraproduct of the predual maps;
on uniformly bounded representatives it is given by
\[
 \mathcal Q_t([Z_m]_{\mathcal U})
       =[\Phi_t^{(m)}(Z_m)]_{\mathcal U}.
\]
It preserves $\omega$ and is GNS-symmetric with respect to $\omega$, even though $\omega$ need not be faithful on $\mathcal R$.

For any $z\in \mathcal R$ with $ze=0$, we have that 
\[
 \omega(\mathcal Q_t(z)^*\mathcal Q_t(z))
 \leq\omega(\mathcal Q_t(z^*z))=\omega(z^*z)=0,
\]
so $\mathcal Q_t(z)e=0$.  If $b=(1-e)be$, then $b^*e=0$;
applying the preceding observation to $b^*$ gives
$e\mathcal Q_t(b)=0$.  
For $a\in\{e\}'$, take $b=(1-e)\mathcal Q_t(a)e$.  
The GNS symmetry gives
\[
 \omega(b^*b)
  =\omega(\mathcal Q_t(a)^*b)
  =\omega(a^*\mathcal Q_t(b))=0,
\]
where the last equality uses $a e=e a$ and
$e\mathcal Q_t(b)=0$.  Faithfulness on $e\mathcal R e$ gives
$(1-e)\mathcal Q_t(a)e=0$.  Applying the same argument to $a^*$
gives $e\mathcal Q_t(a)(1-e)=0$.  Thus
$\mathcal Q_t(\{e\}')\subseteq\{e\}'$ in $\mathcal R$.

The embedding $\iota$ intertwines the semigroups $\Phi_t$ and $\mathcal{Q}_t$:
\[
 \overline{\mathcal Q}_t\circ\iota
 =\iota\circ\Phi_t,
 \qquad
 \overline{\mathcal Q}_t(z)=e\mathcal Q_t(z)e.
\]
This follows from the approximation results of
Lee--Ricard~\cite[Proposition 4.2, proof of Lemma 4.3,
and Lemma 4.5]{LR}, together with the GNS symmetry of
the finite-model and limiting semigroups.

Raynaud's isometries and their product compatibility give, for
every $s>0$ and every uniformly bounded self-adjoint lift
$a=[Z_m]_{\mathcal U}\in\{e\}'$ with $eae=\iota(Z)$,
\begin{align}\label{eq:raynaudnorm}
 \lim_{m,\mathcal U}\|Z_m\|_{s,\tau_m^{\varepsilon}}
     =\|Z\|_{s,\varphi}.
\end{align}
Indeed, $[D_m^{1/(2s)}]_{\mathcal U}$ corresponds to $D_\omega^{1/(2s)}$ in $L^{2s}(\mathcal R)$, and the symmetric weighted products correspond to $D_\omega^{1/(2s)}\iota(Z)D_\omega^{1/(2s)}$.
The expected, state-preserving embedding identifies this with the
canonical isometric image of the symmetric density of $Z$.
This uses Raynaud's Theorems 3.6 and 5.1 in \cite{Raynaud} for every $s>0$.

\begin{lemma}\label{lem:approximation}
For any $*$-polynomial $Q$ in $n$ non-commuting variables and $2\leq r <\infty$ and every $t\geq 0$, we have that
\begin{align*}
    \lim_{m, \mathcal U} \left\| D_m^{\frac{1}{2r}} \Phi_t^{(m)}(Q(s_{1, m}, \ldots, s_{n,m})) D_m^{\frac{1}{2r}} \right\|_r
    =\left\| D_\varphi^{\frac{1}{2r}} \Phi_t(Q(g_{q, 1}, \ldots, g_{q,n})) D_\varphi^{\frac{1}{2r}} \right\|_{L^r(\cM)}.
\end{align*}
\end{lemma}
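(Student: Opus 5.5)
The plan is to reduce the statement to the norm-transfer identity \eqref{eq:raynaudnorm} applied to a suitable lift. Write $Z_m=\Phi_t^{(m)}(Q(s_{1,m},\ldots,s_{n,m}))$ and $Z=\Phi_t(Q(g_{q,1},\ldots,g_{q,n}))$. Since $\|D_m^{1/(2r)}Z_mD_m^{1/(2r)}\|_r=\|Z_m\|_{r,\tau_m^\varepsilon}$ and the right-hand side is $\|Z\|_{r,\varphi}$, it suffices to show three things. First, $(Z_m)$ is uniformly bounded, or can be replaced by uniformly bounded representatives without changing the ultralimit of the norms. Second, the class $a=[Z_m]_{\mathcal U}$ lies in $\{e\}'$. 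Third, $eae=\iota(Z)$. Self-adjointness, which \eqref{eq:raynaudnorm} is stated for, is handled by splitting $Q=Q_1+iQ_2$ with $Q_1,Q_2$ self-adjoint polynomials. I will instead apply the identity with $2\times2$ self-adjoint dilations $\begin{pmatrix}0&Z\\Z^*&0\end{pmatrix}$ against $\mathrm{diag}(\varphi,\varphi)$, whose symmetric norm equals $2^{1/r}\|Z\|_{r,\varphi}$. All the maps involved commute with this amplification.

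\textbf{Step 1 (boundedness).} The generators $s_{i,m}$ are not uniformly bounded in $m$. Following Nou and Lee--Ricard, I replace each $s_{i,m}$ by a bounded cutoff $f_R(s_{i,m})$, with $f_R$ a continuous truncation at a level $R$ exceeding the norm of the limiting generalized $q$-Gaussian. By the central limit theorem, the $*$-distributions of $(s_{i,m})$ converge to those of $(g_{q,i})$, so $\|s_{i,m}-f_R(s_{i,m})\|_{s,\tau_m^\varepsilon}\to0$ along $\mathcal U$ for every $s$. Each $\Phi_t^{(m)}$ is $\|\cdot\|_{r}\to\|\cdot\|_r$ contractive, which follows from the GNS-symmetric contraction property, so the error passes through the semigroup. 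The error in the polynomial is controlled by a H\"older estimate in the symmetric norms together with uniform bounds on all moments. Hence both sides may be computed with the cutoff representatives, and $a_0=[Q(f_R(s_{\cdot,m}))]_{\mathcal U}$ is a genuine element of $\mathcal R$.

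\textbf{Step 2 (commutation with $e$ and identification).} By Nou's Lemma~4.1, $a_0$ belongs to the algebra generated by the cutoff Gaussian classes, so $a_0\in\{e\}'$. Compression by $e$ is multiplicative there, which gives $ea_0e=Q(\iota(g_{q,\cdot}))=\iota(Q(g_{q,\cdot}))$. Since $\mathcal Q_t$ is computed on bounded representatives, $a=\mathcal Q_t(a_0)=[\Phi_t^{(m)}(Q(\dots))]_{\mathcal U}$. The invariance $\mathcal Q_t(\{e\}')\subseteq\{e\}'$ established above then gives $a\in\{e\}'$. Because $a_0$ commutes with $e$, we have $eae=e\mathcal Q_t(ea_0e)e=\overline{\mathcal Q}_t(\iota(Q(g)))$, and the intertwining relation turns this into $\iota(\Phi_t(Q(g)))=\iota(Z)$. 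Applying \eqref{eq:raynaudnorm} with $s=r$ then finishes the proof.

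\textbf{Main obstacle.} The delicate point is Step 1: justifying that cutting off the unbounded $s_{i,m}$ does not change the ultralimit of the $L^r$ norms in the non-tracial weighted setting. I would first try to use the uniform operator-norm bounds on generalized baby Gaussians from Lee--Ricard, Section~4. If these are available, the $s_{i,m}$ are uniformly bounded and no cutoff is needed. Failing that, I would control the cutoff error in the $2r$-norm through moment convergence, and conclude with H\"older's inequality and the contractivity of $\Phi_t^{(m)}$ on $L^r$.
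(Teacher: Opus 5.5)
Your Step 2 is sound given the infrastructure the paper sets up; the paper lifts bounded inputs the same way in Theorem~\ref{thm:reverse}. The gap is in Step 1, which carries the entire analytic content of the lemma, and the justification you give for it does not work. The lemma concerns the raw representatives $\Phi_t^{(m)}(Q(s_m))$, while \eqref{eq:raynaudnorm} only applies to uniformly bounded lifts. So you must prove $\lim_{m,\mathcal U}\norm{Q(s_m)-Q(\tilde s_m)}_{r,\tau_m^{\varepsilon}}=0$ for cut-off letters $\tilde s_{i,m}$.

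Two smaller points first. The element $s_{i,m}$ is not normal, so $f_R(s_{i,m})$ is undefined and must be replaced by $h_C(\Re s_{i,m})+i\,h_C(\Im s_{i,m})$. Also, no uniform operator-norm bound on $s_{i,m}$ is available; that is why Nou and Lee--Ricard introduce cutoffs at all, so your first fallback does not apply.

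The main problem is the tool you invoke, a H\"older inequality for symmetric norms of products. It is false for a non-tracial state. Take $\sigma=\operatorname{diag}(a,b)$ on $M_2(\CC)$, $x=e_{12}$, $y=e_{21}$, and $1/r=1/r_1+1/r_2$. Then $\norm{x}_{r_1,\sigma}\norm{y}_{r_2,\sigma}=(ab)^{1/(2r)}$, while $\norm{xy}_{r,\sigma}=a^{1/r}$, which is larger when $a>b$. The loss comes from moving densities past letters. For raw letters, which are modular eigenoperators, this can be tracked exactly. The cut-off letters have no controlled analytic extension, so telescoped words mixing $s$ and $\tilde s$ cannot be estimated this way. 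Similarly, convergence of $*$-distributions controls state moments, not symmetric $L^r$ norms, so even your single-letter claim needs an argument.

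The missing idea is the one the paper's proof is built on. Since $s_{i,m}$ is an exact modular eigenoperator with eigenvalue $\lambda_i^{it}$, one has $D_m^{1/(2s)}Q(s_m)D_m^{1/(2s)}=Q^{[s]}(s_m)D_m^{1/s}$ for the rescaled polynomial $Q^{[s]}$, and likewise for $g$. Symmetric norms of raw polynomials thus become right-embedded norms of another fixed polynomial. Their convergence is Lee--Ricard's Lemma~4.3 and Remark~4.4, whose proof contains the cutoff comparison.

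For the semigroup, the paper proceeds as follows:
\begin{itemize}
\item It writes $\Phi_t(Q(g))=R_t(g)$ via the Wick decomposition.
\item It proves $\Phi_t^{(m)}(Q(s_m))-R_t(s_m)\to0$ only in $L^2$. There the squared symmetric norm of $y$ is at most $\varphi(y^*y)^{1/2}\varphi(yy^*)^{1/2}$, so only state moments enter.
\item It gets a uniform $L^u$ bound for some $u>r$ from $L^u$-contractivity and the polynomial convergence.
\item It then interpolates.
\end{itemize}

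Your route can be repaired the same way:
\begin{itemize}
\item Compare $Q(s_m)$ with $Q(\tilde s_m)$ only in $L^2$, using the cutoff comparison in Lee--Ricard's proof.
\item Bound $\norm{Q(s_m)}_{u,\tau_m^{\varepsilon}}$ uniformly via the modular translation and moments.
\item Use the operator-norm bound $\norm{Q(\tilde s_m)}\le C$.
\item Interpolate, push the difference through $\Phi_t^{(m)}$ by contractivity, and finish with your Step 2.
\end{itemize}
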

\begin{proof}
Let $\lambda_i=\mu_i^4$ and $s_m=(s_{1,m}, \ldots, s_{n,m})$, $g=(g_{q,1}, \ldots, g_{q,n})$.
For a polynomial $Q$ and a fixed $s>0$, let $Q^{[s]}$ be obtained by making the substitutions
\[
 z_i\longmapsto\lambda_i^{1/(2s)}z_i,
 \qquad z_i^*\longmapsto\lambda_i^{-1/(2s)}z_i^*.
\]
This is a fixed polynomial, although the substitution is not a $*$-homomorphism. Modular covariance gives the exact identities
\begin{align}\label{eq:modtranslate}
 D_m^{1/(2s)}Q(s_m)D_m^{1/(2s)}
 &=\sigma_{-i/(2s)}^{\tau_{m}^{\varepsilon}}(Q(s_m))D_m^{1/s}
 =Q^{[s]}(s_m)D_m^{1/s},\notag\\
 D_\varphi^{1/(2s)}Q(g)D_\varphi^{1/(2s)}
 &=Q^{[s]}(g)D_\varphi^{1/s}.
\end{align}
Apply the right-embedding convergence to $Q^{[s]}$. 
Since the number maps commute with the modular automorphisms, the identical argument applies after those maps. Thus for $s\ge1$ we have
\begin{align}\label{eq:normCLT}
 \lim_{m,\mathcal U}\norm{Q(s_m)}_{s,\tau_{m}^{\varepsilon}}
 &=\norm{Q(g)}_{s,\varphi},\notag\\
 \lim_{m,\mathcal U}\norm{\Phi_t^{(m)}(Q(s_m))}_{s,\tau_{m}^{\varepsilon}}
 &=\norm{\Phi_t(Q(g))}_{s,\varphi}.
\end{align}
The first line for every finite $s\ge1$ follows from Lee--Ricard's Lemma 4.3 and Remark 4.4. 
To spell out the additional interpolation step, write $\Phi_t(Q(g))=R_t(g)$ using the finite Wick decomposition of the fixed polynomial $Q$. 
By the preceding semigroup intertwining and the cutoff comparison in the proof of \cite[Lemma 4.3]{LR}, applied to $Q$ and $R_t$, we obtain
\[
 Z_m=\Phi_t^{(m)}(Q(s_m))-R_t(s_m),\qquad
 \lim_{m,\mathcal U}\norm{Z_m}_{2,\tau_{m}^{\varepsilon}}=0.
\]
For any prescribed $r>2$, choose $u>r$. 
Fixed-exponent $L^u$ contractivity of $\Phi_t^{(m)}$ and the first line of~\eqref{eq:normCLT} give a uniform bound on $\norm{Z_m}_{u,\tau_{m}^{\varepsilon}}$. 
If $1/r=(1-\theta)/2+\theta/u$, symmetric Kosaki interpolation yields
\[
 \norm{Z_m}_{r,\tau_{m}^{\varepsilon}}
 \le\norm{Z_m}_{2,\tau_{m}^{\varepsilon}}^{1-\theta}
       \norm{Z_m}_{u,\tau_{m}^{\varepsilon}}^{\theta}\longrightarrow0.
\]
For $1\le r\le2$, monotonicity of the symmetric state norms gives the conclusion directly from $L^2$.
Thus the second line holds at every finite exponent needed here. 
Restricting the cited semigroup approximation to its initially stated range would not suffice.

\end{proof}

\begin{theorem}\label{thm:cross2r}
Suppose that $\{\Phi_t\}_{t\geq 0}$ is a $q$-Araki-Woods Ornstein-Uhlenbeck semigroup with $\|A\|<\infty$.
Then for $2\leq r<\infty$, 
    \begin{equation}
 \|\Phi_t(x)\|_{r,\varphi}\leq\|x\|_{2,\varphi},\qquad
 e^{-2t}\leq \frac{1}{B_r(\beta)}.
 \label{spin:AWsufficiency}
\end{equation}
\end{theorem}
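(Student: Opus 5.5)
The plan is to transfer Proposition~\ref{spin:finiteHC} to $\Gamma_q$ through the central-limit approximation of Lemma~\ref{lem:approximation}, and then to pass from polynomials to all of $\Gamma_q$ by density.

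\emph{Reduction.} By the discretization procedure recalled in Section~2, I would assume $A$ is almost periodic. Work in the spatially isomorphic model $\Gamma_q^\mu$, where the eigenvalues satisfy $1\le\mu_j^4\le\|A\|$. I would then set $\Lambda=\|A\|$, so that $2h=\log\Lambda=\beta$, and the time condition \eqref{spin:time} becomes exactly $e^{-2t}\le B_r(\beta)^{-1}$. By Lemma~\ref{lem:monotone}, $B_r$ is nondecreasing for $r\ge2$. Hence the hypothesis of Proposition~\ref{spin:finiteHC} holds uniformly for every finite subfamily of modes, for every choice of signs, and for every number of copies. In particular it holds on $\Gamma_{n,m}$ with state $\tau_m^\varepsilon$, since the $nm$ modes $\gamma_{i,j}$ have parameters $\lambda_i\le\Lambda$.

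\emph{Polynomials.} Fix $n$ and a $*$-polynomial $Q$, and put $X_m=Q(s_{1,m},\dots,s_{n,m})\in\Gamma_{n,m}$. Proposition~\ref{spin:finiteHC} gives
\[
\|\Phi_t^{(m)}(X_m)\|_{r,\tau_m^\varepsilon}\le\|X_m\|_{2,\tau_m^\varepsilon}.
\]
Taking the limit along $\mathcal U$, Lemma~\ref{lem:approximation} identifies the left side with $\|\Phi_t(Q(g))\|_{r,\varphi}$. The first line of \eqref{eq:normCLT} with $s=2$ identifies the right side with $\|Q(g)\|_{2,\varphi}$. This proves \eqref{spin:AWsufficiency} for $x$ in the $*$-algebra $\mathcal P$ of polynomials in the $g_{q,k}$, since each such $x$ involves finitely many $k$.

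\emph{Extension to all of $\Gamma_q$.} This is the step I expect to be the main obstacle. Given $x\in\Gamma_q$, Kaplansky density gives a net $x_\alpha\in\mathcal P$ with $\|x_\alpha\|\le\|x\|$ and $x_\alpha\to x$ strongly. Then $\bj_\varphi^{1/2}(x_\alpha)\to\bj_\varphi^{1/2}(x)$ in $L^2$, because $\|x\|_{2,\varphi}$ is the GNS norm of $\Delta^{1/4}x\Omega$. To see this I would use strong$^*$ convergence and the Tomita structure. Since $x\Omega$ and $x^*\Omega$ both converge, $\Delta^{1/2}x\Omega$ converges, and interpolation gives convergence of $\Delta^{1/4}x\Omega$.

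For the left side, $\Phi_t(x_\alpha)\to\Phi_t(x)$ $\sigma$-weakly with uniformly bounded norms. Hence $\bj_\varphi^{1/r}(\Phi_t(x_\alpha))$ converges weakly in $L^r$: pair against $D_\varphi^{1/(2r')}yD_\varphi^{1/(2r')}$, which is dense in $L^{r'}$, and use uniform boundedness. The $L^r$ norm is weakly lower semicontinuous, so
\[
\|\Phi_t(x)\|_{r,\varphi}\le\liminf_\alpha\|\Phi_t(x_\alpha)\|_{r,\varphi}\le\lim_\alpha\|x_\alpha\|_{2,\varphi}=\|x\|_{2,\varphi}.
\]
One further point needs care: Lemma~\ref{lem:approximation} is stated for $r\ge2$ and gives only the limit of norms. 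That suffices, because the finite-model inequality passes to the ultralimit directly. If strong convergence of $x_\alpha^*$ is needed, I would take a strong$^*$-approximating net, which is available for self-adjoint parts. Alternatively, I would use positivity and apply Lemma~\ref{hc:positive-extension}-type block matrix factorization to reduce to $x\ge0$.
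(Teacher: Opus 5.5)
Your proposal is correct and follows the paper's own route. You apply Proposition~\ref{spin:finiteHC} with $2h=\beta$ to the central-limit representatives, pass to the ultralimit via Lemma~\ref{lem:approximation} and the $s=2$ line of \eqref{eq:normCLT}, and then use Kaplansky density. Your weak lower semicontinuity argument in $L^r$ is a more explicit version of the paper's one-line density step. Plain strong convergence of a bounded net already suffices on the $L^2$ side, since $\|\Delta_\varphi^{1/4}\eta\|^2\le\|\Delta_\varphi^{1/2}\eta\|\,\|\eta\|$ and $\|\Delta_\varphi^{1/2}(x_\alpha-x)\Omega\|=\|(x_\alpha-x)^*\Omega\|\le2\|x\|$.

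The one step you only invoke is the reduction from general bounded $A$ to the almost-periodic case. The paper carries this out inside the proof by choosing odd step functions $f_m$ of $\log A$ with $|f_m|\le\beta$. This gives $\|A_m\|\le e^{\beta}$, so the same constant $B_r(\beta)$ applies. The paper also checks that Nou's embedding is compatible with the number semigroup and with the symmetric $L^s$ norms.
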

\begin{proof}
For finitely many modular eigenmodes, apply Proposition~\ref{spin:finiteHC} to every polynomial representative and pass to the ultralimit using~\eqref{eq:normCLT}, with $s=2$ and $s=r$. 
All constants depend only on the common upper bound for $\mu_i^4$.

For infinitely many almost-periodic modes, use increasing finite-mode algebras and their vacuum-preserving conditional expectations. 
By the fact that polynomials in finitely many modes are dense in the $L^2(\cM)$ space and the Kaplansky density theorem, we see that the hypercontractivity holds.

For general bounded $\widetilde{A}=\log A$, choose real odd finite-valued step functions $f_m$ such that $|f_m|\le\beta$ and $f_m\to\id$ uniformly on $[-\beta,\beta]$. 
Then $\widetilde{A}_m=f_m(\widetilde{A})$ has sign symmetry; $A_m$ has reciprocal spectral symmetry, $A_m=e^{\widetilde{A}_m}$ is almost-periodic, and $\norm{A_m}\le e^\beta$. 
The covariance forms converge, and the real field representatives and their imaginary modular translates have uniform operator bounds. 
These are the hypotheses in Nou's discretization and embedding argument~\cite[Sections 6.1--6.2]{Nou06}.
Its state-preserving embedding is compatible with number second quantization:
the Wick expansion coefficients of a fixed polynomial are finite expressions in the converging covariances, so damping each Wick degree before or after passage to the ultraproduct gives representatives whose difference tends to zero in operator norm.
Raynaud's product-compatible $L^s$ embeddings then transfer the uniform estimate to the symmetric norms. 

\end{proof}

Suppose that $\cM$ is a von Neumann algebra with a normal faithful state $\varphi$.
Let $y\in \cM$ be strictly positive and $y^{-1}$ is bounded, $a\in \cM$ with
$\norm a<1$. 
Put
\[
 y_a=y^{1/2}(1+a)y^{1/2},\qquad
 \Lambda_y(a)=\log y_a-\log y,
 \qquad
 \mathbf K_y(a)=\int_0^\infty y_t a y_t\,dt,
\]
where $y_t=y^{1/2}(y+t)^{-1}$. 
The principal logarithm is used when $a$ is not self-adjoint. 
The resolvent identity gives
\[
 (y+t)^{-1}-(y_a+t)^{-1} =y_t a (1+y^{1/2}y_t a)^{-1}y_t.
\]
Note that for $v_t\in \cM$,
\begin{align*}
\norm{\int_0^\infty y_t v_t y_t\,dt}\le\sup_t\norm{v_t}.
\end{align*}
Integrating the resolvent identity and using the Neumann series, we have that
\begin{equation}\label{eq:logremainder0}
 \norm{\Lambda_y(a)}\le\frac {\norm a}{1-\norm a},
 \qquad
 \norm{\Lambda_y(a)-\mathbf K_y(a)}\le\frac{\norm a^2}{1-\norm a}.
\end{equation}

The same logarithm estimates hold when $y$ is an injective positive operator affiliated with a von Neumann algebra $\mathcal M$ and
$a=a^*\in\mathcal M$ with $\|a\|<1$.  
Define $y_a$ by the closed positive form
\[
 \langle y_a\xi,\xi\rangle
 =\langle(1+a)y^{1/2}\xi,y^{1/2}\xi\rangle,
 \qquad \xi\in\operatorname{dom}(y^{1/2}).
\]
In this formula the left side denotes the associated quadratic form.
Put $y_t=y^{1/2}(y+t)^{-1}$ and $\widetilde y_t=y(y+t)^{-1}$.
Factoring the closed form of $y_a+t$ through $(y+t)^{1/2}$ gives
\[
 (y+t)^{-1}-(y_a+t)^{-1}
 =y_ta (1+\widetilde{y}_t a)^{-1}y_t.
\]
Since $0\le \widetilde{y}_t\le1$ and
$\displaystyle \int_0^\infty y_t^2\,dt=1$ strongly, the weak integrals
\[
 \mathbf K_y(a)=\int_0^\infty y_ta y_t\,dt,
 \qquad
 w_a=\int_0^\infty y_ta (1+\widetilde{y}_t a )^{-1}y_t\,dt
\]
are bounded. 
Their truncated integrals also converge strongly: for any uniformly bounded family $v_t$ and any measurable $E\subset(0,\infty)$,
\[
 \left\|\int_E y_t v_t y_t\xi\,dt\right\|
 \le\sup_t\|v_t\|
      \left(\int_E\|y_t\xi\|^2\,dt\right)^{1/2}.
\]
Apply this to the two tails as their endpoints tend to $0$ and $\infty$.
The bounds on the middle factors give
\[
 \|w_a\|\le\frac {\|a\|}{1-\|a\|},\qquad
 \|w_a-\mathbf K_y(a)\|\le\frac{\|a\|^2}{1-\|a\|}.
\]
To identify $w_a$ with the logarithmic difference, use the bounded real functions
\[
 g_{\epsilon,R}(s)=\int_\epsilon^R
          \left(\frac1{1+t}-\frac1{s+t}\right)dt.
\]
Then
\[
 g_{\epsilon,R}(y_a)-g_{\epsilon,R}(y)
 =\int_\epsilon^R y_ta (1+\widetilde{y}_t a)^{-1}y_t\,dt
 \longrightarrow w_a
\]
strongly, with a uniform operator-norm bound. 
Since $g_{\epsilon,R}(s)\to\log s$ for every $s>0$, the spectral theorem
gives strong resolvent convergence of $g_{\epsilon,R}(y)$ and $g_{\epsilon,R}(y_a)$ to $\log y$ and $\log y_a$.
Stability under uniformly bounded, strongly convergent self-adjoint perturbations therefore implies
\[
 \log y_a=\log y+w_a,
 \qquad\operatorname{dom}\log y_a=\operatorname{dom}\log y.
\]
Thus $\Lambda_y(a)=w_a$ is indeed the bounded logarithmic difference.
For $y=D_\varphi$ in the Haagerup crossed product and $a\in \cM$,
the dual action scales both densities by the same scalar.  
Their logarithmic difference is therefore fixed by the dual action and belongs
to $\cM$.

\begin{lemma}\label{lem:hessian}
Let $a=a^*$ be bounded and centered, and suppose its modular spectral support is compact. 
Set $\zeta=\Delta_\varphi^{1/4}a\Omega$. 
For every $0<s<\infty$, as $\varepsilon\downarrow 0$, we have that
\begin{equation}\label{eq:hessian}
 \norm{1+\varepsilon a}_{s,\varphi}^2
 =1+\varepsilon^2\ip{\zeta}{B_s(\log\Delta_\varphi)\zeta}+o(\varepsilon^2).
\end{equation}

For $0<s<1$, use the positive Haagerup quasi-norm from the same formula~\eqref{main:norm} and extend the hyperbolic definition by $s'=s/(s-1)$ and $B_s(0)=s-1$. 

At $s=1$ put $B_1=0$; the positive-input norm is exactly $\varphi(1+\varepsilon a)=1$.
\end{lemma}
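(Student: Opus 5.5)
The plan is to compute $\norm{1+\varepsilon a}_{s,\varphi}^s$ directly from the Haagerup density. Write
\[
\bj_\varphi^{1/s}(1+\varepsilon a)=D_\varphi^{1/s}+\varepsilon\, D_\varphi^{1/(2s)}aD_\varphi^{1/(2s)} .
\]
Since $a$ has compact modular spectral support, it is entire modular-analytic. Hence the perturbation factors as $D_\varphi^{1/(2s)}aD_\varphi^{1/(2s)}=D_\varphi^{1/(2s)}\,b\,D_\varphi^{1/(2s)}$, and also as $D_\varphi^{1/s}\,\sigma_{i/(2s)}(a)$, with bounded $\sigma_z(a)$. This puts us in the setting of the bounded logarithm estimates just established, with $y=D_\varphi^{1/s}$ and the relative perturbation $\varepsilon a$, where $\norm{\varepsilon a}<1$ for small $\varepsilon$.

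\textbf{Step 1 (reduction to logarithms).} I would use
\[
\log y_{\varepsilon a}=\log D_\varphi^{1/s}+\Lambda_y(\varepsilon a), \qquad \Lambda_y(\varepsilon a)=\varepsilon\mathbf K_y(a)+O(\varepsilon^2)
\]
in operator norm, by \eqref{eq:logremainder0}. Then $(y_{\varepsilon a})^{s}=\exp\bigl(s\log y_{\varepsilon a}\bigr)$, and the trace $tr$ of this power is what we must expand to order $\varepsilon^2$.

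\textbf{Step 2 (second-order expansion in the spectral picture).} A cleaner route is to expand in the standard $L^2$ picture, where everything is diagonalized by $\log\Delta_\varphi$. Since $\zeta$ has compact spectral support in $[-R,R]$, all functional calculus is on a bounded interval. The first-order term is $s\varepsilon\, tr(D_\varphi\, a)=s\varepsilon\varphi(a)=0$ because $a$ is centered. The second-order term is given by the Daleckii--Krein divided-difference formula for $x\mapsto x^s$ at $D_\varphi^{1/s}$. Concretely, I would compute the second derivative $\frac{d^2}{d\varepsilon^2}tr\bigl((D_\varphi^{1/s}+\varepsilon c)^s\bigr)$ with $c=D_\varphi^{1/(2s)}aD_\varphi^{1/(2s)}$. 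Evaluated on the spectral decomposition of the pair (left, right multiplication by $D_\varphi$), i.e.\ of $\Delta_\varphi$, this gives
\[
s\,\ip{\zeta}{\Psi_s(\log\Delta_\varphi)\zeta},
\]
where $\Psi_s(\lambda)$ is the two-point divided difference of $x^{s-1}$ at eigenvalues $e^{\pm\lambda/(2s)}$, rescaled. This mirrors exactly the qubit computation leading to \eqref{qubit:local-norm}, where the identity
\[
(s_+s_-)^{1/p}\,\frac{s_+^{1-1/p}-s_-^{1-1/p}}{s_+^{1/p}-s_-^{1/p}}=\frac{\sinh(h/p')}{\sinh(h/p)}\cdot\frac{1}{2\cosh h}
\]
converts the divided difference into $B_s$. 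The same identity with $e^{\lambda}$ in place of $s_+/s_-$ yields $\Psi_s=B_s$. Taking the $s$-th root and squaring gives the factor $2/s\cdot s/2=1$ in front of $\varepsilon^2$, which gives \eqref{eq:hessian}. For $0<s<1$ the same algebra holds with $s'<0$; $B_s$ is then negative but well defined. For $s=1$ it is trivial.

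\textbf{Main obstacle.} The hard part is justifying the second-order expansion rigorously in the type III Haagerup setting, where $D_\varphi$ is unbounded and $tr$ is not a genuine trace on bounded operators. To handle this, I would first reduce to the matrix/almost-periodic case: when $\zeta$ has compact spectral support, $a$ can be approximated by elements in the spectral subspaces. I would then write the Haagerup $L^s$ norm via Kosaki/Araki--Masuda interpolation, so that $\norm{1+\varepsilon a}_{s,\varphi}$ is expressed through $\Delta_\varphi$ acting on the bounded vector $\zeta$. Concretely, I would use the contour-integral (resolvent) representation of $x^s$, analogous to the $\mathbf K_y$ integral. That representation makes the first- and second-order terms explicit spectral integrals in $\log\Delta_\varphi$. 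The $o(\varepsilon^2)$ remainder is then controlled in operator norm by a cubic Neumann tail, as in \eqref{eq:logremainder0}.
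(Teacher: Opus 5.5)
There is a genuine gap. Your formal computation is right and agrees with the paper. The linear term is $s\varphi(a)=0$. The second derivative of $\varepsilon\mapsto tr\bigl((D^{1/s}+\varepsilon D^{1/(2s)}aD^{1/(2s)})^s\bigr)$ is $s\langle\zeta,B_s(\log\Delta_\varphi)\zeta\rangle$, via the divided difference of $x^{s-1}$ and the same $\sinh$ identity as in the qubit case. Raising to the power $2/s$ then gives coefficient $1$.

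The gap is in the analytic justification, which you flag as the main obstacle. In the type III setting that justification is the actual content of the lemma, and your sketch does not supply it.
\begin{enumerate}
\item Step 1 uses $\Lambda_y(\varepsilon a)=\varepsilon\mathbf K_y(a)+O(\varepsilon^2)$. An $O(\varepsilon^2)$ error in the logarithm enters $tr(e^{\log y+s\Lambda})$ linearly at order $\varepsilon^2$, so it cannot determine the Hessian.
\item Approximating $a$ by finite sums of modular eigen-components does not reduce anything to a matrix case. $D_\varphi$ is still unbounded and $\mathcal M$ is still type III, so Daleckii--Krein is not available.
\item An operator-norm bound on a cubic Neumann tail does not control the trace of the remainder. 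In Haagerup's picture, the individual resolvent pieces $y^{1/2}(y+t)^{-1}B(y+t)^{-1}y^{1/2}$ are not even elements of $L^1(\mathcal M)$.
\item In any approximation scheme, the cubic remainder must be bounded uniformly in $\|D\|$ and $\|D^{-1}\|$. You never obtain such a bound.
\item Araki--Masuda norms are variational over relative modular operators. They do not express $\|1+\varepsilon a\|_{s,\varphi}$ through $\Delta_\varphi$ acting on $\zeta$, except at the order you are trying to prove.
\end{enumerate}

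The missing idea is to use your Step 1 as a zeroth-order bound on a complex disc, not as a first-order expansion. Work first in a finite algebra with bounded invertible density $D$.
\begin{enumerate}
\item The function $F(z)=\tau\bigl((D^{1/s}+z\bj_D^{1/s}(a))^s\bigr)$ is holomorphic for $|z|\|a\|<1$, because its real part dominates $(1-|z|\|a\|)D^{1/s}$.
\item Write $F(z)=\tau(e^{\log D+sL(z)})$, where \eqref{eq:logremainder0} gives $\|L(z)\|\le|z|\|a\|/(1-|z|\|a\|)$.
\item The bound $|\tau(e^{\log D+v})|\le e^{\|v\|}$ (Dyson expansion plus H\"older) then gives $|F|\le e^s$ on the circle $|z|=1/(2\|a\|)$.
\item Cauchy estimates yield the cubic remainder $16e^s\|a\|^3|\varepsilon|^3$, which is independent of $D$.
\end{enumerate}
Then pass to general $\mathcal M$ by Haagerup reduction.
\begin{enumerate}
\item Set $a_n=\mathbb E_n(a)$. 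It stays self-adjoint, centered, bounded by $\|a\|$, and supported in the same compact modular band, since $\mathbb E_n$ commutes with the modular group.
\item At each fixed $\varepsilon$, $\|1+\varepsilon a_n\|_s$ converges: use H\"older for $s\le2$ and Kosaki interpolation for $s>2$.
\item The quadratic coefficients converge because $B_s$ is bounded on the common band.
\item The $D$-independent remainder carries the whole expansion to the limit.
\end{enumerate}
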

\begin{proof}
If $a=0$, the lemma holds. 
We assume that $a\neq 0$ in the following.
We use one uniform Taylor estimate for all $s>0$. First work in a finite
tracial von Neumann algebra $(\mathcal A,\tau)$, with $D\in \mathcal A$ strictly positive,  $D^{-1}$ bounded, $\tau(D)=1$, and $\varphi(x)=\tau(Dx)$. Put
\[
F(z)=\tau\left(\left(D^{1/s}+z\bj_D^{1/s}(a)\right)^s\right).
\]
For $|z|\norm a<1$,
$\operatorname{Re}(D^{1/s}+z\bj_D^{1/s}(a))\ge(1-|z|\norm a)D^{1/s}>0$, so the principal power and
$F$ are holomorphic. 
By~Inequalities \eqref{eq:logremainder0}, 
\[
 L(z)=\log(D^{1/s}+z\bj_D^{1/s}(a))-\log D^{1/s},
 \qquad \norm{L(z)}\le\frac{|z|\norm a}{1-|z|\norm a}.
\]
For every bounded, possibly non-self-adjoint $v$, the Duhamel expansion and
trace H\"older inequality give
\[
 \left|\tau (e^{\log D+v})\right|
 \le\sum_{n=0}^\infty\frac{\norm v^n}{n!}\tau(D)
 =e^{\norm v}.
\]
Since $F(z)=\tau \left(e^{\log D+sL(z)}\right)$, it follows that $|F(z)|\le e^s$
on $\displaystyle |z|=\frac{1}{2\norm a}$. 
Hence for $|\varepsilon|\norm a\le1/4$,
\begin{equation}\label{eq:relativeTaylorNorm}
 \left|F(\varepsilon)-F(0)-\varepsilon F'(0)
                   -\frac{\varepsilon^2}{2}F''(0)\right|
 \le16e^s\norm a^3|\varepsilon|^3.
\end{equation}
The constant is independent of the norm of $D$.

It remains to identify the quadratic coefficient. The trace derivative and
the divided difference of $x^{s-1}$ give
\[
 F(0)=1,\qquad F'(0)=s\varphi(a)=0,\qquad
 F''(0)=s\ip\zeta{B_s(\log\Delta_\varphi)\zeta}.
\]
Indeed, for spectral variables $d_j,d_k>0$ of $D$, changing from
$D^{1/(2s)}aD^{1/(2s)}$ to $D^{1/4}aD^{1/4}$ gives the symbol
\[
 (d_jd_k)^{1/s-1/2}
 \frac{d_j^{1-1/s}-d_k^{1-1/s}}{d_j^{1/s}-d_k^{1/s}}
 =\frac{\sinh((1-1/s)\log(d_j/d_k)/2)}
        {\sinh(\log(d_j/d_k)/(2s))}
 =B_s(\log(d_j/d_k)).
\]
The diagonal value is $s-1$. This calculation first applies to finite
spectral decompositions of $D$; 
then approximation in operator norm by spectral step functions proves it for bounded invertible $D$ in a finite von Neumann algebra.
Thus~\eqref{eq:relativeTaylorNorm}, followed by taking the power $2/s$,
proves~\eqref{eq:hessian} in this setting.

For a general $(\cM,\varphi)$, use the state-preserving Haagerup reduction $\cM\subset (\mathcal R, \widehat \varphi)$ and expectations $\mathbb E_n:\mathcal R\to\mathcal R_n$ onto increasing finite von Neumann algebras~\cite[Theorems 2.1 and 3.1]{HJX}.
The restricted reference states have bounded invertible densities relative to the finite traces. 
Set $a_n=\mathbb E_n(a)$. 
These elements are self-adjoint, centered, uniformly bounded by $\norm a$, and have modular spectral support in one fixed compact interval, because $\mathbb  E_n$ commutes with the modular group.
We have that $\bj_{\widehat \varphi}^{1/2}(a_n-a) \to 0$. 
The H\"older inequality and interpolation theorem~\cite{Kos84}, respectively, imply that
\[
 \norm{a_n-a}_{s,\widehat\varphi}
 \le
 \begin{cases}
  \norm{a_n-a}_{2,\widehat\varphi},&0<s\le2,\\[2pt]
  (2\norm a)^{1-2/s}\norm{a_n-a}_{2,\widehat\varphi}^{2/s},&s>2.
 \end{cases}
\]
Hence the $s$-norms (or quasi-norms) converge at every fixed
$\varepsilon$. 
The vectors $\bj_{\widehat \varphi}^{1/2}(a_n)$ converge in $L^2$, and
$B_s(\log\Delta_{\widehat\varphi})$ is bounded on their common compact
spectral support, so the quadratic coefficients converge as well.
The uniform remainder~\eqref{eq:relativeTaylorNorm} passes the whole
Taylor expansion to the limit. 
The expected inclusion of $\cM$ is isometric in every Haagerup $L^s(\cM)$ and intertwines the modular groups, which proves the lemma in $\cM$.
\end{proof}

For example, if $a=g+g^*$, $\sigma_t^\varphi(g)=\lambda^{it}g$ with $\lambda>1$, then $\varphi(gg^*)=\lambda \varphi(g^*g)$ and $\norm\zeta^2=2\sqrt\lambda \varphi(g^*g)$. 
Hence
\begin{equation}\label{eq:eigenhessian}
 \norm{1+\varepsilon(g+g^*)}_{s,\varphi}
 =1+\sqrt\lambda \varphi(g^*g)\,B_s(\log\lambda)\varepsilon^2+o(\varepsilon^2).
\end{equation}
Only second moments occur, which explains the absence of $q$ from the sharp obstruction.

Assume $\cH_{\RR}\ne\{0\}$, and recall that $J_{c}$ is the real conjugation on $\cH_{\CC}$. 
Write $\widetilde{A}=\log A$ and let $E_{\widetilde{A}}$ be its spectral measure.  
The identity $J_{c}AJ_{c}=A^{-1}$ gives
$J_{c} \widetilde{A}J_{c}=-\widetilde{A}$, hence
\[
 J_{c} E_{\widetilde{A}}(I)J_{c}=E_{\widetilde{A}}(-I)
\]
for every Borel set $I$ in $\RR$.  
In particular, the projection associated with $I\cup(-I)$ preserves $\cH_{\RR}$.
On every compact spectral interval the original Hilbert norm and the
deformed Hilbert norm are equivalent, so these spectral vectors are
legitimate vectors of the $q$-Fock one-particle space.

Suppose first that $0<\beta=\log\|A\|<\infty$.
For $0<\delta<\beta$, the projection
\[
 E_\delta=E_{\widetilde{A}}([\beta-\delta,\beta]\cup[-\beta,-\beta+\delta])
\]
is nonzero and preserves the real subspace. 
More explicitly, choose
$0\ne v_\delta\in E_{\widetilde{A}}([\beta-\delta,\beta])\cH_{\CC}$ and set
$\xi_\delta=v_\delta+ J_c v_\delta$. 
The summands lie in disjoint spectral bands, so $\xi_\delta\ne0$, and $J_c\xi_\delta=\xi_\delta$. 
Put
\[
 a_\delta=s_q(\xi_\delta)
 =\ell_q(\xi_\delta)+\ell_q(\xi_\delta)^*.
\]
This is a bounded, nonzero, centered, self-adjoint first-chaos element.
With the convention $U_t=A^{it}$, its modular action is
$\sigma_t^\varphi(s_q(\xi))=s_q(U_{-t}\xi)$, and the one-particle
modular operator is $A^{-1}$.  
Thus
\[
 \zeta_\delta=\Delta_\varphi^{1/4}a_\delta\Omega
 =A^{-1/4}\xi_\delta
\]
has the same two compact spectral bands for $\log\Delta_\varphi$.
In particular $a_\delta$ is modular-analytic.  One may see this directly
from its entire extension
\[
 z\longmapsto
 \ell_q(e^{-iz{\widetilde{A}}}\xi_\delta)
 +\ell_q(e^{-i\overline z{\widetilde{A}}}\xi_\delta)^*.
\]
Consequently, for every continuous even function $F$ on $[-\beta,\beta]$,
\[
 \frac{\langle\zeta_\delta,
       F(\log\Delta_\varphi)\zeta_\delta\rangle}
      {\|\zeta_\delta\|_2^2}
 \longrightarrow F(\beta).
\]

If $\beta=0$, then $A=1$ and any nonzero real vector gives a
self-adjoint first-chaos element with modular frequency zero.
If $\log A$ is unbounded, its spectrum is unbounded in both directions.
Choose compact intervals $I_n\subset[n,\infty)$ with $E_{\widetilde{A}}(I_n)\ne0$
and nonzero real vectors in
$E_{\widetilde{A}}(I_n\cup(-I_n))\cH_{\RR}$.  The associated fields have
compact modular spectral support contained in $\{|v|\ge n\}$.

Let
\begin{equation}\label{same:threshold}
 \tau(p,r)=\inf\big\{t\geq0:
       \|\Phi_t\|_{  p\to  r}\leq1\big\}.
\end{equation}
\begin{proposition}\label{prop:necessary}
 Suppose that $\{\Phi_t\}_{t\geq 0}$ is a $q$-Araki-Woods Ornstein-Uhlenbeck Semigroup with $\beta=\log\|A\|$.
 For $1< p< r< \infty$, 
 \begin{itemize}
 \item if $\beta<\infty$, then
 \begin{align*}
     \tau(p, r) \geq \frac{1}{2}\log\frac{B_r(\beta)}{B_p(\beta)},
 \end{align*}
\item if $\beta=\infty$, no finite hypercontractive time exists for any such pair.
\end{itemize}
\end{proposition}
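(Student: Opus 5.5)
The plan is to test the asserted contraction on the perturbations $1+\varepsilon a_\delta$ built just above from the spectral bands of $\widetilde A$ near $\pm\beta$, and to read off the threshold from the second-order expansion in Lemma~\ref{lem:hessian}. Suppose $\|\Phi_t\|_{p\to r}\le1$. Since $a_\delta$ is a centered first-chaos element, $\Phi_t(1+\varepsilon a_\delta)=1+\varepsilon e^{-t}a_\delta$.

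Applying \eqref{eq:hessian} with $s=r$ and $s=p$ and squaring the inequality $\|1+\varepsilon e^{-t}a_\delta\|_{r,\varphi}\le\|1+\varepsilon a_\delta\|_{p,\varphi}$ gives
\[
 1+\varepsilon^2e^{-2t}\ip{\zeta_\delta}{B_r(\log\Delta_\varphi)\zeta_\delta}+o(\varepsilon^2)
 \le 1+\varepsilon^2\ip{\zeta_\delta}{B_p(\log\Delta_\varphi)\zeta_\delta}+o(\varepsilon^2).
\]
Dividing by $\varepsilon^2$ and letting $\varepsilon\downarrow0$ yields
\[
 e^{-2t}\ip{\zeta_\delta}{B_r\zeta_\delta}\le\ip{\zeta_\delta}{B_p\zeta_\delta}.
\]
Here $B_s$ is extended evenly to negative arguments: the ratio of two hyperbolic sines is even, and Lemma~\ref{lem:hessian} indeed uses the symbol on $\log(d_j/d_k)$ of both signs. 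It is continuous on $[-\beta,\beta]$ and strictly positive for $s>1$.

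Next divide by $\|\zeta_\delta\|_2^2$ and let $\delta\downarrow0$. The spectral-concentration statement preceding the proposition gives $B_s$ evaluated at $\beta$ on both sides, so $e^{-2t}B_r(\beta)\le B_p(\beta)$. Hence every admissible $t$ satisfies $t\ge\frac12\log\frac{B_r(\beta)}{B_p(\beta)}$. The set of such $t$ is closed under increase (by contractivity of $\Phi_s$ on $L^r$), so its infimum $\tau(p,r)$ obeys the same bound. The case $\beta=0$ uses a frequency-zero field and $B_s(0)=s-1$ in the same way.

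For $\beta=\infty$ I take the fields with compact modular support in $\{|v|\ge n\}$ and obtain, for each $n$,
\[
 e^{-2t}\le\frac{\ip{\zeta_n}{B_p\zeta_n}}{\ip{\zeta_n}{B_r\zeta_n}}\le\sup_{|v|\ge n}\frac{B_p(v)}{B_r(v)}.
\]
The main point is the growth of this ratio. As $|v|\to\infty$,
\[
 B_s(v)\sim e^{|v|(1/s'-1/s)/2}=e^{|v|(1-2/s)/2},
\]
so $B_p/B_r\sim e^{-|v|(1/p-1/r)}\to0$ because $p<r$. To make the supremum estimate rigorous I would use Lemma~\ref{lem:monotone} in its even extension: $B_r/B_p$ is nondecreasing in $|v|$, hence $\inf_{|v|\ge n}B_r/B_p=B_r(n)/B_p(n)\to\infty$. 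Thus $e^{-2t}\le0$, which is impossible, so no finite time exists.

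The only delicate point I expect is the legitimacy of invoking Lemma~\ref{lem:hessian}. Its hypotheses are boundedness, centering, self-adjointness and compact modular support, and all of these hold for $a_\delta$ and for the unbounded-case fields by the construction already given in the paper.
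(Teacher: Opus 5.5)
Your proposal is correct and follows essentially the same route as the paper. You test the contraction on $1+\varepsilon a_\delta$, read off the second-order coefficients from Lemma~\ref{lem:hessian}, and concentrate the spectral bands at $\pm\beta$ (or use a frequency-zero field when $\beta=0$); for unbounded $\log A$ you use bands in $\{|v|\ge n\}$ together with the evenness, monotonicity (Lemma~\ref{lem:monotone}) and divergence of $B_r/B_p$. The remark that admissible times are closed under increase is unnecessary, since every admissible $t$ already satisfies the bound and hence so does their infimum.
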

\begin{proof}
Fix a time $t$ for which the norm inequality holds and apply it to
$1+\varepsilon a_\delta$ from the preceding construction.
Since $\Phi_t(a_\delta)=e^{-t}a_\delta$, Lemma~\ref{lem:hessian} gives
\[
 e^{-2t}\langle\zeta_\delta,
           B_r(\log\Delta_\varphi)\zeta_\delta\rangle
 \le \langle\zeta_\delta,
           B_p(\log\Delta_\varphi)\zeta_\delta\rangle.
\]
For $\beta>0$, divide by the positive denominator and let
$\delta\downarrow0$. Both $B_p$ and $B_r$ are continuous and even,
so $e^{-2t}\le B_p(\beta)/B_r(\beta)$.
For $\beta=0$, any nonzero real first-chaos field gives
$e^{-2t}\le(p-1)/(r-1)$. Taking the infimum over admissible times
proves the assertion, without assuming that the infimum is attained.

For unbounded $\log A$, use the compact bands with absolute frequencies at least $n$. 
The quotient $B_r(v)/B_p(v)$ is even, nondecreasing for $v\ge0$ by Lemma~\ref{lem:monotone}, and tends to infinity. 
Thus the necessary lower bounds on $t$ diverge with $n$.

\end{proof}



\subsection{Sharp \texorpdfstring{$2$}{2}-LSI and Hypercontractivity}
\label{main:gross}
In this section, we shall deduce the $2$-logarithmic Sobolev inequality and the hypercontractivity.

\begin{lemma}\label{lem:entTaylor}
Suppose that $\cM$ is a von Neumann algebra with a faithful normal state $\varphi$ and $D_\varphi$ is its Haagerup density.
Let $a=a^*\in \cM$ with $\varphi(a)=0$. 
For real \(\varepsilon\) satisfying \(|\varepsilon|\|a\|<1\), let $\rho_\varepsilon$ be the state with density $D_\varphi^{1/2}(1+\varepsilon a)D_\varphi^{1/2}$. 
Put $\xi_a=D_\varphi^{1/4}a D_\varphi^{1/4}$ and
\[
 \vartheta(v)=\frac{v}{2\sinh(v/2)},\qquad \vartheta(0)=1.
\]
Then
\begin{equation}\label{eq:entTaylor}
 D(\rho_\varepsilon\Vert\varphi)
 =\frac{\varepsilon^2}{2}
   \ip{\xi_a}{\vartheta(\log\Delta_\varphi)\xi_a}+O(|\varepsilon|^3).
\end{equation}

\end{lemma}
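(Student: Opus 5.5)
We follow the template of Lemma~\ref{lem:hessian}: first prove a uniform third-order Taylor estimate in a finite tracial setting, then pass to general $(\cM,\varphi)$ by Haagerup reduction.

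\emph{Finite tracial case.} Let $(\mathcal A,\tau)$ be finite with $D$ bounded and invertible, $\tau(D)=1$. Put $y=D$ and $y_{\varepsilon a}=D^{1/2}(1+\varepsilon a)D^{1/2}$, so $\rho_\varepsilon$ has density $y_{\varepsilon a}$. With $\Lambda=\Lambda_D(\varepsilon a)=\log y_{\varepsilon a}-\log D$,
\[
 D(\rho_\varepsilon\Vert\varphi)=\tau\big(y_{\varepsilon a}\Lambda\big).
\]
Write $\Lambda=\varepsilon\mathbf K_D(a)+R_\varepsilon$. By \eqref{eq:logremainder0}, $\|R_\varepsilon\|\le\varepsilon^2\|a\|^2/(1-|\varepsilon|\|a\|)$, and it suffices to go one order further. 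A second Neumann step in the resolvent identity gives
\[
 R_\varepsilon=-\varepsilon^2\int_0^\infty y_t a\,\widetilde y_t\, a\, y_t\,dt+O(|\varepsilon|^3)
\]
in operator norm, uniformly in $D$. Then
\[
 \tau(y_{\varepsilon a}\Lambda)=\varepsilon\tau(D\mathbf K_D(a))+\varepsilon^2\tau\big(D^{1/2}aD^{1/2}\mathbf K_D(a)\big)+\tau(DR_\varepsilon)+O(|\varepsilon|^3).
\]
The error is controlled because $\tau(|D^{1/2}aD^{1/2}|)\le\|a\|$ and $\tau(D)=1$.

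\emph{The three terms.} We evaluate them in a finite spectral decomposition of $D$ with eigenvalues $d_j$.
\begin{itemize}
\item The linear term is $\varepsilon\tau(Da)=\varepsilon\varphi(a)=0$, since $\int_0^\infty d\,(d+t)^{-2}\,dt=1$.
\item In $\tau(DR_\varepsilon)$, the second-order part is $-\varepsilon^2\tau(D^{1/2}aD^{1/2}\mathbf K_D(a))$ up to a symmetric correction. The cleanest way to see this is the Fréchet derivative of $\log$, which gives $\tfrac{d^2}{d\varepsilon^2}\tau(y\log y)$ at $\varepsilon=0$ as $\tau(\dot y\,\mathrm{D}\!\log_D(\dot y))$, where $\dot y=D^{1/2}aD^{1/2}$.
\item Consequently the whole second-order coefficient is $\tfrac12\langle \dot y,\mathrm{D}\!\log_D(\dot y)\rangle$. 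The kernel of $\mathrm D\!\log_D$ is $(\log d_j-\log d_k)/(d_j-d_k)$.
\end{itemize}
I would organize the computation around the expression $f(\varepsilon)=\tau(y_\varepsilon\log y_\varepsilon)$ directly, using $\tau(y_\varepsilon)=1$, and take only the resulting identity from this decomposition.

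\emph{Identifying the symbol.} The Hilbert–Schmidt pairing has symbol
\[
 (d_jd_k)^{1/2}\,\frac{\log d_j-\log d_k}{d_j-d_k}.
\]
Converting from $\dot y=D^{1/2}aD^{1/2}$ to $\xi_a=D^{1/4}aD^{1/4}$ multiplies it by $(d_jd_k)^{1/2}$, giving
\[
 \frac{(d_jd_k)^{1/2}(\log d_j-\log d_k)}{d_j-d_k}=\frac{v}{2\sinh(v/2)}=\vartheta(v),\qquad v=\log(d_j/d_k).
\]
This is exactly $\vartheta(\log\Delta_\varphi)$ on $\xi_a$. The step-function approximation extends the identity to bounded invertible $D$. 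The uniform $O(|\varepsilon|^3)$ bound, with a constant depending only on $\|a\|$, can be obtained as in \eqref{eq:relativeTaylorNorm}. The function $f$ is real analytic in $\varepsilon$ for $|\varepsilon|\|a\|<1$, since $y_\varepsilon\ge(1-|\varepsilon|\|a\|)D$. I would bound the cubic term by holomorphically extending $\Lambda$ to complex $z$ and using the norm bound $\|\Lambda\|\le|z|\|a\|/(1-|z|\|a\|)$ on a circle, together with $\tau(|y_z|)\le1+|z|\|a\|$. Cauchy estimates then give a $D$-independent remainder.

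\emph{General case.} Use the Haagerup reduction $\cM\subset\mathcal R=\overline{\bigcup\mathcal R_n}$ with $a_n=\mathbb E_n(a)$, which are centered, self-adjoint and bounded by $\|a\|$. Relative entropy passes as follows.
\begin{itemize}
\item Martingale convergence for Araki relative entropy gives $D(\rho_{\varepsilon,n}\Vert\varphi_n)\uparrow D(\rho_\varepsilon\Vert\varphi)$; the limit is finite here because $\rho_\varepsilon\le(1+|\varepsilon|\|a\|)\varphi$.
\item The inclusion $\cM\subset\mathcal R$ is expected and state preserving, so the relative entropy on $\cM$ equals that on $\mathcal R$.
\item The quadratic forms converge because $\vartheta$ is bounded and continuous and $\xi_{a_n}\to\xi_a$ in $L^2$.
\item The uniform cubic remainder passes to the limit.
\end{itemize}

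\emph{Main obstacle.} The hardest step is the remainder bound that is independent of $D$ (equivalently, of $\|D^{-1}\|$), which is what lets the expansion survive the reduction. Unlike Lemma~\ref{lem:hessian}, no compact spectral support is assumed here. I expect the complex-analytic bound via $\Lambda_D(za)$ to give it, with constant of the form $C\|a\|^3$.
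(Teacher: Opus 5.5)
Your argument is correct, but it is organized differently from the paper's.

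\textbf{The paper's route.} The paper never leaves $\cM$. It applies Wirth's entropy-derivative formula \cite[Proposition 2.5]{Wir25} along the affine path and obtains $G'(\varepsilon)=\omega_a(\Lambda_{D_\varphi}(\varepsilon a))$ for $G(\varepsilon)=D(\rho_\varepsilon\Vert\varphi)$. Here $\omega_a$ is the functional with density $D_\varphi^{1/2}aD_\varphi^{1/2}$. In $G'$ the term coming from differentiating the logarithm equals $\omega_a(1)=\varphi(a)=0$. So only the first-order bound $\norm{\Lambda_{D_\varphi}(\varepsilon a)-\varepsilon\mathbf K_{D_\varphi}(a)}\le\varepsilon^2\norm a^2/(1-|\varepsilon|\norm a)$ from \eqref{eq:logremainder0} is needed, which the paper has already proved for unbounded $y$. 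Integrating from $0$ to $\varepsilon$ then gives the expansion with an explicit cubic constant. Finally, $\mathbf K_{D_\varphi}$ is identified with $\vartheta(\log\Delta_\varphi)$ through its spectral kernel.

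\textbf{Your route.} You expand $G$ itself rather than $G'$, so $\tau(DR_\varepsilon)$ enters at order $\varepsilon^2$. Its order-$\varepsilon^2$ part is exactly $-\tfrac{\varepsilon^2}{2}\tau(D^{1/2}aD^{1/2}\mathbf K_D(a))$; this is what your phrase ``symmetric correction'' should say. This forces either your second-order Neumann expansion of $\Lambda$ or the Cauchy estimate, and both give a $D$-independent remainder. In fact, your second-order operator-norm expansion together with $\norm{y_{\varepsilon a}}_{L^1(\tau)}\le 1+|\varepsilon|\norm a$ already settles what you call the main obstacle.

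In exchange, you only use Umegaki's formula and holomorphic functional calculus for bounded invertible densities. You then transfer by Haagerup reduction and martingale convergence of Araki entropy. This transfer silently uses two facts:
\begin{itemize}
\item $\hat\rho_\varepsilon|_{\mathcal R_n}$ has density $D^{1/2}(1+\varepsilon\,\mathbb E_n(a))D^{1/2}$;
\item the modular operator of $\hat\varphi|_{\mathcal R_n}$ is the restriction of $\Delta_{\hat\varphi}$.
\end{itemize}
Both hold because $\mathbb E_n$ is $\hat\varphi$-preserving and commutes with $\sigma^{\hat\varphi}$. You are also right that boundedness of $\vartheta$ is what lets you drop the compact-support hypothesis of Lemma~\ref{lem:hessian}.

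\textbf{Comparison.} The paper's proof is shorter but rests on the type~III derivative formula and the unbounded log-difference analysis. Yours is more elementary at the finite level, at the cost of the martingale theorem and the two identifications above.

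\textbf{A small slip.} Relative to $\dot y$, the symbol of $\mathrm{D}\log_D$ is $(\log d_j-\log d_k)/(d_j-d_k)$. Passing to $\xi_a$ supplies the single factor $(d_jd_k)^{1/2}$; as written, you include that factor twice.
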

\begin{proof}
Use the estimate~\eqref{eq:logremainder0} with $y=D_\varphi$.
The derivative map $\mathbf K_{D_\varphi}$ has the scalar spectral kernel
\[
 \sqrt{d_j d_k}\int_0^\infty\frac{dt}{(d_j+t)(d_k+t)}
 =\frac{\sqrt{d_j d_k}(\log d_j-\log d_k)}{d_j-d_k}
 =\vartheta(\log(d_j/d_k)).
\]
Equivalently, the Fourier transform of $(\pi/2)\sech^2(\pi t)$ gives
\[
\mathbf K_{D_\varphi}(x)
 =\frac\pi2\int_{\mathbb R}\sech^2(\pi t)
                  \sigma_t^\varphi(x)\,dt.
\]
Thus $\mathbf K_{D_\varphi}$ is a normal unital completely positive map on $\cM$, with symmetric $L^2$
implementation $\vartheta(\log\Delta_\varphi)$. 
For self-adjoint $x$ with $\norm x<1$, put $D_x=D_\varphi^{1/2}(1+x)D_\varphi^{1/2}$. 
Its bounded logarithmic difference $\Lambda_{D_\varphi}(x)=\log D_x-\log D_\varphi$ belongs to $\cM$: 
the dual action scales both densities and hence fixes their logarithmic difference.

Let $\omega_a$ be the normal functional with density $D_\varphi^{1/2}a D_\varphi^{1/2}$,
so $\norm{\omega_a}\le\norm a$, and put $G(\varepsilon)=D(\rho_\varepsilon\Vert\varphi)$.
The path is $C^1$ in predual norm and uniformly comparable to $\varphi$.
The entropy derivative formula~\cite[Proposition 2.5]{Wir25} therefore gives
\[
 G'(\varepsilon)=\omega_a(\Lambda_{D_\varphi}(\varepsilon a)).
\]
By~\eqref{eq:logremainder0}, whenever
$|\varepsilon|\norm a\le r_0<1$,
\[
 \left|G'(\varepsilon)
       -\varepsilon\omega_a(\mathbf K_{D_\varphi}(a))\right|
 \le\frac{\varepsilon^2\norm a^3}{1-r_0}.
\]
Note that $\omega_a(\mathbf K_{D_\varphi}(a))
 =\ip{\xi_a}{\vartheta(\log\Delta_\varphi)\xi_a}.$
By integrating from $0$ to $\varepsilon$, this proves the lemma.
\end{proof}

\begin{theorem}[Sharp \(2\)-LSI]\label{main:sharp-lsi2}
Suppose that $\{\Phi_t\}_{t\geq 0}$ is a $q$-Araki-Woods Ornstein-Uhlenbeck Semigroup with $\beta=\log \|A\|$ and $\cH_{\RR}\neq 0$.
We have that
\begin{equation}\label{main:sharp-alpha2}
 \alpha_2(\cL)=\left\{\begin{array}{ll}
 \frac{1}{2}, & \beta=0 \\
 \frac{2\tanh(\beta/4)}{\beta},  & 0<\beta< \infty \\
 0 , & \beta=\infty.
 \end{array}\right.
\end{equation}
\end{theorem}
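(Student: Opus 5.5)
The plan has two halves: a lower bound obtained by Gross's differentiation argument applied to Theorem~\ref{thm:cross2r}, and a matching upper bound obtained from the test elements built before Proposition~\ref{prop:necessary}.

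\emph{Lower bound.} Assume first $\beta<\infty$ and set $r(t)$ by $e^{2t}=B_{r(t)}(\beta)$. Theorem~\ref{thm:cross2r} gives $\|\Phi_t(x)\|_{r(t),\varphi}\le\|x\|_{2,\varphi}$. Take $x\ge0$ bounded, with $\bj^{1/2}_\varphi(x)\in\dom\cL^{1/2}$, and first restrict to $x$ of finite chaos with compact modular support, so that every derivative below is justified. At $t=0$ both sides are equal, since $r(0)=2$ because $B_2=1$. Hence the right derivative at $t=0^+$ of $\log\|\Phi_t(x)\|_{r(t),\varphi}$ is $\le0$.

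This derivative splits into two pieces.
\begin{itemize}
\item The $r$-derivative at $2^+$ is given by \eqref{eq:typeIIIderivative}, namely $\Ent_{2,\varphi}(x)/(4\|x\|_{2,\varphi}^2)$, multiplied by $\dot r(0)$.
\item The $t$-derivative at fixed exponent $2$ is $-\mathcal E_{2,\varphi}(x)/\|x\|_{2,\varphi}^2$, computed in $L^2$.
\end{itemize}
Differentiating $e^{2t}=B_{r(t)}(\beta)$ at $t=0$ gives $\dot r(0)=2/\partial_rB_r(\beta)|_{r=2}$. From the identity \eqref{qubit:log-B-derivative} with $2h=\beta$, we get $\partial_r\log B_r(\beta)|_{2}=\beta/(4\sinh^2(\beta/4)\cdot\coth\text{-type})$. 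Simplifying, it equals $\frac{\beta\sinh(\beta/2)}{4\sinh^2(\beta/4)}=\frac{\beta}{2}\coth(\beta/4)$. Thus $\dot r(0)=\frac{4\tanh(\beta/4)}{\beta}$, and the derivative inequality reads
\[
\tfrac{\tanh(\beta/4)}{\beta}\Ent_{2,\varphi}(x)\le \mathcal E_{2,\varphi}(x)\cdot\tfrac{1}{?}.
\]
The factor must be tracked carefully with the normalization in \eqref{eq:generalent}: compare with the qubit case, where $\alpha_2(\cL^{(2)}_h)=\tanh(h/2)/h$ with $2h=\beta$ gives $2\tanh(\beta/4)/\beta$.

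The \textbf{main obstacle} is justifying the one-sided derivative in a type III setting. The mixed limit in $(t,r)$ must be controlled, and I would handle it in two steps. First, split the difference quotient as $[\log\|\Phi_t x\|_{r(t)}-\log\|\Phi_t x\|_{2}]+[\log\|\Phi_tx\|_2-\log\|x\|_2]$. Second, use the R\'enyi identity \eqref{eq:renyiidentity} together with continuity of $D_{p/2,p}(\rho_{\Phi_tx}\Vert\varphi)$ in $t$. That continuity I would obtain from the monotone bounds by $D_{p/2,2}$ and $D_{p/2,3}$ and the uniform operator bound $\|\Phi_tx\|\le\|x\|$. General $x$ then follows by approximating in the $\|\cdot\|_\cL$ norm. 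Along the way I would use lower semicontinuity of Araki relative entropy and the convergence $\mathcal E_{2,\varphi}(x_n)\to\mathcal E_{2,\varphi}(x)$, which gives the LSI for all $x\ge0$; the value $\Ent=0$ at $x=0$ and energy $+\infty$ off the domain are trivial cases.

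The case $\beta=0$ is the limit $\beta\downarrow0$ of the same computation, with $B_r(0)=r-1$, which yields $\tfrac12$.

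\emph{Upper bound (sharpness).} Apply the LSI to $x=1+\varepsilon a_\delta$, the first-chaos field constructed before Proposition~\ref{prop:necessary}. The energy is $\mathcal E_{2,\varphi}(x)=\varepsilon^2\|\zeta_\delta\|^2$, because $\cL a_\delta=a_\delta$. For the entropy, note $\rho_x$ has density $\propto(D^{1/4}(1+\varepsilon a)D^{1/4})^2=D^{1/2}(1+2\varepsilon\,\mathbf{?})D^{1/2}+O(\varepsilon^2)$. Rather than computing this directly, I would expand $\log\|x\|_{p}$ via Lemma~\ref{lem:hessian} and use the exact identity \eqref{eq:typeIIIderivative}. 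This gives
\[
\Ent_{2,\varphi}(x)=4\|x\|_2^2\,\partial_p|_{2}\log\|x\|_p=2\varepsilon^2\langle\zeta_\delta,\partial_pB_p|_{2}(\log\Delta_\varphi)\zeta_\delta\rangle+o(\varepsilon^2).
\]
This requires interchanging the $p$-derivative with the $\varepsilon$-expansion. That interchange should follow from the uniform holomorphic remainder \eqref{eq:relativeTaylorNorm}, which is locally uniform in $s$; alternatively one can use Lemma~\ref{lem:entTaylor} after rewriting $\rho_x$ in the form $D^{1/2}(1+\varepsilon b)D^{1/2}$.

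Dividing, letting $\varepsilon\to0$ and then $\delta\to0$ gives the ratio $1/(2\,\partial_pB_p(\beta)|_{2})=\frac{2\tanh(\beta/4)}{\beta}$, using evenness and continuity of $B_p$ exactly as in Proposition~\ref{prop:necessary}. For $\beta=0$ the ratio is $\tfrac12$. For $\beta=\infty$, the bands at frequencies $\ge n$ give a ratio $\to0$, because $\partial_pB_p(v)|_2\sim v/4\to\infty$. So $\alpha_2=0$ in that case, consistent with there being no hypercontractivity.
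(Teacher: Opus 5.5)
Your strategy matches the paper's: differentiate the $2\to r$ bound of Theorem~\ref{thm:cross2r} at $r=2$, then test on $1+\varepsilon a_\delta$. However, the lower-bound half is not established as written, for two reasons.

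\textbf{The constant.} From \eqref{qubit:log-B-derivative} with $h=\beta/2$ and $p=2$,
\[
 \partial_r\log B_r(\beta)\big|_{r=2}=\frac{(\beta/2)\sinh(\beta/2)}{4\sinh^2(\beta/4)}=\frac{\beta}{4}\coth(\beta/4),
\]
not $\frac{\beta}{2}\coth(\beta/4)$; you dropped the $\frac12$ in $h$. Hence $\dot r(0)=8\tanh(\beta/4)/\beta$. Your derivative inequality $\frac{\dot r(0)}{4}\Ent_{2,\varphi}(x)\le\mathcal E_{2,\varphi}(x)$ is then exactly $\frac{2\tanh(\beta/4)}{\beta}\Ent_{2,\varphi}(x)\le\mathcal E_{2,\varphi}(x)$, with no undetermined factor. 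Appealing to the qubit constant does not replace this computation. Your sharpness half already uses the correct value, since that is what makes $1/(2\partial_pB_p(\beta)|_{p=2})=2\tanh(\beta/4)/\beta$. So the two halves of your proposal are currently inconsistent.

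\textbf{The mixed limit.} After your split you need
\[
 \liminf_{t\downarrow0}D_{r(t)/2,r(t)}(\rho_{\Phi_t x}\Vert\varphi)\ge D(\rho_x\Vert\varphi),
\]
where both the state and the exponent move. The $z$-monotonicity sandwich and the fixed-$z$ limit theorem behind \eqref{eq:typeIIIderivative} are fixed-state statements. Continuity in $t$ at a fixed exponent does not give this diagonal liminf. You would need an extra ingredient, for instance $D_{r/2,r}\ge D$ for $r>2$ combined with lower semicontinuity of Araki entropy.

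The paper avoids the issue entirely. For finite-chaos $x$, the backward map $\Phi_{-t}$ (multiply degree $k$ by $e^{kt}$) is defined. Applying Theorem~\ref{thm:cross2r} to $\Phi_{-t(r)}(x)$ with $t(r)=\frac12\log B_r(\beta)$ gives $\norm{x}_{r,\varphi}\le\norm{\Phi_{-t(r)}(x)}_{2,\varphi}$, with equality at $r=2$. The $L^r$ side now involves the fixed element $x$, so its right derivative is exactly \eqref{eq:typeIIIderivative}. The other side is a pure $L^2$ quantity with derivative $t'(2)\mathcal E_{2,\varphi}(x)/\norm{x}_{2,\varphi}^2$, where $4t'(2)=\beta/(2\tanh(\beta/4))$.

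\textbf{Two smaller points.}

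First, approximation in $\norm{\cdot}_{\cL}$ must preserve positivity, and chaos truncation destroys it. The paper takes uniformly bounded $p_j\to x^{1/2}$ strongly$^*$, uses $p_j^*p_j$, then smooths by $\Phi_s$ and lets $s\downarrow0$ diagonally.

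Second, in the sharpness half, a uniform bound on the remainder in \eqref{eq:relativeTaylorNorm} does not control its $s$-derivative, so the interchange you propose is unjustified as stated. Your fallback is the paper's route and works:
\begin{itemize}
\item the density of $\rho_{x_\varepsilon}$ is $D_\varphi^{1/2}(1+a_\varepsilon)D_\varphi^{1/2}$, with $a_\varepsilon=\varepsilon\widetilde a+O(\varepsilon^2)$ and $\widetilde a=\sigma^\varphi_{i/4}(a)+\sigma^\varphi_{-i/4}(a)$;
\item Lemma~\ref{lem:entTaylor} with $\xi_{\widetilde a}=2\cosh(\frac14\log\Delta_\varphi)\zeta$ gives $\Ent_{2,\varphi}(x_\varepsilon)=\varepsilon^2\langle\zeta,F(\log\Delta_\varphi)\zeta\rangle+O(|\varepsilon|^3)$;
\item here $F(v)=\frac v2\coth(v/4)=2\,\partial_pB_p(v)|_{p=2}$, which agrees with your formula.
\end{itemize}
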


\begin{proof}
First suppose that $\beta<\infty$.
Let $\displaystyle a_\beta=\frac{2\tanh(\beta/4)}{\beta}$.
We first establish the lower bound. 
Let $x$ be a strictly positive finite Wick polynomial with modular-analytic letters, and let
\[
 t(r)=\frac12\log B_r(\beta),\qquad t(2)=0.
\]
On the finite-chaos decomposition of $x$, define $x_r=\Phi_{-t(r)}(x)$ by multiplying its degree-$k$ component by $e^{kt(r)}$. 
This finite sum converges to $x$ in operator norm as $r\downarrow 2$. 
Applying Theorem~\ref{thm:cross2r} to $x_r$ gives $\norm x_{r,\varphi}\le\norm{\Phi_{-t(r)}(x)}_{2,\varphi}.$
The two sides agree at $r=2$. 
Differentiating at $r=2^+$, we see that
\[
 \frac{\Ent_{2,\varphi}(x)}{4\norm x_{2,\varphi}^2}
 \le t'(2)\frac{\mathcal E_{2,\varphi}(x)}{\norm x_{2,\varphi}^2}.
\]
The derivative formula for $B_r$ gives
\[
 4t'(2)=\frac{\beta}{2\tanh(\beta/4)}=a_\beta^{-1},
\]
including its limit at $\beta=0$. 
Adding $\epsilon 1$ with $\epsilon >0$ and letting $\epsilon\downarrow0$ handles positive semidefinite Wick polynomials by the entropy lower semicontinuity.

Let $\mathscr P$ be the $*$-algebra of Wick polynomials whose letters have compact $\log A$ spectral support. 
It is modular-analytic and strongly dense. 
For bounded $x\ge0$, choose uniformly bounded $p_j\in\mathscr P$ with $p_j\to x^{1/2}$ strongly*. 
Then $p_j^*p_j\to x$ strongly*, hence in the symmetric $L^2$ embedding. For each fixed $s>0$,
\[
 \bj_{\varphi}^{1/2}(\Phi_s(p_j^*p_j))\longrightarrow e^{-s\cL}\bj_{\varphi}^{1/2}(x)
\]
in the norm $\norm{\cdot}_{\cL}$, because $\cL^{1/2}e^{-s\cL}$ is bounded. 
If $\bj_{\varphi}^{1/2}(x)\in \dom \cL^{1/2}$, the limit $s\downarrow 0$ also holds in the norm $\norm{\cdot}_{\cL}$. 
For the entropy limit, we use
$\norm{y_j^2-y^2}_1\le(\norm{y_j}_2+\norm y_2)\norm{y_j-y}_2$
and lower semicontinuity of the Araki entropy. 
For completeness, let $y\ge0$ be any vector in $\dom \cL^{1/2}$. 
Positive polynomial vectors $\bj_{\varphi}^{1/2}(p^*p)$ are dense in the positive cone: first approximate by $\bj_{\varphi}^{1/2}(\cM_+)$, then use the preceding square-root approximation. 
Choose $s_n\downarrow0$, $\eta_n\downarrow0$, and such vectors $z_n$ satisfying
\[
 \norm{z_n-y}_2\le
 \frac{\eta_n}{1+\norm{\cL^{1/2}e^{-s_n\cL}}}.
\]
Then $y_n=e^{-s_n\cL}z_n$ is again a positive modular-analytic polynomial vector and, using the equivalent norm $\norm v_2+\norm{\cL^{1/2}(v)}_2$,
\[
 \norm{y_n-y}_{\cL}
 \le\eta_n+\norm{e^{-s_n\cL}y-y}_{\cL}\longrightarrow0.
\]
Entropy lower semicontinuity now proves the lower bound on the entire positive closed form domain.

For optimality, put $x_\varepsilon=1+\varepsilon a$, where $a$ is a nonzero centered self-adjoint first-chaos element with compact modular spectral support, and set $\xi=\Delta_\varphi^{1/4}a\Omega$.
We see that $\mathcal E_{2,\varphi}(x_\varepsilon)=\varepsilon^2\norm\xi^2.$
Applying Lemma~\ref{lem:entTaylor} to 
the following perturbation
\begin{align*}
  \frac{1}{\left\|\bj^{1/2}_\varphi(x_{\varepsilon})\right\|_{2}^2} (\bj_\varphi^{1/2}(x_\varepsilon))^2 =D_\varphi^{1/2}
   \frac{1+\varepsilon \widetilde{a}+\varepsilon^2 \sigma_{i/4}^\varphi(a)\sigma_{-i/4}^\varphi(a)}{1+\varepsilon^2 \|\xi\|_2^2}
  D_\varphi^{1/2}
\end{align*}
where
$\widetilde{a}=\sigma_{i/4}^\varphi(a)+\sigma_{-i/4}^\varphi(a)$, we obtain that
\begin{equation}\label{eq:entHessian}
\begin{aligned}
 \Ent_{2,\varphi}(x_\varepsilon)
 = \|x_{\varepsilon}\|_{2, \varphi}^2 D(\rho_{x_\varepsilon}\|\varphi)
 =\frac{\varepsilon^2}{2}
   \ip{\xi_{\widetilde{a}}}{\vartheta(\log\Delta_\varphi)\xi_{\widetilde{a}}}+O(|\varepsilon|^3)\\
 =\varepsilon^2\ip\xi{\left(\frac{1}{2}\log \Delta_\varphi \right)\coth\left(\frac{1}{4}\log \Delta_\varphi\right)\xi}+O(|\varepsilon|^3),
 \end{aligned}
\end{equation}
where
\[
 \xi_{\widetilde{a}}=D_\varphi^{1/4}\widetilde{a} D_\varphi^{1/4}
 =\bigl(\Delta_\varphi^{1/4}+\Delta_\varphi^{-1/4}\bigr)\xi
 =2\cosh\!\left(\frac14\log\Delta_\varphi\right)\xi.
\]
Note that the normalized perturbation is $\displaystyle a_{\varepsilon}=\frac{\varepsilon \widetilde{a}+\varepsilon^2 (\sigma_{i/4}^{\varphi}(a)\sigma_{-i/4}^{\varphi}(a)-\|\xi\|_2^2)}{1+\|\xi\|_2^2\varepsilon^2}$ satisfying $a_\varepsilon^*=a_{\varepsilon}$, $\varphi(a_\varepsilon)=0$ and $a_\varepsilon= \varepsilon \widetilde{a}+O(\varepsilon^2)$.

Define the even continuous function
\[
 F(v)=\frac v2\coth(v/4),\qquad F(0)=2.
\]
The logarithmic Sobolev constant $\alpha_2(\cL)$ therefore obeys
\[
 \alpha_2(\cL)\le
 \frac{\|\xi\|_2^2}
      {\langle\xi,F(\log\Delta_\varphi)\xi\rangle}.
\]
Use the paired spectral bands constructed before
Proposition~\ref{prop:necessary}. For $\beta>0$ the quotient tends
to $1/F(\beta)=2\tanh(\beta/4)/\beta$; for $\beta=0$ it is $1/2$.
No endpoint eigenvector is assumed. If the modular spectrum is
unbounded, the same quotient is at most
$\displaystyle \frac{1}{\inf_{|v|\ge n}F(v)}$ on the bands at frequencies at least $n$,
and hence tends to zero. Thus no positive $2$-LSI constant exists
in that case.
\end{proof}

\begin{corollary}[Hypercontractivity along Gross Curve]\label{cor:HC}
Suppose that $\{\Phi_t\}_{t\geq 0}$ is a $q$-Araki-Woods Ornstein-Uhlenbeck semigroup and $\beta=\log \|A\|<\infty$.
Let $\displaystyle \left.\frac{\beta}{8\tanh(\beta/4)}\right|_{\beta=0}:=\frac{1}{2}$.
For every \(1<p<r<\infty\),
\begin{equation}\label{main:gross-curve}
 \|\Phi_t(x)\|_{r,\ph}\le\|x\|_{p,\ph}
 \quad\text{whenever}\quad
 t\ge \frac{\beta}{8\tanh(\beta/4)}\log\frac{r-1}{p-1}.
\end{equation}
The coefficient is optimal among the Gross curves.
\end{corollary}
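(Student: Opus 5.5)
The plan is to run Gross's argument, with variable exponent, on the $2$-LSI of Theorem~\ref{main:sharp-lsi2}. The one genuinely new ingredient needed is a $p$-LSI with constant $\alpha_2(\cL)$ for every $p>1$. Set $c=\beta/(8\tanh(\beta/4))=1/(4\alpha_2(\cL))$. The target is then
\[
 \|\Phi_t(x)\|_{s(t),\varphi}\le\|x\|_{p,\varphi},\qquad s(t)=1+(p-1)e^{t/c},
\]
so the Gross curve is $\dot s=4\alpha_2(\cL)(s-1)$. Solving $s(t)=r$ gives exactly the time $t=c\log\frac{r-1}{p-1}$. For larger times, the extra evolution is absorbed by $L^r$-contractivity of $\Phi_t$, which holds because $\Phi_t$ is unital, completely positive and $\varphi$-preserving.

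\textbf{Transfer from $2$ to $p$.} For the $p$-LSI I would use the power map $I_{p,2}$ from Section~2, which preserves entropy: $\Ent_{p,\varphi}(I_{p,2}(x))=\Ent_{2,\varphi}(x)$. What remains is a Stroock--Varopoulos type energy comparison,
\[
 \mathcal E_{p,\varphi}(I_{p,2}(x))\ge\mathcal E_{2,\varphi}(x).
\]
The factor $pp'/4$ in \eqref{qubit:energy} is normalized precisely for this. I would prove it first in the finite baby-Fock models $\Gamma_{\langle1,\dots,n\rangle}$. There the generator is a sum of KMS-symmetric terms of the form $\sum_j(\text{derivation})^*(\text{derivation})$ with modular eigenvalue weights, coming from the qubit factorization \eqref{spin:factor}. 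For such Lindblad-type forms, the noncommutative Stroock--Varopoulos inequality is standard: each term reduces to a two-variable scalar inequality, checked on the spectral decomposition of $D^{1/4}XD^{1/4}$ by the divided-difference kernels already seen in Lemma~\ref{lem:hessian}. I would then pass to $\Gamma_q$ through the Speicher central limit of Lemma~\ref{lem:approximation} and Raynaud's embedding \eqref{eq:raynaudnorm}.

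\textbf{Alternative route.} A cleaner option, if it works, avoids the $p$-LSI altogether and instead composes hypercontractivity estimates. The first ingredient is the $2\to r$ estimate of Theorem~\ref{thm:cross2r}, valid at time $\tfrac12\log B_r(\beta)$. Lemma~\ref{lem:monotone} and the bound $\tfrac12\log B_r(\beta)\le c\log(r-1)$ should follow from concavity. The latter amounts to comparing $\frac{d}{dr}\tfrac12\log B_r(\beta)$ with $c/(r-1)$. Using \eqref{qubit:log-B-derivative}, the two agree at $r=2$, and the difference has a definite sign on $r\ge2$. For $p<2$ the dual $p\to2$ bound follows from KMS duality, since $\Phi_t$ is KMS-symmetric and $B_{p'}=B_p^{-1}$. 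For general $p<r$ on the same side of $2$, one cannot simply compose; this is where the differential (LSI) argument is genuinely needed. I therefore regard the $p$-LSI transfer as the main step.

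\textbf{Running the curve, and optimality.} With the $p$-LSI in hand, the computation proceeds as in Corollary~\ref{qubit:HC}. Using \eqref{qubit:p-norm-derivative} and \eqref{qubit:time-norm-derivative}, I would show that $\frac{d}{dt}\log\|\Phi_t(x)\|_{s(t),\varphi}\le0$. I would do this first in the finite models and then pass to the limit via Lemma~\ref{lem:approximation}. The inequality is first obtained for positive polynomial $x$, and Lemma~\ref{hc:positive-extension} (finite level) extends it to all $x$.

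Optimality among Gross curves comes from Proposition~\ref{prop:necessary} with $p=2$ and $r\downarrow2$. Suppose $\tilde c\log\frac{r-1}{p-1}$ works for all pairs. Then $\tilde c\log(r-1)\ge\tfrac12\log B_r(\beta)$, and dividing by $r-2$ and letting $r\to2$ gives
\[
 \tilde c\ge\tfrac12\frac{d}{dr}\log B_r(\beta)\Big|_{r=2}=c.
\]
The last equality is the computation $4t'(2)=a_\beta^{-1}$ from the proof of Theorem~\ref{main:sharp-lsi2}.

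The main obstacle is the energy comparison $\mathcal E_{p,\varphi}(I_{p,2}(x))\ge\mathcal E_{2,\varphi}(x)$ in the non-tracial setting. If it fails in general, my fallback is to establish it only on the qubit building blocks, which is a tractable two-dimensional computation. I would then propagate it through the induction of Proposition~\ref{spin:finiteHC}, using King's inequality \eqref{eq:king} for exponents in both ranges $s\ge2$ and $s\le2$.
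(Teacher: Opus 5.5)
Your main route is the paper's route. You take a $2$-LSI and upgrade it to every $s>1$ by the Stroock--Varopoulos comparison $\mathcal E_{s}(I_{s,2}(X))\ge\mathcal E_2(X)$; this is exactly \cite[Theorem 14]{BDR} for finite-dimensional GNS-symmetric semigroups, so you need not reprove it. You then run Gross's variable-exponent argument in the finite baby-Fock models, apply Lemma~\ref{hc:positive-extension}, and transfer the norm inequality to $\Gamma_q$. Your optimality argument via Proposition~\ref{prop:necessary} at $p=2$, $r\downarrow2$ is equivalent to the paper's use of the sharp $\alpha_2$.

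\textbf{The gap.} One link is mis-sourced, and as written the chain does not close. You take the $2$-LSI from Theorem~\ref{main:sharp-lsi2}, which is a statement on $\Gamma_q$. But the Stroock--Varopoulos step and the Gross curve are run on $\Gamma_{\langle1,\ldots,n\rangle}$.
\begin{itemize}
\item An LSI on $\Gamma_q$ does not descend to the approximants, because $\Gamma_q$ sits inside an ultraproduct of the finite models, not the other way round.
\item The reverse move you suggest, pushing the energy comparison up to $\Gamma_q$ through Lemma~\ref{lem:approximation}, is not available. That lemma transfers only norms of $\Phi_t^{(m)}$ applied to fixed polynomials, not Dirichlet forms or entropies.
\item Running Gross's argument directly on $\Gamma_q$ would need variable-exponent differentiation of Haagerup $L^p$ norms. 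The paper establishes this only at $p=2^+$.
\end{itemize}

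\textbf{The fix.} What you actually need is the bound $\alpha_2(\cL_n^{\varepsilon})\ge 2\tanh(\beta/4)/\beta$, uniformly in $n$, in the signs, and in the thermal weights $\mu_j^4\le e^\beta$. The paper obtains it by differentiating the uniform estimate of Proposition~\ref{spin:finiteHC} at $r=2$, which is the finite-dimensional analogue of the lower-bound argument in Theorem~\ref{main:sharp-lsi2}. With this input, everything stays finite-dimensional, and only the final hypercontractive norm inequality is transferred to $\Gamma_q$.

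\textbf{The fallback.} Your fallback would not work, and you should drop it. The block-compression induction of Proposition~\ref{spin:finiteHC} propagates norm bounds, not energy inequalities. King's inequality \eqref{eq:king} points the wrong way at the input exponent once $p>2$, and at the output exponent once $r<2$. So the induction can only reproduce the case $p\le2\le r$, which is precisely the obstruction the paper records after Conjecture~\ref{con:hyper}. The fallback is also unnecessary, since \cite[Theorem 14]{BDR} applies to $\cL_n^{\varepsilon}$ directly.
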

\begin{proof}
Differentiate the uniform spin estimate of Proposition ~\ref{spin:finiteHC}, namely \eqref{spin:finitecontraction} under \eqref{spin:time} at $r=2$. 
This gives $\alpha_2(\cL_n^{\varepsilon})\ge 2\tanh(\beta/4)/\beta$. 
The semigroup is GNS-symmetric. 
By the finite-dimensional Stroock--Varopoulos inequality~\cite[Theorem 14]{BDR}, we have that $\alpha_s(\cL_n^{\varepsilon})\ge 2\tanh(\beta/4)/\beta$ for $s>1$. 

Hence Equation \eqref{main:gross-curve} holds for $x\geq 0$. 
By Lemma~\ref{hc:positive-extension}, the estimate extends to arbitrary inputs. 
The matricial approximation of Section~\ref{spin:CLTsection} transfers the resulting uniform norm inequality to $\cM$.
The sharp value of $\alpha_2$ implies that a smaller coefficient cannot work for all pairs, by differentiating a purported improved $2\to r$ Gross curve at $r=2$.

\end{proof}

\begin{theorem}
Suppose that $\{\Phi_t\}_{t\geq 0}$ is a $q$-Araki-Woods Ornstein-Uhlenbeck semigroup with $\beta=\log \|A\|<\infty$.
For $ 1<p\leq2\leq r<\infty,$
\begin{equation}
 \|\Phi_t\|_{p\to r}\leq1
 \quad\text{ if and only if }\quad
 e^{-2t}\leq\frac{B_p(\beta)}{B_r(\beta)}.
 \qquad
 \label{spin:crossing}
\end{equation}
\end{theorem}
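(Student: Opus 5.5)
Necessity is already contained in Proposition~\ref{prop:necessary}: if $\|\Phi_t\|_{p\to r}\le1$, then testing on $1+\varepsilon a_\delta$ with Lemma~\ref{lem:hessian} gives $e^{-2t}\le B_p(\beta)/B_r(\beta)$. This covers $p=2$ or $r=2$ as well, since $B_2=1$. The endpoint $p=r=2$ is trivial, because the condition reads $t\ge0$ and $\Phi_t$ is an $L^2$ contraction. So the real content is sufficiency, and the plan is to factor through $L^2$.

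Given $t$ with $e^{-2t}\le B_p(\beta)/B_r(\beta)$, I would write $t=t_1+t_2$ with
\[
e^{-2t_2}=B_r(\beta)^{-1},\qquad e^{-2t_1}\le B_p(\beta)=B_{p'}(\beta)^{-1}.
\]
Lemma~\ref{lem:monotone} says $B_r\ge1$ for $r\ge2$ and $B_{p'}\ge1$, so both $t_1,t_2\ge0$.

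Theorem~\ref{thm:cross2r} gives $\|\Phi_{t_2}(y)\|_{r,\varphi}\le\|y\|_{2,\varphi}$. For the first factor I want $\|\Phi_{t_1}(x)\|_{2,\varphi}\le\|x\|_{p,\varphi}$. I would obtain it by duality from the $2\to p'$ estimate of Theorem~\ref{thm:cross2r}, which holds for time $t_1$ because $e^{-2t_1}\le B_{p'}(\beta)^{-1}$.

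The duality step is KMS symmetry of $\Phi_t$. Under the symmetric embedding $x\mapsto\bj_\varphi^{1/s}(x)$, the pairing $tr(\bj_\varphi^{1/p}(x)\bj_\varphi^{1/p'}(y))$ realizes $L^{p'}$ as the dual of $L^p$, and the symmetric $L^2$ implementation of $\Phi_t$ is self-adjoint. The $2\to p'$ bound on the dense image of $\cM$ then dualizes to a $p\to 2$ bound on the symmetric images. One checks that the adjoint of the $L^2\to L^{p'}$ extension of $\Phi_{t_1}$ coincides on $\bj_\varphi^{1/p}(\cM)$ with $\bj_\varphi^{1/p}(x)\mapsto\bj_\varphi^{1/2}(\Phi_{t_1}x)$. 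This follows from KMS symmetry, $\langle \bj^{1/2}\Phi_t x,\bj^{1/2}y\rangle=\langle\bj^{1/2}x,\bj^{1/2}\Phi_t y\rangle$, together with the identity $tr(\bj^{1/p}(x)\bj^{1/p'}(y))=tr(\bj^{1/2}(x)\bj^{1/2}(y))$. The latter comes from cyclicity, moving powers of $D_\varphi$ using the symmetric normalization.

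Composing the two pieces gives
\[
\|\Phi_t x\|_{r,\varphi}\le\|\Phi_{t_1}x\|_{2,\varphi}\le\|x\|_{p,\varphi}.
\]

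The main obstacle is making the duality rigorous in the type III Haagerup setting. The issue is not the formula but the passage through the dual pairing: one must verify density of $\bj^{1/p'}_\varphi(\cM)$ in $L^{p'}$ and that the symmetric pairing is the correct duality. An alternative is to dualize at the finite baby-Fock level, where everything is matricial and the semigroup is GNS-symmetric, so adjoints are explicit. There one gets the $p\to r$ bound from Proposition~\ref{spin:finiteHC} and its dual, applies Lemma~\ref{hc:positive-extension}, and transfers to $\Gamma_q$ through the ultraproduct norm convergence \eqref{eq:normCLT} exactly as in Theorem~\ref{thm:cross2r}. I would take this finite-dimensional route to avoid the type III duality subtleties.
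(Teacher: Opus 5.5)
Your proposal is correct and matches the paper's proof. Necessity comes from Proposition~\ref{prop:necessary}. For sufficiency you split $t=t_1+t_2$: Theorem~\ref{thm:cross2r} handles the $2\to r$ leg, and KMS duality $\|\Phi_{t_1}\|_{p\to2}=\|\Phi_{t_1}\|_{2\to p'}$ with $B_{p'}=B_p^{-1}$ handles the $p\to2$ leg, after which you compose by the semigroup property.

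The type III duality you flag as the main obstacle is not hard. It needs only density of $\bj_\varphi^{1/2}(\cM)$ in $L^2(\cM)$, H\"older, and the cyclicity identity you already wrote; density in $L^{p'}$ is not needed. So the finite-dimensional detour is unnecessary, though it would also work, as in the proof of Corollary~\ref{cor:HC}.
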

\begin{proof}
The case $p=r=2$ is ordinary $L^2$ contractivity. 
For every remaining pair, KMS symmetry and the weighted dual pairing give
$\norm{\Phi_t}_{p\to2}=\norm{\Phi_t}_{2\to p'}$. 
By Theorem~\ref{thm:cross2r}, the first map is contractive at time $-\frac12\log B_p(\beta)$, since $B_{p'}=B_p^{-1}$. 
The map $2\to r$ is contractive at time $\frac12\log B_r(\beta)$. 
Compose these maps and use the semigroup property. 
The necessity follows from Proposition~\ref{prop:necessary}.

\end{proof}

\begin{corollary}\label{same:below-two}
Suppose that $\{\Phi_t\}_{t\geq 0}$ is a $q$-Araki-Woods Ornstein-Uhlenbeck semigroup with $\beta=\log \|A\|<\infty$.
For \(1<p<r<2\),
\begin{equation}\label{same:below-bracket}
 \frac12\log\frac{B_r(\beta)}{B_p(\beta)}
 \leq\tau(p,r)\leq
 \min\left\{
 \frac{\beta}{8\tanh(\beta/4)}\log\frac{r-1}{p-1},\,
 \frac12\log B_{\,2p'/r'}(\beta)
 \right\}.
\end{equation}
For \(2<p<r<\infty\),
\begin{equation}\label{same:below-bracket2}
 \frac12\log\frac{B_r(\beta)}{B_p(\beta)}
 \leq\tau(p,r)\leq
 \min\left\{
 \frac{\beta}{8\tanh(\beta/4)}\log\frac{r-1}{p-1},\,
 \frac12\log B_{\,2r/p}(\beta)
 \right\}.
\end{equation}
\end{corollary}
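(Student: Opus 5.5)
The lower bounds in both \eqref{same:below-bracket} and \eqref{same:below-bracket2} come directly from Proposition~\ref{prop:necessary}, which holds for every pair $1<p<r<\infty$. The work is in the upper bounds. Each upper bound is a minimum of two terms, so it suffices to show that $\norm{\Phi_t}_{p\to r}\le1$ at each of the two times separately.

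The first term is supplied by Corollary~\ref{cor:HC}. The Gross curve gives contractivity $p\to r$ whenever $t\ge \frac{\beta}{8\tanh(\beta/4)}\log\frac{r-1}{p-1}$, for all $1<p<r<\infty$. Hence $\tau(p,r)$ is bounded by this quantity in both ranges.

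For the second term in the range $2<p<r<\infty$, I would interpolate between the trivial estimate $\norm{\Phi_t}_{p\to p}\le1$ and a cross-$2$ estimate. Set $s=2r/p>2$ and $t_0=\frac12\log B_{s}(\beta)$. By Theorem~\ref{thm:cross2r}, $\norm{\Phi_{t_0}}_{2\to s}\le1$, and $\Phi_{t_0}$ is contractive $\infty\to\infty$ as a unital completely positive map. Take $\theta$ with $1/p=\theta/2$, so $\theta=2/p$. The target exponent is then $1/r'' = \theta/s=(2/p)(p/(2r))=1/r$, so $2\to s$ and $\infty\to\infty$ interpolate exactly to $p\to r$. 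To justify this I would use complex interpolation of the symmetric (Kosaki) Haagerup $L^p$ spaces, $[L^\infty,L^2]_\theta = L^{p}$ with the symmetric embedding $x\mapsto D_\varphi^{1/(2p)}xD_\varphi^{1/(2p)}$. The family $D^{z/4}\cdot D^{z/4}$ is compatible, and the map $\Phi_{t_0}$ acts the same at both endpoints, so Riesz--Thorin/Stein interpolation for this compatible couple (Kosaki \cite{Kos84}) gives $\norm{\Phi_{t_0}}_{p\to r}\le1$. Since $t\ge t_0$ and $\Phi_{t-t_0}$ is contractive on $L^r$, we get $\tau(p,r)\le t_0$. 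If the positive-input version is more convenient, Lemma~\ref{hc:positive-extension} can be used at the finite-dimensional level before passing to the limit, as in Section~\ref{spin:CLTsection}.

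For $1<p<r<2$ I would pass to the dual range by KMS duality, as in the proof of the crossing theorem: $\norm{\Phi_t}_{p\to r}=\norm{\Phi_t}_{r'\to p'}$ with $2<r'<p'$. Applying the previous paragraph to the pair $(r',p')$ gives the time $\frac12\log B_{2p'/r'}(\beta)$, which matches \eqref{same:below-bracket}.

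I expect the main obstacle to be the rigorous interpolation step in the type III setting. Kosaki's theorem is stated for the symmetric embeddings between $L^\infty$ and $L^1$, so I need to check that the $L^2$ endpoint with exponent $s$ lies on the same scale. My plan is to reinterpolate: first realize $L^p$ and $L^r$ as interpolation spaces of the couples $(L^\infty,L^2)$ and $(L^\infty,L^s)$ using the reiteration theorem. If that is delicate, I would instead prove the estimate in the finite baby Fock models, where it is ordinary Schatten-class Stein interpolation with analytic weights $D^{z}$, and then transfer it via \eqref{eq:normCLT} and Nou's discretization as in Theorem~\ref{thm:cross2r}.
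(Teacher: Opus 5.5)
Your proposal is correct and follows essentially the same route as the paper. The lower bounds come from Proposition~\ref{prop:necessary}, the Gross-curve term from Corollary~\ref{cor:HC}, and the second term from interpolating the $L^2\to L^{2r/p}$ contraction of Theorem~\ref{thm:cross2r} with $L^\infty$-contractivity at weight $\theta=2/p$; for $1<p<r<2$ the paper, like you, first applies the KMS duality $\|\Phi_t\|_{p\to r}=\|\Phi_t\|_{r'\to p'}$. The paper only asserts the interpolation step, so your reiteration argument on Kosaki's symmetric scale, with both couples $(L^\infty,L^2)$ and $(L^\infty,L^{s})$ embedded in $L^1$ the same way, is a valid justification of it and not a gap.
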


\begin{proof}
For $2<p<r$, take $s=2r/p>2$ and a time at which $\Phi_t: L^2(\cM)\to L^s(\cM)$ is contractive. 
Interpolate with the contraction $L^\infty(\cM)\to L^\infty(\cM)$, using weight $\theta=2/p$. 
The domain reciprocal exponent is $\theta/2=1/p$ and the range reciprocal exponent is $\theta/s=1/r$. 
For $p<r<2$, first dualize to $r'\to p'$, where $2<r'<p'$, and use the preceding calculation. 
\end{proof}

These estimates suggest the following sharp hypercontractivity conjecture.
\begin{conjecture}\label{con:hyper}
Suppose that $\{\Phi_t\}_{t\geq 0}$ is a $q$-Araki-Woods Ornstein-Uhlenbeck semigroup with $\|A\|<\infty$.
For $ 1<p\leq r<\infty,$
    \begin{equation}
 \|\Phi_t\|_{p\to r}\leq1
 \quad\text{ if and only if }\quad
 e^{-2t}\leq \frac{B_p(\beta)}{B_r(\beta)}.
 \qquad
 \label{spin:crossing0}
\end{equation}
\end{conjecture}

\begin{remark}
The conjecture is proved here when $p\le2\le r$, and in the tracial case by the sharp Gross curve.
Suppose that for every finite signed-spin system with $1\leq \mu_j^4 \leq e^\beta$ as above, every choice of its commutation signs, every \(1<s<\infty\), and every positive invertible \(x\in \Gamma_{\langle 1, \ldots, n\rangle}\),
\begin{equation}\label{same:spin-lsi}
 \Energ_{s,\tau_n^{\varepsilon}}(x)
 \geq \alpha_s^*(\beta)\Ent_{s,\tau_n^{\varepsilon}}(x),
\end{equation}
where
\begin{equation}\label{same:alpha}
 \alpha_s^*(\beta)=
 \frac{s s'\sinh(\beta/(2s))\sinh(\beta/(2s'))}
      {\beta\sinh(\beta/2)},
 \qquad \alpha_s^*(0)=\frac12.
\end{equation} 
Then Conjecture \ref{con:hyper} holds.
However, a proof for one qubit is insufficient to establish~\eqref{same:spin-lsi} for arbitrary signs.
\end{remark}

\section{Complete Modified Logarithmic Sobolev Inequalities}\label{sec:cmlsi}

In this section, we shall obtain the complete modified logarithmic Sobolev inequalities for the $q$-Araki-Woods Ornstein-Uhlenbeck semigroups via the intertwining property.
It is worth to point out that Lee, Xu, Youn, and Zhang \cite{LXYZ} already construct a gradient through rotations on a doubled \(q\)-Araki–Woods space.

Firstly, let us recall the complete modified logarithmic Sobolev inequalities (CMLSI) and complete Fisher monotonicity (CFM).
Suppose that $\{\Phi_t\}_{t\geq 0}$ is a GNS-symmetric quantum Markov semigroup with generator $\cL$ and $\kappa\in \RR$.
Suppose that $\displaystyle \mathbb E_{\Phi}=\lim_{t\to\infty} \Phi_t$.
For a regular normal positive functional $\rho$, the Fisher information $\cI_{\cL}$ is defined to be 
\begin{align*}
\cI_{\cL}(\rho\|  \mathbb E_{\Phi}^*(\rho))=tr(\cL^*(D_\rho)(\log D_\rho-\log D_{ \mathbb E_{\Phi}^*(\rho)})),
\end{align*}
where $D_\rho, D_{ \mathbb E_{\Phi}^*(\rho)}$ are the Haagerup densities.
Here regular means that $\rho\in\dom \cL_*$ and $\rho$ is bounded above and below by positive multiples of the faithful functional
$\mathbb E_{\Phi}^*\rho$. 
These conditions make the logarithmic difference a
bounded element of the algebra.
We refer to \cite{Wir25} for the details on the Fisher information.

The semigroup $\{\Phi_t\}_{t\geq 0}$ satisfies $MLSI(\kappa)$ if 
\begin{align*}
\cI_{\cL}(\rho\| \mathbb E_{\Phi}^*(\rho))\geq \kappa D(\rho\| \mathbb E_{\Phi}^*(\rho));
\end{align*}
satisfies $\cmlsi(\kappa)$ if $\Phi_t\otimes \Id_{\cN}$ satisfies $MLSI(\kappa)$ for every $\sigma$-finite von Neumann algebra $\cN$.
The semigroup $\{\Phi_t\}_{t\geq 0}$ satisfies $FM(\kappa)$ if 
\begin{align*}
\cI_{\cL}(\Phi_t^*(\rho)\| \mathbb E_{\Phi}^*(\rho))\leq e^{-t\kappa}\cI_{\cL}(\rho\| \mathbb E_{\Phi}^*(\rho)).
\end{align*}
It satisfies $\cfm(\kappa)$ if $\Phi_t\otimes \Id_{\cN}$ satisfies $FM(\kappa)$ for every $\sigma$-finite von Neumann algebra $\cN$.

Suppose that 
\[
 \widehat \Gamma_q=\Gamma_q(\cH_{\RR}\oplus \cH_{\RR},U\oplus U),
\]
with vacuum state \(\widehat\ph\), and embed \(\Gamma_q\) into its first copy by the embedding map \(j\). 
The projection onto the first copy gives a normal vacuum-preserving conditional expectation \(\mathbb E: \widehat \Gamma_q\to j(\Gamma_q)\).
Let
\[
 R_\theta=
 \begin{pmatrix}
 \cos\theta I&-\sin\theta I\\
 \sin\theta I&\cos\theta I
 \end{pmatrix}.
\]
Since \(R_\theta\) commutes with \(U_s\oplus U_s\) for all $s\in \RR$, its second quantization defines vacuum-preserving automorphisms \(\alpha_\theta\) of \(\widehat \Gamma_q\).
On Wick polynomials, we define a linear map $\partial $ into $\widehat \Gamma_q$:
\begin{equation}\label{eq:rotation-derivation}
 \partial x=\left.\frac{d}{d\theta}\alpha_\theta(j(x))\right|_{\theta=0}.
\end{equation}
Differentiating multiplicativity and $*$-preservation gives
\[
 \partial(xy)=\partial x\,j(y)+j(x)\partial y,\qquad
 \partial(x^*)=(\partial x)^*.
\]
It also commutes with the modular automorphism groups.

For a degree-$k$ Wick operator $x_k$, we have that
\begin{equation}\label{eq:compressionrotation}
\mathbb E(\alpha_\theta j(x_k))=(\cos\theta)^k j(x_k).
\end{equation}
Let $\displaystyle x=\sum_kx_k$ be its finite chaos decomposition. On GNS vectors the automorphism is implemented by the Fock unitary $\mathcal F_q(R_\theta)$, and
\[
 \ip{j(x)\widehat\Omega}{\mathcal F_q(R_\theta)jx\widehat\Omega}
 =\sum_k(\cos\theta)^k\norm{x_k\Omega}^2.
\]
Twice differentiating at zero, and using that the infinitesimal generator of a unitary group is skew-adjoint, we obtain that 
\begin{equation}\label{eq:GNSderivation}
 \norm{\partial x\,\widehat\Omega}^2
 =\widehat\varphi((\partial x)^*\partial x)
 =\sum_k k\norm{x_k\Omega}^2=\varphi(x^*\cL(x)).
\end{equation}
Let $\mathcal P$ be the analytic Wick algebra with compact one-particle modular support. 
The vectors $\mathcal P\Omega$ are a core for $\cL^{1/2}$: truncate the total degree and then approximate each tensor letter
by compact spectral vectors. 
Initially define
$\delta_0(xD_\varphi^{1/2})=(\partial x)D_{\widehat\varphi}^{1/2}$
for $x\in\mathcal P$.
We have that $\delta_0$ is closable, and its closure $\delta$ has domain exactly $\dom \cL^{1/2}$. 
Because it commutes with the modular groups, its symmetric version is
\begin{equation}\label{eq:symmetricderivation}
 \norm{D_{\widehat\varphi}^{1/4}(\partial x)D_{\widehat\varphi}^{1/4}}_2^2
 =\mathcal E_{2,\varphi}(x).
\end{equation}


Let $\cH_\delta$ be the closed $\Gamma_q$-bimodule generated by the range of $\delta$ inside $L^2(\widehat \Gamma_q)$. 
It is reducing for both normal actions. 
Let
\begin{align*}
    \mathscr{J}_{\delta}=J_{\widehat\varphi}|_{\cH_\delta}, \quad \mathscr{U}_{\delta, s}=\Delta_{\widehat \varphi}^{is} |_{\cH_\delta}.
\end{align*}
Reality and modular covariance on the analytic core, extended by closure, imply that \(\mathcal H_\delta\) is invariant under \(J_{\widehat\varphi}\) and \(\Delta_{\widehat\varphi}^{is}\). 
Hence the following restrictions are well defined.
\begin{align*}
    \delta J_\varphi =\mathscr{J}_{\delta} \delta, \quad \delta\Delta_\varphi^{is}=\mathscr{U}_{\delta, s}\delta.
\end{align*}
The Leibniz rule is
\[
 \delta(xD_\varphi^{1/2}y)
 =j(x)\delta(D_\varphi^{1/2}y)
  +\delta(xD_\varphi^{1/2})j(y).
\]
The algebraic Leibniz rule, reality, modular covariance, and~\eqref{eq:GNSderivation} identify it with the first-order calculus of the number form. In particular the semigroup is $\Gamma$-regular in the terminology of~\cite{Wir25}. 
Equivalently, its carr\'e du champ is
\[
 j(\Gamma(x,y))=\frac12 j\bigl(\cL(x)^*y+x^*\cL(y)-\cL(x^*y)\bigr)
 =\bE((\partial x)^*\partial y),
\]
as follows by differentiating the product twice in~\eqref{eq:compressionrotation}.

Define the auxiliary semigroup and its GNS implementation by
\[
 \Psi_t=\Gamma_q(e^{-t}I\oplus I),\qquad Q_t(z\widehat\Omega)=\Psi_t(z)\widehat\Omega.
\]
The map $\Psi_t$ is unital completely positive, state-preserving, GNS-symmetric, and modularly covariant; moreover $\Psi_tj=j\Phi_t$. On a degree-$k$ word, every summand of $\partial x$ has $k-1$ first-copy letters and one second-copy letter. Thus
\begin{equation}\label{eq:intertwining}
 \partial\Phi_t=e^{-t}\Psi_t\partial.
\end{equation}
The identity extends from the core by closure. If $P_\delta$ is the projection onto $\cH_\delta$ and $T_t$ is the GNS implementation of $\Phi_t$, define
\[
 \vec T_t=e^{-t}P_\delta Q_t|_{\cH_\delta}.
\]
Then $\delta T_t=\vec T_t\delta$ on the form domain. This compression avoids an unnecessary assumption that the auxiliary map preserves the whole minimal bimodule. Wirth's criterion requires a family of bounded operators here, not a semigroup law for $\vec T_t$.


\begin{lemma}\label{lem:cporder}
Let $\Phi:\cB\to \cB$ be normal, unital completely positive, state-preserving and GNS-symmetric for a faithful normal state $\omega$. 
Let $T$ be its GNS implementation, and let $\pi_l,\pi_r$ be the standard left and right actions. 
Then
\begin{equation}\label{eq:cporder}
 T^*\pi_l(\cdot)T\le_{\mathrm{cp}}\pi_l\circ \Phi,
 \qquad T^*\pi_r(\cdot)T\le_{\mathrm{cp}}\pi_r\circ \Phi.
\end{equation}
The right-action inequality is understood on the opposite algebra.
\end{lemma}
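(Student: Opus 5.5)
The plan is to reduce \eqref{eq:cporder} to a Kadison--Schwarz/Stinespring dilation statement. The first step is to give the order a concrete meaning. For maps $\Psi_1,\Psi_2:\cB\to B(L^2(\cB))$, the relation $\Psi_1\le_{\mathrm{cp}}\Psi_2$ means that $\Psi_2-\Psi_1$ is completely positive. The claim is therefore that
\[
x\mapsto \pi_l(\Phi(x))-T^*\pi_l(x)T
\]
is completely positive. Both terms are completely positive, and $T$ is a contraction, since $\Phi$ is unital, state-preserving and Kadison--Schwarz gives $\omega(\Phi(y)^*\Phi(y))\le\omega(y^*y)$.

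Next I will write the Stinespring dilation $\Phi(x)=V^*\rho(x)V$ on a Hilbert space $K$, where $V:L^2(\cB)\to K$ is an isometry because $\Phi$ is unital. It is cleaner to work instead with the GNS dilation attached to $\omega$. Let $K$ be the completion of $\cB\otimes_{\Phi}L^2(\cB)$ under the inner product
\[
\langle a\otimes\xi,b\otimes\eta\rangle=\langle \pi_l(\Phi(b^*a))\xi,\eta\rangle .
\]
Let $\rho$ be the left action of $\cB$ on the first factor, and let $V\xi=1\otimes\xi$. Then
\[
\pi_l\circ\Phi=V^*\rho(\cdot)V .
\]
Now consider the map $W:K\to L^2(\cB)$ defined by
\[
W(a\otimes x\Omega_\omega)=a\,\Phi(x)\Omega_\omega=aT(x\Omega_\omega).
\]
A Kadison--Schwarz computation, applied in matrix form, shows that $W$ is a contraction. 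Concretely, $\sum_{i,j}\langle \Phi(x_j)^*a_j^*a_i\Phi(x_i)\Omega,\Omega\rangle$ is bounded by the $K$-norm because $[\Phi(x_j)^*\,y\,\Phi(x_i)]\le\Phi^{(2)}$-type inequalities hold for positive $y=a^*a$. It is at exactly this point that the positivity $\pi_l(\Phi(b^*a))$ and the state invariance of $\omega$ are used. By construction $W$ intertwines the actions, $W\rho(x)=\pi_l(x)W$, and it satisfies $WV=T$.

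The conclusion then follows directly:
\[
T^*\pi_l(x)T=V^*W^*\pi_l(x)WV=V^*W^*W\rho(x)V .
\]
Here $W^*W$ is a positive contraction commuting with $\rho(\cB)$, by the intertwining property. Hence
\[
\pi_l(\Phi(x))-T^*\pi_l(x)T=V^*(1-W^*W)^{1/2}\rho(x)(1-W^*W)^{1/2}V ,
\]
which is completely positive.

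For the right action I will not repeat the construction. Instead I will apply the left-action result to the opposite algebra, using the modular conjugation: $\pi_r(x)=J\pi_l(x^*)J$. GNS symmetry ensures that $T$ commutes with $J$, since symmetry implies KMS symmetry and hence $JTJ=T$. So $T^*\pi_r(\cdot)T=J\,T^*\pi_l((\cdot)^*)T\,J$, and conjugation by the antiunitary $J$ converts the left inequality into the right one on $\cB^{op}$.

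The main obstacle is checking that $W$ is well defined and contractive. This is the one place where state preservation enters. It requires showing that $\|\sum_i a_iT(x_i\Omega)\|^2$ is dominated by $\sum_{i,j}\omega(x_j^*\Phi(a_j^*a_i)x_i)$, with the roles of $a$ and $x$ correctly placed. I expect to need GNS symmetry to move $\Phi$ across, namely $\omega(\Phi(y)z)=\omega(y\Phi(z))$, so that the matrix inequality
\[
[\Phi(x_j)^*\Phi(x_i)]\le[\Phi(x_j^*x_i)]
\]
becomes applicable.
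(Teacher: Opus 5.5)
Your architecture is sound. Your $W\colon K\to L^2(\cB)$, $a\otimes x\Omega\mapsto a\Phi(x)\Omega$, is exactly the adjoint of the isometry $y\Omega\mapsto\rho(y)V\Omega=y\otimes\Omega$ used in the paper. Once $W$ is known to be a contraction, the rest of your argument is correct: the intertwining relation, $WV=T$, the factorization through $(1-W^*W)^{1/2}$, and the $J$-conjugation for the right action.

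The gap is the step you flag yourself. The estimate you aim for,
\[
\Big\|\sum_i a_i\Phi(x_i)\Omega\Big\|^2\le\sum_{i,j}\omega\bigl(x_j^*\Phi(a_j^*a_i)x_i\bigr),
\]
is equivalent to the left-hand inequality of the lemma. To see this, test the $n$-th amplification on the positive matrices $[a_j^*a_i]$ and the vectors $(x_i\Omega)$. So isolating it is not yet a reduction.

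The tool you propose also does not prove it. No operator inequality such as $\Phi(x)^*y\Phi(x)\le\Phi(x^*yx)$ or $\Phi(x)^*y\Phi(x)\le x^*\Phi(y)x$ holds before applying $\omega$. For the GNS-symmetric map $\Phi=\omega(\cdot)1$ with $x=1$ and $y$ a nontrivial projection, both would read $y\le\omega(y)1$, which is false.

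Using $\omega(\Phi(u)v)=\omega(u\Phi(v))$ to move $\Phi$ so that matrix Kadison--Schwarz applies works only for a trace. On the right-hand side, $\Phi(a_j^*a_i)$ sits between $x_j^*$ and $x_i$ and cannot be cycled off. In the non-tracial case that route needs further modular input, which is not in your plan. For example, for analytic $x$ one would use $\Phi(x)\Omega=J\Phi(\sigma_{-i/2}(x^*))J\Omega$, which relies on $JT=TJ$, to turn left multiplications into right ones.

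The missing idea is to pair $W$ with vectors instead of estimating $\|W\eta\|^2$ directly. By GNS symmetry,
\[
\langle a\Phi(x)\Omega,y\Omega\rangle=\omega(y^*a\Phi(x))=\omega(\Phi(y^*a)x)=\langle a\otimes x\Omega,\,y\otimes\Omega\rangle_K .
\]
So your $W$ is the adjoint of $W_0\colon y\Omega\mapsto y\otimes\Omega$. The map $W_0$ is an isometry by invariance alone, since $\|y\otimes\Omega\|_K^2=\omega(\Phi(y^*y))=\omega(y^*y)$.

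Defining $W:=W_0^*$ settles well-definedness and contractivity at once. In fact $W$ is a coisometry, and $W^*W=W_0W_0^*$ is a projection commuting with $\rho(\cB)$. No Kadison--Schwarz inequality is needed anywhere.

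With this repair your proof coincides with the paper's. The paper works directly with $W_0$ and reads off $W_0^*V=T$ from the same symmetry identity.
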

\begin{proof}
Take a Stinespring representation $\pi_l(\Phi(a))=V^*\pi(a)V$, where $\pi$ is a $*$-homomorphism of $\cB$ and $V$ is a contraction. 
The map $W$ defined as 
\[
 W(x\Omega)=\pi(x)V\Omega
\]
is isometric, since
$\norm{\pi(x)V\Omega}^2=\omega(\Phi(x^*x))=\omega(x^*x)$, where $\Omega$ is the cyclic and separating vector for $\omega$. 
It intertwines the left actions. 
GNS symmetry gives, on the dense GNS core,
\[
 \ip{W^*Vy\Omega}{x\Omega}
 =\omega(\Phi(x^*)y)=\omega(x^*\Phi(y))
 =\ip{Ty\Omega}{x\Omega}.
\]
Hence $W^*V=T$. 
The projection $P=WW^*$ commutes with $\pi(\cB)$, because the range of $W$ reduces $\pi(\cB)$. 
Consequently
\[
 \pi_l(\Phi(a))-T^*\pi_l(a)T
 =V^*\pi(a)(1-P)V.
\]
This is a completely positive map of $a$: it is a Stinespring compression of the representation on $(1-P)\mathcal K$. 
It proves the left inequality with all matrix amplifications. 
The GNS implementation of a GNS-symmetric Markov map commutes with modular conjugation. 
Conjugating the left inequality by that conjugation gives the right inequality on $\cB^{\mathrm{op}}$.
\end{proof}

Apply Lemma~\ref{lem:cporder} to the unital completely positive map $\Psi_t$ whose GNS implementation is $Q_t$ and restrict the argument to $j(\Gamma_q)$. 
Compressing to the space $\cH_\delta$ preserves the complete-positive-order estimate, since $P_\delta$ commutes with both module actions $\pi_l, \pi_r$; 
discarding its orthogonal complement adds a completely positive map.
Therefore, we obtain
\begin{align}\label{eq:wirthhyp}
 \vec T_t^*\pi_l^{\cH_\delta}(\cdot)\vec T_t
 &\le_{\mathrm{cp}}e^{-2t}\pi_l^{\cH_\delta}\circ\Phi_t,\notag\\
 \vec T_t^*\pi_r^{\cH_\delta}(\cdot)\vec T_t
 &\le_{\mathrm{cp}}e^{-2t}\pi_r^{\cH_\delta}\circ\Phi_t.
\end{align}


\begin{theorem}
Suppose that $\{\Phi_t\}_{t\geq 0}$ is a $q$-Araki-Woods Ornstein-Uhlenbeck semigroup on $\Gamma_q(\cH_{\RR},U)$ with no boundedness assumption on $A$. 
Then the semigroup satisfies $\cfm(2)$ and $\cmlsi(2)$.
In particular, for a $\sigma$-finite ancillary von Neumann algebra $\cB$ and any normal state $\rho$ on $\Gamma_q\,\overline\otimes\,\cB$,
\begin{equation}\label{eq:completeentropy}
 D\!\left((\Phi_{t}^*\otimes\Id)(\rho)\,\middle\Vert\,\varphi\otimes\rho_{\cB}\right)
 \le e^{-2t}D(\rho\Vert\varphi\otimes\rho_{\cB}).
\end{equation}
Infinite relative entropy is allowed on the right. 
If $\cH_{\RR}\ne\{0\}$, the decay rate $2$ and the scalar constant $\alpha_1=1/2$ are optimal.
\end{theorem}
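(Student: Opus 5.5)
The plan is to feed the intertwining data built above into Wirth's criterion \cite{Wir25}. First, $\cfm(2)$ comes out of the complete-positive-order estimates, and then $\cmlsi(2)$ follows by integrating Fisher information along the flow. The ingredients are already in place.
\begin{itemize}
\item The semigroup is $\Gamma$-regular, with first-order calculus $(\cH_\delta,\mathscr J_\delta,\mathscr U_\delta,\delta)$. The closure $\delta$ has domain $\dom\cL^{1/2}$, and \eqref{eq:symmetricderivation} identifies $\norm{\delta\,\cdot}^2$ with the Dirichlet form.
\item The bounded operators $\vec T_t=e^{-t}P_\delta Q_t|_{\cH_\delta}$ satisfy $\delta T_t=\vec T_t\delta$.
\item They obey the two-sided order bounds \eqref{eq:wirthhyp} with factor $e^{-2t}$.
\end{itemize}
Wirth's intertwining theorem says that $\delta T_t=\vec T_t\delta$ together with $\vec T_t^*\pi_{l/r}(\cdot)\vec T_t\le_{\mathrm{cp}} e^{-2\kappa t}\pi_{l/r}\circ\Phi_t$ yields $\cfm(\kappa)$. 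This is because the Fisher information can be written as $\langle \delta(\cdot), \text{(operator mean of }\pi_l,\pi_r\text{ of the density)}\,\delta(\cdot)\rangle$, and operator means are monotone and jointly concave under cp-order. Applying it with $\kappa=1$ per module factor gives the decay $e^{-2t}$ of $\cI_\cL$, i.e.\ $\cfm(2)$.

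For the complete version, I will tensor everything with $\Id_\cB$. The data $\delta\otimes\Id$, $\vec T_t\otimes\Id$ satisfy the same intertwining, and the cp-order inequalities are stable under amplification because they are \emph{completely} positive. Wirth's theorem is stated for $\sigma$-finite ancillas, and it gives $\cfm(2)$ for $\Phi_t\otimes\Id_\cB$ with no boundedness assumption on $A$. The operator $A$ never entered the intertwining \eqref{eq:intertwining}, only the number grading.

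Next I pass from $\cfm(2)$ to $\cmlsi(2)$ by the standard de Bruijn argument:
\[
\frac{d}{dt}D(\rho_t\Vert\varphi\otimes\rho_\cB)=-\cI(\rho_t),
\]
with $\rho_t=(\Phi_t^*\otimes\Id)(\rho)$. The entropy tends to $0$ as $t\to\infty$, since $\Phi_t\to\varphi(\cdot)1$ and entropy is lower semicontinuous. Integrating gives
\[
D(\rho)=\int_0^\infty\cI(\rho_s)\,ds\le \cI(\rho)\int_0^\infty e^{-2s}\,ds=\tfrac12\cI(\rho),
\]
which is $\cmlsi(2)$. I expect this limiting and regularity step to be the main technical obstacle. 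One must restrict to regular $\rho$, bounded above and below by multiples of $\mathbb E^*\rho$ and lying in $\dom\cL_*$, where \cite[Proposition 2.5]{Wir25} justifies the derivative. Then general normal states are reached by the mixture $(1-\epsilon)\Phi_s^*\rho+\epsilon(\varphi\otimes\rho_\cB)$ and lower semicontinuity, with the value $+\infty$ allowed. Grönwall applied to the $\cmlsi(2)$ inequality gives \eqref{eq:completeentropy} for these regularised states, and letting $\epsilon,s\to0$ gives it in general.

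For optimality when $\cH_\RR\neq0$, I test with $\rho_\varepsilon$ of density $D_\varphi^{1/2}(1+\varepsilon a)D_\varphi^{1/2}$, where $a$ is a centered self-adjoint first-chaos field of compact modular support. This $a$ exists by the band construction before Proposition~\ref{prop:necessary}, or by a frequency-zero field when $A=1$. By Lemma~\ref{lem:entTaylor}, $D(\rho_\varepsilon\Vert\varphi)=\frac{\varepsilon^2}{2}\langle\xi_a,\vartheta\xi_a\rangle+O(\varepsilon^3)$. Since $\Phi_t a=e^{-t}a$, the same lemma gives $D(\Phi_t^*\rho_\varepsilon)=e^{-2t}\frac{\varepsilon^2}{2}\langle\xi_a,\vartheta\xi_a\rangle+O(\varepsilon^3)$. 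Hence no rate exceeding $2$ is possible in \eqref{eq:completeentropy}, and so none in $\mathrm{MLSI}$ either. The scalar constant $\alpha_1=\kappa/4=1/2$ under the $p=1$ normalization $\frac{pp'}{4}$ is saturated by the same first-order perturbation.
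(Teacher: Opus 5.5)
Your architecture is the paper's. The assembled ingredients ($\Gamma$-regularity, the closed calculus with domain $\dom\cL^{1/2}$, $\delta T_t=\vec T_t\delta$, and \eqref{eq:wirthhyp}) go into \cite[Theorem 3.5]{Wir25}, and optimality is tested on $\rho_\varepsilon$ via Lemma~\ref{lem:entTaylor}. The optimality half is fine once you add, as the paper does, that $\langle\xi_a,\vartheta(\log\Delta_\varphi)\xi_a\rangle>0$ (because $\vartheta>0$ and $a\neq0$), so the leading coefficient can be divided out before letting $\varepsilon\to0$. There is also a labelling slip: a cp-order bound with factor $e^{-2Kt}$ yields $\cfm(2K)$, not $\cfm(K)$. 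With $K=1$ this is the $\cfm(2)$ you correctly conclude.

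The genuine gap is in the part you derive yourself rather than cite, namely $\cfm(2)\Rightarrow\cmlsi(2)\Rightarrow$ \eqref{eq:completeentropy} for all normal states.

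First, the de Bruijn integration needs $D(\rho_T\Vert\varphi\otimes\rho_\cB)\to0$. Lower semicontinuity only gives $\liminf_{T} D(\rho_T\Vert\varphi\otimes\rho_\cB)\ge0$, which is vacuous; you need an upper bound. For regular $\rho$ one can bound $D$ by the sandwiched R\'enyi-$2$ divergence. This equals $\log(1+\norm{x_T-1}_{2,\varphi\otimes\rho_\cB}^2)$ for the symmetric relative density $x_T=(\Phi_T\otimes\Id)(x)$, and it decays because the amplified number operator has spectral gap $1$ off its kernel. That step in turn needs the ergodicity you only assert. The kernel of the number operator is $\CC\Omega$ and $\Omega$ is separating, so the fixed algebra of $\Phi_t\otimes\Id$ is $1\otimes\cB$, which is what makes $\varphi\otimes\rho_\cB$ the right reference state.

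Second, your regularization $(1-\epsilon)\Phi_s^*\rho+\epsilon(\varphi\otimes\rho_\cB)$ is bounded below by $\epsilon(\varphi\otimes\rho_\cB)$. It need not be bounded above by any multiple of $\varphi\otimes\rho_\cB$, especially after amplification by an infinite-dimensional $\cB$. So it is not regular in the sense required for \cite[Proposition 2.5]{Wir25}, and an additional upper truncation with control of the entropy would still be needed.

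The paper avoids both issues. It takes $\cmlsi(2)$ directly from \cite[Theorem 3.5]{Wir25}. It takes the decay for arbitrary normal states, with $+\infty$ allowed, from the entropy equivalence theorem \cite[Theorem 2.7]{Wir25}, after identifying the fixed algebra as $1\otimes\cB$. Either cite these as the paper does, or supply the two missing arguments.
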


\begin{proof}
We have verified $\Gamma$-regularity, the closed first-order calculus, intertwining, and both complete-positivity-order bounds~\eqref{eq:wirthhyp}. 
Wirth's intertwining theorem~\cite[Theorem 3.5]{Wir25} therefore applies with $K=1$, and yields $\cfm(2)$ and $\cmlsi(2)$.
This shows that the entropy decay rate $\kappa\geq 2$.
Its entropy equivalence theorem~\cite[Theorem 2.7]{Wir25} gives exponential decay. The fixed algebra of $\Phi_t$ is scalar: the zero eigenspace of the number operator is $\CC\Omega$, and the vacuum is separating. 
After amplification the fixed algebra is $1\otimes \cB$, and the conditional expectation has predual $\rho\mapsto\varphi\otimes\rho_{\cB}$. 
This proves exactly~\eqref{eq:completeentropy}. 
The complete inequalities permit arbitrary ancillary states, not just product inputs. 
Non-faithful marginals can be handled on their support or by a faithful approximation.

For the sharpness, we choose a nonzero centered self-adjoint first-chaos element $a$ with compact modular spectral support, and define states
\[
 D_{\rho_\varepsilon}=D_\varphi^{1/2}(1+\varepsilon a)D_\varphi^{1/2}.
\]
They are positive and normalized for sufficiently small real $\varepsilon$. 
The GNS symmetry implies $\Phi_{t}^*(\rho_\varepsilon)=\rho_{e^{-t}\varepsilon}$.
By Lemma \ref{lem:entTaylor}, we see that the relative entropy has the following expansion,
\[
 D(\rho_\varepsilon\Vert\varphi)=\frac{1}{2}\varepsilon^2 \ip\xi{\vartheta(\log\Delta_\varphi)\xi}+o(\varepsilon^2),
\]
where $\xi=\Delta_\varphi^{1/4}a\Omega$.
The bounded spectral kernel is strictly positive at every finite frequency, so the coefficient is finite and nonzero. 
An entropy decay rate $\kappa$ would force $e^{-2t}\le e^{-\kappa t}$, hence $\kappa\le2$. 
This proves the constants in the inequalities are optimal.

In the normalization of~\eqref{qubit:energy}, the limiting form is $\mathcal E_{1, \varphi}=\mathcal I_{\cL}/4$, with $\mathcal I_{\cL}$ the entropy production, so the optimal scalar $\alpha_1$ equals $1/2$.
\end{proof}


\section{Reverse Hypercontractivity}
In this section, we shall obtain the reverse hypercontractivity for $q$-Araki-Woods Ornstein-Uhlenbeck semigroups.

For $0<p<1$ and $x\ge0$ use the same positive Haagerup expression~\eqref{main:norm}; it is a quasi-norm. 
For $x \geq \varepsilon 1$ for $\varepsilon>0$ and $p<0$, we define
\begin{align*}
    \|x\|_{p, \varphi}=\|x^{-1}\|_{-p, \varphi}^{-1}.
\end{align*}
We do not define a single two-sided value at $p=0$.


Now we shall apply a similar argument in Section \ref{sec:cmlsi} to show that the Ornstein-Uhlenbeck semigroup on the baby-Fock model has $\cmlsi(2)$ and the modified logarithmic Sobolev constant $\displaystyle \alpha_1(\cL_n^{\varepsilon})\geq \frac{1}{2}$ for all $n$.

Normalize a spin generator by $\widetilde{\gamma}_i=(\mu_i^2+\mu_i^{-2})^{-1/2}\gamma_i$, so $\widetilde{\gamma}_i^2=0$ and $\widetilde{\gamma}_i^*\widetilde{\gamma}_i+\widetilde{\gamma}_i\widetilde{\gamma}_i^*=1$. 
Double each underlying mode to $(i,0)$ and $(i,1)$. 
Use sign $-1$ between the two copies of the same mode, and sign $\varepsilon_{ij}$ between every pair of copies of distinct underlying modes. 
Give the two copies identical thermal parameters. 
The map
\begin{align*}
 \widetilde{\gamma}_{i,0}&\longmapsto\cos\theta\,\widetilde{\gamma}_{i,0}+\sin\theta\,\widetilde{\gamma}_{i,1},\\
\widetilde{\gamma}_{i,1}&\longmapsto-\sin\theta\,\widetilde{\gamma}_{i,0}+\cos\theta\,\widetilde{\gamma}_{i,1}
\end{align*}
preserves the defining relations. 
For distinct underlying modes, bilinearity and the identical signs for all copy combinations preserve every mixed relation. These maps therefore define $*$-automorphisms.

Let $v_{i,e}=\widetilde{\gamma}_{i,e}^*\widetilde{\gamma}_{i,e}$, $e=0,1$. 
The doubled thermal density is, up to normalization,
\[
 \prod_i (a_i/b_i)^{v_{i,0}+v_{i,1}}.
\]
The rotation preserves every sum $v_{i,0}+v_{i,1}$, hence preserves the state. 
For $r_i=\varphi(v_{i,e})=a_i$,
\begin{align*}
 \alpha_\theta(v_{i,0}-r_i)
 &=\cos^2\theta(v_{i,0}-r_i)+\sin^2\theta(v_{i,1}-r_i)\\
 &\quad+\sin\theta\cos\theta(\widetilde{\gamma}_{i,0}^*\widetilde{\gamma}_{i,1}+\widetilde{\gamma}_{i,1}^*\widetilde{\gamma}_{i,0}).
\end{align*}
It follows that compression of this rotation multiplies a degree-$k$ basis word by $(\cos\theta)^k$. Its derivative has exactly one copy-$1$ odd factor and copy-$0$ degree $k-1$.

The auxiliary map damping only the copy-$0$ degrees is completely positive. 
To see this without assuming a multivariable tensorization result, put any individual mode last in the occupation realization. 
Its degree damping is exactly its thermal qubit channel tensored with the identity on the remaining algebra. 
Such single-mode maps are unital completely positive, state-preserving, GNS-symmetric and diagonal in the common ordered basis. 
They commute. 
Their product over the copy-$0$ modes is the required auxiliary semigroup.

The rotation norm identity, intertwining, and Stinespring comparison from Section~\ref{sec:cmlsi} now apply in finite dimensions.
They give, uniformly in signs, mode number, and thermal weights,
\begin{equation}\label{eq:spinMLSI}
 \cmlsi(2)\quad\text{and in particular}\quad \alpha_1(\cL_n^{\varepsilon})\ge\frac12.
\end{equation}

\begin{proposition}\label{prop:finiteReverse}
Suppose that $\{\Phi_t^{(n)}\}_{t\geq 0}$ is the Ornstein-Uhlenbeck semigroup on the baby-Fock model.
Then 
\begin{equation}\label{eq:reverse1}
 \norm{\Phi_t^{(n)} (X)}_{p, \tau_n^{\varepsilon}}\ge\norm X_{r,\tau_n^{\varepsilon}}
 \quad\text{whenever}\quad
 t\ge\frac12\log\frac{1-p}{1-r},
\end{equation}
in each of the following cases:
\begin{enumerate}
 \item $0<p<r<1$, with $X\ge0$;
 \item $-\infty<p<r<0$, with invertible $X\ge 0$.
\end{enumerate}    
\end{proposition}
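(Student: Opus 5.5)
The plan is to deduce \eqref{eq:reverse1} from the modified logarithmic Sobolev inequality \eqref{eq:spinMLSI}, using the reverse $p$-LSI machinery of Beigi--Datta--Rouz\'e \cite{BDR} for GNS-symmetric semigroups on finite-dimensional algebras. The baby-Fock algebra $\Gamma_{\langle 1,\ldots,n\rangle}$ is finite dimensional and $\Phi_t^{(n)}$ is GNS-symmetric with respect to $\tau_n^{\varepsilon}$, so their framework applies verbatim. The key input from \cite{BDR} is a comparison of the $s$-LSI constants for $s<1$ with the $1$-LSI (MLSI) constant: for $s\in(-\infty,1)\setminus\{0\}$,
\[
 \mathcal E_{s,\tau_n^{\varepsilon}}(X)\ge \alpha_1(\cL_n^{\varepsilon})\,\Ent_{s,\tau_n^{\varepsilon}}(X),
\]
with the Dirichlet form \eqref{qubit:energy} and entropy \eqref{qubit:entropy} extended to these $s$ (the factor $ss'/4$ keeps the correct sign). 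Combined with $\alpha_1(\cL_n^{\varepsilon})\ge 1/2$, this gives the constant $1/2$ uniformly in $n$, the signs and the thermal weights.

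Given this, the Gross-type argument of Corollary~\ref{qubit:HC} is run backwards. First take $X$ strictly positive and invertible. Let $s(t)$ solve $\dot s=4(s-1)\alpha$ with $\alpha=1/2$, i.e. $\dot s=2(s-1)$, started at $s(0)=r$. Then $1-s(t)=(1-r)e^{2t}$, so $s$ decreases and reaches $p$ exactly at $t=\frac12\log\frac{1-p}{1-r}$. Combining \eqref{qubit:p-norm-derivative} and \eqref{qubit:time-norm-derivative}, both of which remain valid for $s<1$, $s\ne0$ in finite dimensions, gives
\[
 \frac{d}{dt}\log\norm{\Phi_t^{(n)}(X)}_{s(t)}=\frac{\dot s\,\Ent_{s}-4(s-1)\mathcal E_{s}}{s^2\norm{\cdot}_s^s}.
\]
This quantity is nonnegative, because $s-1<0$ and $\dot s=4(s-1)\alpha$ with $\mathcal E_s\ge\alpha\Ent_s$. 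Hence $\norm{\Phi_t^{(n)}(X)}_{s(t)}\ge\norm X_r$.

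The two ranges are handled separately because the flow must not cross $s=0$. For $0<r<1$ we have $s(t)\in(0,1)$ throughout. For $r<0$ the flow keeps $s(t)<0$ and goes down to $p<0$; here the norm is $\norm{x^{-1}}_{-s}^{-1}$, and one checks that the derivative identity holds in this convention. For larger times $t$, I use that for fixed $s<1$ the map $t\mapsto\norm{\Phi_t(X)}_s$ is nondecreasing, since $\mathcal E_s\ge0$ by the LSI and positivity of entropy. Case (1) extends from invertible to semidefinite $X$ by approximation with $X+\epsilon I$, using continuity of the positive quasi-norm for $s>0$.

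The main obstacle is the transfer step from MLSI to the $s$-LSI for $s<1$. In \cite{BDR} this is derived from a Stroock--Varopoulos type inequality in the non-tracial, GNS-symmetric setting, and I would quote their theorem for $0<s<1$ and $s<0$ directly. If the normalization is not directly available, the fallback is to obtain it from $\cmlsi(2)$. The idea is to differentiate the entropy-decay bound \eqref{eq:completeentropy} on the finite algebra along the map $I_{s,2}$-type power reparametrizations, and to use $\Ent_{s}(I_{s,1}(X))=\Ent_1(X)$ together with a monotonicity of $s\mapsto \mathcal E_s(I_{s,1}(X))/(ss')$. A secondary technical point is checking that the sign conventions in \eqref{qubit:time-norm-derivative} and in the negative-exponent norm are consistent, so that the monotonicity direction is correct in case (2).
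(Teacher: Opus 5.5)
Your case (1) is the paper's argument: the Stroock--Varopoulos comparison of \cite{BDR} with \eqref{eq:spinMLSI} on $0<s<1$, the flow $s(t)=1-(1-r)e^{2t}$, monotonicity in $t$ for larger times, and approximation by $X+\eta 1$. For case (2) you take a genuinely different route.

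The paper never uses entropy or energy at negative exponents. It writes $\norm{\Phi_t^{(n)}(X)}_{p,\sigma}$ through the weighted reverse H\"older formula with $p'\in(0,1)$, moves $\Phi_t^{(n)}$ onto the test matrix by KMS symmetry, and applies case (1) from $p'$ to $r'$, using $(1-r')/(1-p')=(1-p)/(1-r)$.

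You instead run the flow inside $(-\infty,0)$, and this is legitimate. Your convention $\norm{X}_{s}=\norm{X^{-1}}_{-s}^{-1}$ coincides with $(\Tr\,\bj^{1/s}(X)^s)^{1/s}$, so both derivative identities hold verbatim for positive definite $X$. The flow also never reaches $s=0$, which is where the cross-zero curve fails (Remark~\ref{rem:crosszero}).

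One input should not rest on a bare citation, since the paper deliberately uses \cite{BDR} only at positive exponents: the inequality $\mathcal E_s\ge\frac12\Ent_s$ for $s<0$. It follows exactly from the $(0,1)$ case. Write $\sigma$ for the density of $\tau_n^{\varepsilon}$, and set $Y=\bj^{1/s}(X)$ and $Z=\bj^{-1/s'}(Y^{s-1})$. Since $(s-1)(s'-1)=1$, you get $\bj^{1/s'}(Z)^{s'}=Y^{s}$ and $\bj^{1/s'}(Z)^{s'-1}=Y$. Hence $\Ent_{s'}(Z)=\Ent_s(X)$, and by KMS symmetry
\[
 \mathcal E_{s'}(Z)=\tfrac{ss'}{4}\Tr\bigl(\sigma^{1/2}X\sigma^{1/2}\cL(Z)\bigr)
 =\tfrac{ss'}{4}\Tr\bigl(\sigma^{1/2}\cL(X)\sigma^{1/2}Z\bigr)=\mathcal E_s(X).
\]
Here $s'\in(0,1)$, and $X\mapsto Z$ is a bijection of the positive definite cone.

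So both proofs use the same two ingredients: the $(0,1)$ inequality and KMS duality. The paper applies the duality to the semigroup and norms; you apply it to the generator. The paper's version needs the weighted reverse H\"older variational formula but no calculus at negative exponents. Yours gives a single Gross-type flow for both ranges and avoids reverse H\"older. Your CMLSI-differentiation fallback is unnecessary.
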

\begin{proof}
    In a finite model the definitions of entropy and energy extend to $0<s<1$ using the same positive-definite matrix formulas, with $s'=s/(s-1)<0$. The finite-dimensional Stroock--Varopoulos inequality~\cite[Theorem 14 and Corollary 16]{BDR}, together with~\eqref{eq:spinMLSI}, gives
\[
 \mathcal E_{s, \tau_n^{\varepsilon}}(X)\ge\frac12\Ent_{s, \tau_n^{\varepsilon}}(X),\qquad 0<s<1,
\]
for positive invertible $X$. 

Fix $0<p<r<1$ and let $s(t)=1-(1-r)e^{2t}$. Until $s(t)=p$ this exponent remains strictly positive. The matrix differentiation formulas give
\begin{align*}
 \frac d{dt}\log\norm{\Phi_t^{(n)}(X)}_{s(t)}
 &=\frac{\dot s\Ent_{s, \tau_n^{\varepsilon}}(\Phi_t^{(n)}(X))-4(s-1)\mathcal E_{s, \tau_n^{\varepsilon}}(\Phi_t^{(n)}(X))}{s^2\norm{\Phi_t^{(n)}(X)}_s^s}\\
 &=\frac{4(1-s)(\mathcal E_{s, \tau_n^{\varepsilon}}(\Phi_t^{(n)}(X))-\frac12\Ent_{s, \tau_n^{\varepsilon}}(\Phi_t^{(n)}(X)))}{s^2\norm{\Phi_t^{(n)}(X)}_s^s}\ge0.
\end{align*}
At the terminal time we have
\begin{equation}\label{eq:positiveReverse}
 \norm{\Phi_t^{(n)}(X)}_p\ge\norm X_r,\qquad
 t=\frac12\log\frac{1-p}{1-r}.
\end{equation}
The same energy comparison gives reverse contractivity at each fixed $0<s<1$, so all larger times also work. 
Approximating $X$ by $X+\eta 1$ proves the inequality for $X\ge0$.

If $-\infty<p<r<0$, their conjugates satisfy $0<r'<p'<1$. Weighted reverse H\"older duality gives, for positive-definite matrices $Z$,
\[
 \norm Z_{p,\sigma}
 =\inf_{Y>0}\frac{\langle Z,Y\rangle_\sigma}{\norm Y_{p',\sigma}},
 \qquad \langle X,Y\rangle_\sigma=\Tr(X\sigma^{1/2}Y\sigma^{1/2}).
\]
This identity follows from the ordinary Schatten reverse H\"older variational formula after the symmetric weight conjugations; the exponents match because $1/p+1/p'=1$. 
KMS symmetry and Inequality~\eqref{eq:positiveReverse}, now applied from exponent $p'$ to $r'$, yield
\begin{align*}
 \norm{\Phi_t^{(n)}(X)}_{p,\sigma}
 &=\inf_{Y>0}\frac{\langle X,\Phi_t^{(n)}(Y)\rangle_\sigma}{\norm Y_{p',\sigma}}\\
 &\ge\norm X_{r,\sigma}
       \inf_{Y>0}\frac{\norm{\Phi_t^{(n)}(Y)}_{r',\sigma}}{\norm Y_{p',\sigma}}
 \ge\norm X_{r,\sigma}.
\end{align*}
The time is unchanged because $\displaystyle \frac{1-r'}{1-p'}=\frac{1-p}{1-r}$. 
This proof stays away from zero throughout.
\end{proof}

Raynaud's compatible ultraproduct theorem~\cite[Theorems 3.1, 3.6 and 5.1]{Raynaud} applies to every $0<s<\infty$, 
preserves the positive cones and bimodule action, and therefore applies to symmetric positive quasi-norms. 
Use the bounded positive representatives compatible with Nou's state-support embedding and its Ornstein-Uhlenbeck semigroup intertwining. 
Then both the input and the evolved quasi-norms converge along the same ultrafilter. 

\begin{theorem}\label{thm:reverse}
Suppose that $\{\Phi_t\}_{t\geq 0}$ is a $q$-Araki-Woods Ornstein-Uhlenbeck semigroup with no boundedness assumption on $A$.
Then
\begin{equation}\label{eq:reverse}
 \norm{\Phi_t (x)}_{p,\varphi}\ge\norm x_{r,\varphi}
 \quad\text{whenever}\quad
 t\ge\frac12\log\frac{1-p}{1-r},
\end{equation}
in each of the following cases:
\begin{enumerate}
 \item $0<p<r<1$, with bounded $x\ge0$;
 \item $-\infty<p<r<0$, with bounded $x\ge\varepsilon1$ for some $\varepsilon>0$.
\end{enumerate}
\end{theorem}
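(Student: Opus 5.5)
The plan is to transfer Proposition~\ref{prop:finiteReverse} from the baby-Fock models to $\Gamma_q$ through the same central-limit and ultraproduct machinery used for Theorem~\ref{thm:cross2r}. The finite-model inequality \eqref{eq:reverse1} holds uniformly in the signs, in the number of modes and in the thermal weights. Because of that uniformity, no boundedness of $A$ is needed: the constant $\tfrac12\log\frac{1-p}{1-r}$ is independent of $\beta$, which is exactly why the statement allows unbounded $A$. I would first treat finitely many almost-periodic modes, then infinitely many modes, and finally general $A$ via Nou's discretization, as in the proof of Theorem~\ref{thm:cross2r}.

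\textbf{Case (1), $0<p<r<1$.} Take $x=Z\ge0$ in the $*$-algebra generated by finitely many $g_{q,i}$. It is convenient to use inputs of the form $x=Q^*Q$ with $Q$ a polynomial; since $\Phi_t$ is positive, inequality \eqref{eq:reverse} extends afterwards to general bounded $x\ge0$. Set $Z_m=Q(s_m)^*Q(s_m)\ge0$. Then \eqref{eq:reverse1} gives
\[
\norm{\Phi_t^{(m)}(Z_m)}_{p,\tau_m^\varepsilon}\ge\norm{Z_m}_{r,\tau_m^\varepsilon}.
\]
Pass to the ultralimit using \eqref{eq:raynaudnorm}, which by the preceding remark is valid for quasi-norms $0<s<1$ on positive bounded lifts in $\{e\}'$. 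Also use the intertwining $\overline{\mathcal Q}_t\circ\iota=\iota\circ\Phi_t$, together with $\mathcal Q_t(\{e\}')\subseteq\{e\}'$, to identify the limit of the evolved quasi-norms with $\norm{\Phi_t(x)}_{p,\varphi}$. For $s<1$ one can no longer use the $L^2$-approximation of Lemma~\ref{lem:approximation} via the triangle inequality. Instead, I would use the quasi-triangle inequality together with
\[
\norm{Z_m'}_{s}\le\norm{Z_m'}_{2}\qquad\text{for } s\le 2,
\]
applied to the difference $Z_m'=\Phi_t^{(m)}(Z_m)-R_t(s_m)$. That bound does hold for quasi-norms in a state-normalized setting.

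\textbf{Density step.} For general bounded $x\ge0$, choose uniformly bounded positive polynomials $p_j^*p_j\to x$ strongly$^*$. Then the symmetric $L^2$ embeddings converge, and since $\Phi_t$ is $L^2$-continuous so do their images. The positive quasi-norms are continuous in $L^2$ for exponents below $2$ on uniformly bounded positive sets. This continuity follows from the $s$-quasi-triangle inequality for $L^s$ when $s<1$, or from the operator-concave power map; the appropriate tool is the Powers--St\o rmer/Ando type estimate $\norm{a^s-b^s}_{1/s}\le\norm{a-b}$ in trace norm form. The infinitely-many-modes and general-$A$ reductions then proceed exactly as in Theorem~\ref{thm:cross2r}.

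\textbf{Case (2), $-\infty<p<r<0$.} By definition $\norm{x}_{p,\varphi}=\norm{x^{-1}}_{-p,\varphi}^{-1}$. The cleanest route is to repeat the duality argument of Proposition~\ref{prop:finiteReverse} directly in $\Gamma_q$. Weighted reverse H\"older duality gives
\[
\norm{Z}_{p,\varphi}=\inf_{Y}\frac{\langle Z,Y\rangle_\varphi}{\norm{Y}_{p',\varphi}},
\]
with $0<p'<1$ and the infimum over bounded $Y\ge\eta1$. Combining this with KMS symmetry of $\Phi_t$ and Case (1) applied from $p'$ to $r'$, with the identical time, gives the result. The main obstacle is justifying this variational formula in the Haagerup setting for non-tracial $\varphi$, with inputs bounded below. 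I would prove it by reduction to the finite-dimensional formula via Haagerup reduction, using the expectations $\mathbb E_n$ as in Lemma~\ref{lem:hessian}. Alternatively, approximate through the ultraproduct, noting that $Z^{-1}$ is bounded and its polynomial approximants can be taken uniformly bounded below. I would rely on Case (1) for the quasi-norm convergence of $\norm{Y}_{p',\varphi}$ and on $L^2$-continuity of the pairing.
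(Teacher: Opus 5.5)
\textbf{There is a genuine gap: the bounded lifts.} Your plan for case (1) is the paper's. You transfer Proposition~\ref{prop:finiteReverse} through the Groh--Raynaud ultraproduct using \eqref{eq:raynaudnorm}, the corner intertwining and $\mathcal Q_t(\{e\}')\subseteq\{e\}'$. However, the step that makes this transfer possible is missing.

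The raw approximants $Z_m=Q(s_m)^*Q(s_m)$ need not be uniformly bounded in $m$; this is why Nou's embedding is built from the cutoff generators $\widetilde s_{i,m}$. So $[Z_m]_{\mathcal U}$ is not an element of $\mathcal R$. Moreover, \eqref{eq:raynaudnorm} requires uniformly bounded lifts $a\in\{e\}'$ with $eae=\iota(x)$, so it cannot be applied to $Z_m$ or to $\Phi_t^{(m)}(Z_m)$.

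Your fallback does not repair this. The $s$-triangle inequality together with $\norm{\cdot}_{s}\le\norm{\cdot}_{2}$ only shows that $\norm{\Phi_t^{(m)}(Z_m)}_{p}$ and $\norm{R_t(s_m)}_{p}$ have the same ultralimit. Identifying that limit with $\norm{\Phi_t(x)}_{p,\varphi}$ needs the first line of \eqref{eq:normCLT} at an exponent $p<1$ for the raw polynomial $R_t(s_m)$. That is exactly what is unavailable: Lee--Ricard and the interpolation in Lemma~\ref{lem:approximation} only give $s\ge1$. The input side $\norm{Z_m}_{r}$ with $0<r<1$ has the same problem. Case (2), which you deduce from case (1), inherits the gap.

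\textbf{What is needed.} You need a uniformly bounded positive lift in $\{e\}'$ that compresses to $\iota(x)$. The paper obtains it by truncation. For $X=P^*P+\epsilon1$ and $R>\norm X$, set $Z_m=\min(X_m,R)$, where $X_m=P(s_m)^*P(s_m)+\epsilon1$ are the raw approximants. The bound $(u-R)_+^2\le R^{2-k}u^k$ and convergence of moments give $\norm{(X_m-Z_m)D_m^{1/2}}_2\to0$. Comparison with Nou's cutoff polynomial $P(\widetilde s_m)^*P(\widetilde s_m)+\epsilon1$ then shows that $a=[Z_m]_{\mathcal U}$ commutes with $e$ and satisfies $eae=\iota(X)$. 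Finally, $\epsilon1\le Z_m\le R1$ keeps the finite-model hypotheses. Alternatively, your quasi-triangle estimate could be salvaged by comparing with the bounded cutoff polynomials $Q(\widetilde s_m)^*Q(\widetilde s_m)$ and $R_t(\widetilde s_m)$, which lie in the algebra commuting with $e$, instead of with $R_t(s_m)$.

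\textbf{Case (2).} Your duality argument inside $\Gamma_q$ is a genuinely different route, and the exponent bookkeeping $\frac{1-r'}{1-p'}=\frac{1-p}{1-r}$ is right. However, it needs near-optimizers $Y$ that are bounded elements of $\Gamma_q$, so that case (1) applies to them. The finite-dimensional optimizer has no bounded analogue in general. Near-optimizers built in the reduced algebras of Lemma~\ref{lem:hessian} do not lie in $\Gamma_q$, so you would still have to push them back and control the $L^{p'}$ quasi-norm, $0<p'<1$, under the expectation. Your sketch does not do this.

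The paper avoids infinite-dimensional duality altogether. Because the truncated lifts satisfy $\epsilon1\le Z_m\le R1$ and commute with $e$, functional calculus gives $e[Z_m^{-1}]_{\mathcal U}e=\iota(X^{-1})$, and likewise for $\Phi_t^{(m)}(Z_m)$. The negative-exponent case of Proposition~\ref{prop:finiteReverse} therefore transfers directly.

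\textbf{Unbounded $\log A$.} This case is not handled "exactly as in" Theorem~\ref{thm:cross2r}, whose discretization uses $|f_m|\le\beta<\infty$. You need the extra step of restricting to the invariant spectral subspaces for $[-R,R]$, using their vacuum-preserving expectations, and letting $R\to\infty$.
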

\begin{proof}
We use the Groh--Raynaud ultraproduct $\mathcal R$, state $\omega$, support $e$, embedding $\iota$, and maps $\mathcal Q_t$ constructed in
Section~\ref{spin:CLTsection}. 
In particular $\mathcal Q_t$ preserves $\{e\}'$, its corner map intertwines $\Phi_t$, and \eqref{eq:raynaudnorm} applies at every positive exponent.

It suffices initially to take $X=P^*P+\epsilon1$, with $P$ a finite polynomial and $\epsilon\geq0$.  
Let
$X_m=P(s_m)^*P(s_m)+\epsilon 1$ be the raw positive approximants and
choose $R>\|X\|$.  
Put $Z_m=f_R(X_m)$, where
$f_R(u)=\min(u,R)$ for $u\geq0$.  
For every integer $k>2$,
\[
 (u-R)_+^2\leq R^{2-k}u^k,\qquad u\geq0,
\]
and hence
\[
 \tau_m^{\varepsilon}((X_m-Z_m)^2)
 \leq R^{2-k}\tau_m^{\varepsilon}(X_m^k).
\]
All these moments converge.
Taking the ultralimit and then $k\to\infty$ gives
\[
 \lim_{m,\mathcal U}\|(X_m-Z_m)D_m^{1/2}\|_2=0,
\]
since $R^{2-k}\varphi(X^k)\leq
R^2(\|X\|/R)^k\to0$.

Write
\[
\widetilde s_{i,m}=h_C(\Re s_{i,m})+i\,h_C(\Im s_{i,m}),
\qquad
h_C(u)=u\,\mathbf{1}_{(-C,C)}(u),
\]
with \(C\) greater than the norms of the limiting self-adjoint coordinates.
Comparing $X_m$ also with the uniformly bounded positive polynomial $Y_m=P(\widetilde{s}_m)^*P(\widetilde{s}_m)+\epsilon 1$ in Nou's cutoff Gaussian generators, and applying the proof of \cite[Lemma 4.3]{LR},
we obtain that $\displaystyle \lim_{m,\mathcal U}\|(Y_m-Z_m)D_m^{1/2}\|_2=0$.
Thus $a=[Z_m]_{\mathcal U}$ has the same support compression as that bounded polynomial lift, and commutes with $e$: 
their self-adjoint difference $w$ satisfies $\omega(w^2)=0$, hence $we=ew=0$.
It follows that $eae=\iota(X)$.  
Moreover
$0\leq Z_m\leq R1$, and $Z_m\geq \epsilon 1$.
For $0<p<r<1$, apply \eqref{eq:raynaudnorm} directly to $Z_m$ and $\Phi_t^{(m)}(Z_m)$ and pass Proposition \ref{prop:finiteReverse} to the ultralimit.
This also applies when $\epsilon=0$.


For $\epsilon>0$, the preceding lifts satisfy
\[
 \epsilon1\leq Z_m\leq R 1,
 \qquad
 \epsilon1\leq \Phi_t^{(m)}(Z_m)\leq R 1.
\]
Because $e$ commutes with both $a=[Z_m]_{\mathcal U}$ and $\mathcal Q_t(a)$,
continuous functional calculus on $[\epsilon,R]$ gives exactly
\[
 e[Z_m^{-1}]_{\mathcal U} e=\iota(X^{-1}),
 \qquad
 e[(\Phi_t^{(m)}(Z_m))^{-1}]_{\mathcal U}e
          =\iota((\Phi_t(X))^{-1}).
\]
The inverses are uniformly bounded by $\epsilon^{-1}$.
By Equation \eqref{eq:raynaudnorm} and Proposition \ref{prop:finiteReverse}, we have that
\[
 \|\Phi_t(X)\|_{p,\varphi}
 =\lim_{m,\mathcal U}\|\Phi_t^{(m)}(Z_m)\|_{p,\tau_m^{\varepsilon}}
 \ge\lim_{m,\mathcal U}\|Z_m\|_{r,\tau_m^{\varepsilon}}
 =\|X\|_{r,\varphi},
\]
for $-\infty<p<r<0$ and $0<p<r<1$.

Finally, for the positive range $0<p<r<1$ and $x\geq 0$, we choose uniformly bounded polynomials $P_j\to x^{1/2}$ strongly$^*$ and put $x_j=P_j^*P_j$.
For $x\ge \epsilon 1$ in the negative range $-\infty<p<r<0$, instead choose uniformly bounded $P_j\to(x-\epsilon 1)^{1/2}$ strongly$^*$ and set $x_j=P_j^*P_j+\epsilon 1$. 
Then $x_j\ge \epsilon 1$ and $x_j\to x$ strongly$^*$.  
Normality of the semigroup and the Schwarz inequality imply $\Phi_t(x_j)\to \Phi_t(x)$ strongly$*$.  
Bounded strong-$*$ convergence implies convergence in every symmetric weighted $L^s(\cM)$ for $0<s<\infty$; 
for $0<s<2$, we use the generalized H\"older inequality with respect to $\varphi$ and for $s\geq2$ use interpolation between $L^2(\cM)$ and $\cM$.  
Inversion is strong-$*$ continuous on sets uniformly bounded below by $\epsilon 1$.  
This extends the inequalities to exactly the bounded inputs stated in Theorem~\ref{thm:reverse}.

For infinitely many almost-periodic modes use the increasing
expected finite-mode algebras.  In the general representation use
Nou's spectral discretization and the same expected-embedding
argument; degree is preserved, so the number semigroups intertwine.
If $\log A$ is unbounded, first restrict to the real invariant
spectral subspaces for $[-R,R]$.  Their second quantizations are
state-preserving expectations, the corresponding bounded inputs
converge strongly as $R\to\infty$, and the constants above are
independent of $R$.

\end{proof}

\begin{remark}
The same universal reverse hypercontractive time curve does not extend to all \(p<0<r<1\).
Consider a one-mode almost-periodic $q$-Araki–Woods representation with modular eigenvalue $\lambda= 4$ and let $g$ be its generalized circular generator with $D_\varphi g =\lambda g D_\varphi$.
Let 
\begin{align*}
    x_\epsilon =1+\epsilon (g+g^*).
\end{align*}
For $\epsilon$ small enough and $x_{\epsilon}>0$, we have that $\Phi_t(x_{\epsilon})=1+\epsilon e^{-t}(g+g^*)$.
The second-order expansions are
\begin{align*}
    \|x_{\epsilon}\|_{1/2, \varphi}=&1-\frac{\lambda}{1+\lambda} \varphi(g^*g) \epsilon^2 +O(\epsilon^3), \\
    \|\Phi_t(x_\epsilon)\|_{-1, \varphi} = & 1-(1+\lambda) \varphi(g^*g) e^{-2t}\epsilon^2 +O(\epsilon^3).
\end{align*}
Then the reverse hypercontractivity from the input exponent $r=\frac{1}{2}$ to the output exponent $p=-1$ implies that 
\begin{align*}
    e^{-2t} \leq \frac{\lambda}{(1+\lambda)^2}.
\end{align*}
By taking $\lambda=4$ and $t=\log 2$, we have that
\begin{align*}
    \|\Phi_{\log 2} (x_\epsilon)\|_{-1, \varphi} =1-\frac{5}{4}\varphi(g^*g)\epsilon^2 +O(\epsilon^3)< 1-\frac{4}{5} \varphi(g^*g)\epsilon^2+O(\epsilon^3) =\|x_\epsilon\|_{1/2, \varphi}.
\end{align*}
This gives a contradiction.
\end{remark}

\begin{remark}\label{rem:crosszero}
The same reverse curve does not extend to all $p<0<r<1$ even for a qubit.
Let $\sigma=\operatorname{diag}(a,b)$, $a>b>0$, $a+b=1$, and
\[
 Q=\begin{pmatrix}0&1\\1&0\end{pmatrix},\qquad
 X_\varepsilon=I+\varepsilon Q,\qquad 0<|\varepsilon|<1.
\]
For the thermal qubit semigroup, $\Theta_tX_\varepsilon=I+e^{-t}\varepsilon Q$. Direct inversion gives
\begin{equation}\label{eq:minus1exact}
 \norm{\Theta_tX_\varepsilon}_{-1,\sigma}
 =\bigl(\Tr(\sigma(\Theta_tX_\varepsilon)^{-1})\bigr)^{-1}
 =1-e^{-2t}\varepsilon^2.
\end{equation}
For $s=1/2$, use $\bigl(\Tr\sqrt H\bigr)^2=\Tr H+2\sqrt{\det H}$ for a positive $2\times2$ matrix. It gives
\begin{equation}\label{eq:halfexact}
 \norm{X_\varepsilon}_{1/2,\sigma}
 =a^2+b^2+2ab\sqrt{1-\varepsilon^2}
 =1-ab\varepsilon^2+O(\varepsilon^4).
\end{equation}
Thus a reverse estimate from $r=1/2$ to $p=-1$ requires $e^{-2t}\le ab$.

Take $a=4/5$, $b=1/5$. The proposed universal classical time would be
$t=\frac12\log((1-(-1))/(1-1/2))=\log2$. But at that time
\begin{align*}
 \norm{\Theta_{\log2}X_\varepsilon}_{-1,\sigma}
 &=1-\frac14\varepsilon^2,\\
 \norm{X_\varepsilon}_{1/2,\sigma}
 &=\frac{17}{25}+\frac8{25}\sqrt{1-\varepsilon^2}
 =1-\frac4{25}\varepsilon^2+O(\varepsilon^4).
\end{align*}
The first expression is strictly smaller for sufficiently small nonzero $\varepsilon$, whereas reverse hypercontractivity would require it to be larger. 
\end{remark}

\begin{remark}
The crossing-zero range in the reverse hypercontractivity statement of \cite[Theorem 11 and Corollary 17]{BDR}, is not valid for the symmetric weighted norms used here. 
The obstruction is the possible difference between their one-sided limits at exponent zero. 
Our argument uses the Stroock–Varopoulos comparison only on positive exponents and treats negative exponents by duality.
\end{remark}

\section*{Declaration of generative AI use}

During the preparation of this work, the authors used generative AI tools to improve the language and readability of the manuscript and to assist with developing and checking mathematical arguments.
The authors reviewed and revised the AI-assisted content and verified the mathematical statements and proofs.
The authors take full responsibility for the accuracy and integrity of the manuscript.

\bibliographystyle{plain}
\bibliography{araki}
\end{document}